\newcommand\NoBlackBoxes{\global\overfullrule0pt}
\newcommand{\eps}{\varepsilon}
\newcommand{\N}{\mathbb{N}}
\newcommand{\Z}{\mathbb{Z}}
\renewcommand{\P}{\mathbb{P}}
\newcommand{\Cov}{\mathop{\mathrm{Cov}}\nolimits}
\newcommand{\eee}{{\rm e}}
\newcommand{\dd}{{\rm d}}
\let\serieslogo@\relax
\let\@setcopyright\relax
\theoremstyle{plain}
\newtheorem{theorem}{Theorem}[section]
\newtheorem{lemma}[theorem]{Lemma}
\newtheorem{corollary}[theorem]{Corollary}
\newtheorem{proposition}[theorem]{Proposition}
\theoremstyle{definition}
\theoremstyle{remark}
\renewcommand{\P}{{\mathbb{P}}}
\newcommand{\E}{{\mathbb{E}}}
\newcommand{\R}{{\mathbb{R}}}
\newcommand{\C}{\mathbb{C}}
\newcommand{\V}{\mathbb{V}}
\renewcommand{\epsilon}{\varepsilon}
\renewcommand{\phi}{\varphi}
\numberwithin{equation}{section}
\begin{document}

\setcounter{page}{1}

\title[Fluctuations for Ising models on Erd\H{o}s-R\'enyi graphs]{Fluctuations of the Magnetization for Ising models on Erd\H{o}s-R\'enyi random graphs -- the regimes of small $\boldsymbol{p}$ and the critical temperature}

\author[Zakhar Kabluchko]{Zakhar Kabluchko}
\address[Zakhar Kabluchko]{Fachbereich Mathematik und Informatik,
Universit\"at M\"unster,
Einsteinstra\ss e 62,
48149 M\"unster,
Germany}

\email[Zakhar Kabluchko]{zakhar.kabluchko@uni-muenster.de}

\author[Matthias L\"owe]{Matthias L\"owe}
\address[Matthias L\"owe]{Fachbereich Mathematik und Informatik,
Universit\"at M\"unster,
Einsteinstra\ss e 62,
48149 M\"unster,
Germany}

\email[Matthias L\"owe]{maloewe@math.uni-muenster.de}

\author[Kristina Schubert]{Kristina Schubert}
\address[Kristina Schubert]{ Fakult\"at f\"ur Mathematik, TU Dortmund, Vogelpothsweg 87, 44227 Dortmund,
Germany}

\email[Kristina Schubert]{kristina.schubert@tu-dortmund.de}


\date{\today}

\subjclass[2000]{Primary: 82B44; Secondary: 82B20}

\keywords{Ising model, dilute Curie-Weiss model, fluctuations, Central Limit Theorem, random graphs}

\newcommand{\wlim}{\mathop{\hbox{\rm w-lim}}}
\newcommand{\na}{{\mathbb N}}
\newcommand{\re}{{\mathbb R}}

\newcommand{\vep}{\varepsilon}

\begin{abstract}
We continue our analysis of Ising models on the (directed) Erd\H{o}s-R\'enyi random graph. This graph is constructed on $N$ vertices and every edge has probability $p$ to be present.
These models were introduced by Bovier and Gayrard [\emph{J.\ Stat.\ Phys.}, 1993] and analyzed by the authors in a previous note, in which we consider the case of $p=p(N)$ satisfying $p^3N^2\to +\infty$ and $\beta <1$. 
In the current note we prove a quenched Central Limit Theorem for the magnetization  for $p$ satisfying $pN \to \infty$ in the high-temperature regime $\beta<1$. 

We also show a non-standard Central Limit Theorem for $p^4N^3 \to \infty$ at the critical temperature $\beta=1$. For $p^4N^3 \to 0$ we obtain a Gaussian limiting distribution for the magnetization. Finally, on the critical line $p^4N^3 \to c$ the limiting distribution for the magnetization contains a quadratic component as well as a $x^4$-term. Hence, at $\beta=1$ we observe a phase transition in $p$ for the fluctuations of the magnetization. 
\end{abstract}

\maketitle

\section{Introduction and main results}
\subsection{Description of the model}
In this paper we continue our investigation of Ising models on the Erd\H{o}s-R\'enyi random graph. They fall into the category of disordered ferromagnets, see e.g.~\cite{froehlichlecture} for a classic survey and \cite{georgii_dilute} for first mathematically rigorous results. The model we are investigating in the present note was introduced by Bovier and Gayrard in \cite{BG93b}. In the same article the authors also prove a law of large number type result, which we will describe later.

The general ``architecture'' of the model is that of a realization of a directed Erd\H{o}s-R\'enyi graph $G=G(N,p)$. This means that on the vertex set $\{1, \ldots, N\}$ the directed edge $(i,j)$ is realized with probability $p\in (0,1]$. Note that the case $i=j$ is allowed, so the graph $G$ may have loops. The indicator random variables $\vep_{i,j}, i,j \in \{1, \ldots, N\}$, which indicate whether an edge $(i,j)$ is present or not, are assumed to be independent. By definition, $(\vep_{i,j})_{i,j=1}^{N}$ are thus independent random variables with
$$
\P[\vep_{i,j} = 1] = p, \qquad \P[\vep_{i,j} = 0] = 1- p.
$$
One major difference of this note from our previous article \cite{KaLS19} is that here we only assume that $p=p(N)$ and $N$ satisfy $pN \to \infty$ as $N\to\infty$. Note that this even allows for $p$ smaller than $\log(N)/N$, which is the threshold for $G$ being connected asymptotically almost surely. This is to be contrasted to the situation in \cite{KaLS19}, where we had to assume that $p^3N^2 \to \infty$. Another important difference is that we are also able to prove a (non-standard) Central Limit Theorem at the critical inverse temperature $\beta=1$ in this note.

Returning to the definition of our model, the Hamiltonian or energy function of the Ising model on $G$ (i.e.~a fixed realization of the Erd\H{o}s-R\'enyi random graph) is a function $H:= H_N: \{-1,+1\}^N \to\R$. This function is given by
\begin{equation}\label{hamilCW}
H(\sigma) = - \frac 1 {2Np} \sum_{i,j=1}^N \vep_{i,j} \sigma_i \sigma_j
\end{equation}
for $\sigma = (\sigma_1,\ldots,\sigma_N) \in \{-1,+1\}^N$.
With such an energy function $H$ we may associate a Gibbs measure on $\{-1,+1\}^N$. This is a random probability measure with respect to the randomness encoded by the $(\vep_{i,j})_{i,j=1}^N$. It is given by
\begin{equation}\label{gibbs}
\mu_\beta (\sigma):=\frac 1 {Z_{N}(\beta)} \exp(-\beta H(\sigma)), \qquad \sigma \in \{-1,+1\}^N,
\end{equation}
where $\beta \ge 0$ is called the inverse temperature. The normalizing constant is given by
\begin{equation}\label{partition}
Z_N(\beta):= \sum_{\sigma \in \{-1,+1\}^N} \exp(-\beta H(\sigma)).
\end{equation}
The well known Curie-Weiss model is the model with $p\equiv 1$ (of course then $H$ and $\mu_\beta$ are no longer random). It has been intensively studied in the past, see  \cite{Ellis-EntropyLargeDeviationsAndStatisticalMechanics}, for a survey. One of the first findings is that the Curie-Weiss model undergoes a phase transition at $\beta=1$. This can be seen, among others, when considering the magnetization per particle
\begin{equation}\label{eq:magnet}
m_N(\sigma)= \frac{\sum_{i=1}^N \sigma_i}{N}= \frac{|\sigma|}{N}.
\end{equation}
Here we have set
\begin{equation}\label{eq:sigma}
|\sigma|:= N m_N(\sigma)= \sum_{i=1}^N \sigma_i.
\end{equation}
In the standard Curie-Weiss model the law of $m_N$ under the Gibbs measure converges to
$$
\frac 12 (\delta_{m^+(\beta)}+\delta_{m^-(\beta)}),
$$
where $\delta_x$ is the Dirac-measure in a point $x$, $m^+(\beta)$ is the largest solution of
\begin{equation}\label{eq:CWeq}
z= \tanh(\beta z),
\end{equation}
and $m^-(\beta)=-m^+(\beta)$.
Since for $\beta\le 1$ the equation \eqref{eq:CWeq} has only the solution $z=0$, in this so-called high temperature regime $m_N$ converges to $0$ in probability. For $\beta>1$ the largest solution of \eqref{eq:CWeq} is strictly positive. Hence the magnetization $m_N$ is asymptotically concentrated in two values, a positive one and a negative one.
As shown by Bovier and Gayrard in \cite{BG93b}, the same holds true for the dilute Curie-Weiss Ising model defined by \eqref{hamilCW} and \eqref{gibbs} if $p N \to \infty$. For $\beta \le 1$ the magnetization $m_N$ converges to $0$ in probability under the Gibbs measure for almost all realizations of the random graph. For $\beta >1$ the distribution of the magnetization again converges to $\frac 12 (\delta_{m^+(\beta)}+\delta_{m^-(\beta)})$, where $m^+(\beta)$ and $m^-(\beta)$ are defined as above.

Indeed, there is also a Central Limit Theorem for  the  magnetization in the Curie-Weiss model
(see, e.g.~\cite{Ellis_Newman_78b}, \cite{Ellis-EntropyLargeDeviationsAndStatisticalMechanics}, \cite{EL10}, \cite{Chatterjee_Shao}) when $\beta <1$. In this case $\sqrt N m_N$ converges in distribution to a normal random variable with mean $0$ and variance $\frac 1 {1-\beta}$. Moreover, at $\beta=1$, there is no such standard Central Limit Theorem and one has to scale in a different way. The result is that $\sqrt[4] N m_N$ converges in distribution to a non-normal random variable with density proportional to $\exp(-\frac 1 {12} x^4)$ with respect to the Lebesegue measure.

Inspired by the first of these results, in \cite{KaLS19} we showed the following:
Denote by $L_N$ the following {\it random} element of the space of probability measures on $\R$, denoted by $\mathcal M(\R)$:
\begin{equation}\label{eq:def_L_N}
L_N := \frac 1{Z_N(\beta)} \sum_{\sigma\in \{-1,+1\}^N} \eee^{-\beta H(\sigma)}  \delta_{\frac 1 {\sqrt N} \sum_{i=1}^N \sigma_i}.
\end{equation}
(Note that $L_N$ is random, because it depends on the random variables $\vep_{i,j}, i,j \in \{1, \ldots, N\}$.)
Then, if $p^3 N^2 \to \infty$, we showed that $L_N$ converges in probability to the normal distribution with mean 0 and variance $\frac{1}{1-\beta}$, denoted by $\mathfrak N_{0, 1/(1-\beta)}$. Here, we consider $L_N$ as a random variable with values in the space $\mathcal M(\R)$ endowed with some metric generating the weak topology, and $\mathfrak N_{0, 1/(1-\beta)}$ as a deterministic element of the same space.

Note that the situation we analyzed in \cite{KaLS19} is essentially different from the situation, when the topology of the graph is locally tree like, as is the case e.g.\  for sparse Erd\H{o}s-R\'enyi graphs or in some other models of random graphs. The corresponding situation was analyzed, e.g.\ in \cite{Dembo_Montanari_2010a} and \cite{Dembo_Montanari_2010b} as well as  in \cite{van_der_Hofstad_et_al_2010},
\cite{van_der_Hofstad_et_al_2014}, \cite{van_der_Hofstad_et_al_2015}, \cite{van_der_Hofstad_et_al_2015b}, and \cite{van_der_Hofstad_et_al_2015c}.

Comparing our results in \cite{KaLS19} with the fluctuation results for the Curie-Weiss model cited above, immediately raises two questions: First, is the restriction $N^2 p^3~\to~\infty$ necessary for the above statement to hold? And second, can we say anything about the fluctuations of the magnetization at $\beta=1$? For both these questions our techniques in \cite{KaLS19} were insufficient. In a nutshell, the key idea there was to consider
the following generalization of the partition function:
\begin{equation}\label{ZN(g)}
Z_N(\beta, g):= \sum_{\sigma \in \{-1, +1\}^N} \eee^{-\beta H(\sigma)} g\left( \frac{\sum_{i=1}^N \sigma_i}{\sqrt N} \right).
\end{equation}
Here we took $g \in \mathcal{C}_b(\R)$ to be a continuous, bounded function on $\R$. Note that
$Z_N(\beta, g)$ is a generalization of the partition function since for $g \equiv 1$ we obtain $Z_N(\beta)= Z_N(\beta, 1)$, i.e.\  the partition function defined in~\eqref{partition}.
Moreover, we see that
\begin{equation}
\E_{\mu_\beta }\left[ g\left( \frac{\sum_{i=1}^N \sigma_i}{\sqrt N} \right)\right]= \frac{Z_N(\beta, g)}{Z_N(\beta)},
\end{equation}
where, for a fixed disorder $(\eps_{i,j})_{i,j=1}^N$,  $\E_{\mu_\beta}$ denotes the expectation with respect to the Gibbs measure $\mu_\beta$.

In \cite{KaLS19} we were able to show that for $g\ge 0$, $g \not\equiv 0$, $\beta<1$ and $N^2p^3\to \infty$ the variance of $Z_N(\beta, g)$, i.e.\, $\V(Z_N(\beta, g)$, is $o(\E(Z_N(\beta, g))^2)$. Since the set
$\mathcal{C}_b(\R) \cap \{g: g\ge 0, g \not\equiv 0\}$ is convergence determining for weak convergence, this was enough to prove our main result. However, this approach essentially needs the conditions $\beta<1$ and $N^2p^3\to \infty$.
The objective of the present note is to analyze the fluctuations of the magnetization, when either $N^2 p^3$ does not tend to infinity, or $\beta=1$.

\subsection{Main results}
The central results of this note concern the quantity $L_N$ defined in \eqref{eq:def_L_N} and two related quantities, which we will call $L_N^1$ and $L_N^3$ because they appear in the first and the third case of Theorem~\ref{theo:crit_temp_clt}, respectively. They are defined by
\begin{equation}\label{eq:def_TL_N}
L_N^1 := \frac 1{Z_N(\beta)} \sum_{\sigma\in \{-1,+1\}^N} \eee^{-\beta H(\sigma)}  \delta_{\frac 1 {N^{3/4}} \sum_{i=1}^N \sigma_i}
\end{equation}
and
\begin{equation}\label{eq:def_OL_N}
L_N^3 := \frac 1{Z_N(\beta)} \sum_{\sigma\in \{-1,+1\}^N} \eee^{-\beta H(\sigma)}  \delta_{\frac 1 {\sqrt{N^3p^2}} \sum_{i=1}^N \sigma_i}.
\end{equation}

We will analyze the convergence of $L_N$ in the high temperature regime $\beta<1$ and the convergence of $L_N^1$ and $L_N^3$ at the critical temperature. To this end we will furnish the set $\mathcal M(\R)$  of probability measures on $\R$ with the topology of weak convergence. Recall that this weak topology can be metrized by the L\'evy metric
$$
d_{L}(\mu_1,\mu_2) = \inf\{\eps>0\colon \mu_1(-\infty,t-\eps]-\eps \leq \mu_2(-\infty,t] \leq \mu_1(-\infty,t+\eps]+\eps\}.
$$
Endowed with this metric, $\mathcal M(\R)$ becomes a complete, separable metric space.

Our first result is an extension of Theorem 1.1 in \cite{KaLS19} to the regime of smaller values of $p$ such that $pN \to \infty$.

\begin{theorem}\label{theo:high_temp_clt}
Assume that $0<\beta<1$ and let $p=p(N)$ be such that $p N \to\infty$ as $N\to\infty$.
Then, $L_N$, considered as a random element of $\mathcal M(\R)$, converges in probability to $\mathfrak N_{0, 1/(1-\beta)}$.
That is to say, for every $\eps>0$,
$$
\lim_{N\to\infty} \P[d_{L}(L_N, \mathfrak N_{0, 1/(1-\beta)})>\eps] = 0.
$$
\end{theorem}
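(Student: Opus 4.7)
The plan is to reduce convergence of $L_N$ in the L\'evy metric to convergence in probability of
$\int g\, \dd L_N = Z_N(\beta,g)/Z_N(\beta)$ to $\int g\, \dd \mathfrak N_{0,1/(1-\beta)}$
for each $g$ in a countable convergence-determining family of bounded continuous test functions. The key structural observation is the decomposition
$$-\beta H(\sigma) = \frac{\beta}{2N}|\sigma|^2 + R(\sigma),\qquad R(\sigma) := \frac{\beta}{2Np}\sum_{i,j=1}^N(\eps_{i,j}-p)\sigma_i\sigma_j,$$
which splits the Hamiltonian into the deterministic Curie--Weiss interaction (which on its own yields the target Gaussian limit by the classical Ellis--Newman analysis) plus a mean-zero random quadratic form in the disorder.

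To decouple the Curie--Weiss interaction I would apply the Hubbard--Stratonovich identity
$$Z_N(\beta,g) = \sqrt{\frac{N}{2\pi\beta}}\int_\R e^{-Ny^2/(2\beta)}\sum_\sigma g\!\left(\frac{|\sigma|}{\sqrt N}\right)e^{y|\sigma|}e^{R(\sigma)}\,\dd y,$$
rescale $y=x/\sqrt N$, and perform a saddle-point analysis: for $\beta<1$ the non-random part is Gaussian around the unique saddle $x=0$ with the correct limiting variance $1/(1-\beta)$. What remains is to show that the random factor $e^{R(\sigma)}$ does not alter this picture.

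At the annealed level the cancellation is essentially automatic. By independence of the $\eps_{i,j}$,
$$\E\bigl[e^{R(\sigma)}\bigr] = \prod_{i,j}\E\!\left[\exp\!\left(\frac{\beta(\eps_{i,j}-p)\sigma_i\sigma_j}{2Np}\right)\right] = \exp\!\left(\frac{\beta^2(1-p)}{8p}+o(1)\right),$$
where a Taylor expansion of each factor (valid because $pN\to\infty$ forces the exponents to be uniformly small) shows the leading constant is $\sigma$-independent and therefore cancels between the numerator and denominator of $Z_N(\beta,g)/Z_N(\beta)$. The entire difficulty lies in upgrading this annealed statement to convergence in probability.

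The main obstacle is precisely the point at which the second-moment approach of \cite{KaLS19} breaks down: the pointwise variance $\Var(R(\sigma)) = \beta^2(1-p)/(4p)$ blows up as $p\to 0$, so $\V(Z_N(\beta,g))/\E[Z_N(\beta,g)]^2$ cannot be bounded without assuming $p^3N^2\to\infty$. The covariance structure, however, is favorable: a direct computation gives
$\Cov(R(\sigma),R(\tau)) = \tfrac{\beta^2(1-p)}{4N^2p}(\sigma\cdot\tau)^2$, and typical pairs $\sigma,\tau$ under the (product) Curie--Weiss measure have $(\sigma\cdot\tau)^2 = O(N)$, giving a typical pairwise covariance of only $O(1/(Np))$, which \emph{does} vanish under the weaker hypothesis $pN\to\infty$. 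To exploit this I would either perform a Lindeberg-style edge-by-edge replacement of each $\eps_{i,j}$ by its mean $p$ --- controlling the increment through the off-diagonal Gibbs correlations which decay at high temperature --- or truncate the sum over $\sigma$ to the typical set $\{|\sigma|\le C\sqrt N\log N\}$ and carry out a refined second-moment bound where the overlap distribution under the Curie--Weiss measure supplies the extra decay. The hard part is bookkeeping: each individual edge contributes $O(1/(Np))$ to $\log Z_N$, and the naive $\ell^2$-summation produces an error of order $1/p$, so the high-temperature decorrelation from $\beta<1$ must be used quantitatively to recover a genuine $o(1)$ error.
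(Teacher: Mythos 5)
Your decomposition $-\beta H(\sigma) = \frac{\beta}{2N}|\sigma|^2 + R(\sigma)$ and the annealed computation $\E e^{R(\sigma)} = \exp(\beta^2(1-p)/(8p)+o(1))$ are correct, and you have correctly located the high-level obstacle: the second moment method of \cite{KaLS19} applied directly to $Z_N(\beta,g)$ fails once $N^2p^3\not\to\infty$. You also correctly observe that $\Cov(R(\sigma),R(\tau))$ scales like $|\sigma\tau|^2/(N^2p)$ and is therefore small for pairs with small overlap. But the proposal stops short of the single device that actually makes the proof go through, and the alternatives you sketch would not close the gap.

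The crux is the following. Dividing $Z_N(\beta,g)$ (or $e^{R(\sigma)}$) by the \emph{deterministic} annealed constant $\E e^{R(\sigma)}$ changes nothing in the ratio $\V Z_N/(\E Z_N)^2$, because a deterministic normalization rescales numerator and denominator identically. If you nevertheless compute $\E[e^{R(\sigma)+R(\tau)}]/(\E e^{R(\sigma)}\E e^{R(\tau)})$ carefully (one can organize the per-edge expansion in exactly the $a_0,a_1,b_0,b_1,b_{12}$ form used in Lemma \ref{lem:exp_tanh}), the $\sigma$- and $\tau$-\emph{independent} part of the log-ratio is
$N^2\bigl(\tfrac14\log(1+4p(1-p)\sinh^2\gamma)-\log(1+4p(1-p)\sinh^2(\gamma/2))\bigr) = \tfrac{p(1-p)N^2\gamma^4}{4}+\cdots = \Theta\bigl((N^2p^3)^{-1}\bigr)$,
which does not vanish under $Np\to\infty$ alone. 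In other words, even for two orthogonal typical configurations $\sigma\perp\tau$, the covariance $\Cov(e^{R(\sigma)},e^{R(\tau)})$ is not $o(\E e^{R(\sigma)}\E e^{R(\tau)})$ once $N^2p^3\not\to\infty$. Truncating over $\sigma$ to a typical set does not help with this, since the problematic term is configuration-independent. The culprit is the common random prefactor $W^{-1}:=\exp\bigl(\log\cosh(\gamma)\sum_{i,j}\eps_{i,j}\bigr)$ sitting in front of every $e^{-\beta H(\sigma)}$; its own multiplicative fluctuations are of order $\exp(\Theta(1/\sqrt{N^2p^3}))$ and dominate the variance.

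What the paper does, and what your proposal is missing, is to renormalize by this \emph{disorder-dependent} (not deterministic) factor: set $T(\sigma):=e^{-\beta H(\sigma)}\,W=\prod_{i,j}(1+\eps_{i,j}\tanh(\gamma)\sigma_i\sigma_j)$ (see \eqref{eq:T}). Since $W$ is $\sigma$-independent, the ratio of interest is unchanged, $Z_N(\beta,g)/Z_N(\beta)=\tilde Z_N(\beta,g)/\tilde Z_N(\beta,1)$, but now the configuration-independent part of $\log\bigl(\E T(\sigma)T(\tau)/\E T(\sigma)\E T(\tau)\bigr)$ drops to $O(N^2p^2\gamma^4)=O((N^2p^2)^{-1})=o(1)$ under $Np\to\infty$, i.e.\ the extra factor of $p$ gained by the renormalization is precisely the improvement from $N^2p^3\to\infty$ to $Np\to\infty$. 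With $T$ in place, the second moment method together with a typical-set restriction on all three of $|\sigma|^2$, $|\tau|^2$, $|\sigma\tau|^2$ and a three-dimensional local limit theorem finishes the proof, exactly as in Propositions \ref{prop:expect} and \ref{prop:var}. Your Lindeberg sketch might conceivably be made to work, but as you note yourself the naive error budget is of size $1/p$, you do not identify the compensating mechanism quantitatively, and it is a genuinely different (and harder-looking) route than the one that succeeds here. As written, the proof proposal does not contain the key idea of the paper's argument.
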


As can be expected from the results in the Curie-Weiss model and from the fact that at $\beta=1$ the variance of the normal distribution in Theorem \ref{theo:high_temp_clt} explodes, in the critical case $\beta=1$ one has to scale differently. Interestingly, besides this phase transition in $\beta$, we will also find that there is a phase transition in $p$.

\begin{theorem}\label{theo:crit_temp_clt}
\begin{enumerate}
\item
Assume that $\beta=1$ and let $p=p(N)$ be such that $p^4 N^3 \to\infty$ as $N\to\infty$.
Then, $L_N^1$, considered as a random element of $\mathcal M(\R)$, converges in probability to $\mathfrak{M}$.
Here $\mathfrak{M} \in \mathcal M(\R)$ is the probability measure with density
\begin{equation}
\psi(x):=\frac{\eee^{-\frac 1 {12} x^4}}{\int_\R \eee^{-\frac 1 {12} y^4} dy}
\end{equation}
with respect to Lebesgue measure.
The convergence in probability  means that for every $\eps>0$,
$$
\lim_{N\to\infty} \P[d_{L}(L_N^1, \mathfrak M)>\eps] = 0.
$$
\item If $\beta=1$ and $p=p(N)$ is such that $Np \to \infty$, but $p^4 N^3 \to \frac {1}{c^2} \in (0, \infty)$ as $N\to\infty$, then $L_N^1$, considered as a random element of $\mathcal M(\R)$, converges in probability to $\mathfrak{L}_{c}$.
Here $\mathfrak{L}_c \in \mathcal M(\R)$ is the probability measure with  density
\begin{equation}
\psi_c(x):=\frac{\eee^{-c \frac{x^2}{24}-\frac {x^4} {12} }}{\int_\R \eee^{-c \frac{y^2}{24}-\frac {y^4} {12}} dy}
\end{equation}
with respect to Lebesgue measure.
As above, the convergence in probability means that for every $\eps>0$,
$$
\lim_{N\to\infty} \P[d_{L}(L_N^1, \mathfrak{L}_{c})>\eps] = 0.
$$
\item
If again $\beta=1$ and $p=p(N)$ is such that $Np \to \infty$, but $p^4 N^3 \to 0$ as $N\to\infty$, then
$L_N^3$, considered as a random element of $\mathcal M(\R)$, converges in probability to $\mathfrak{N}_{0,12}$.
Again, this is to say
hat for every $\eps>0$,
$$
\lim_{N\to\infty} \P[d_{L}(L_N^3, \mathfrak{N}_{0,12})>\eps] = 0,
$$
and $\mathfrak{N}_{0,12}\in \mathcal M(\R)$ denotes the centered normal distribution with variance~$12$.
\end{enumerate}
\end{theorem}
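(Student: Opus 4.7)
The plan for all three parts is to analyze the ratio $Z_N(\beta, g)/Z_N(\beta)$ via the generalized-partition-function strategy of \cite{KaLS19}, adapted to the critical scale. Fix $g \in \mathcal{C}_b(\R)$, set $s_N := N^{3/4}$ for parts (1)--(2) and $s_N := \sqrt{N^3 p^2}$ for part (3), and put
\[
Z_N(\beta, g) := \sum_{\sigma \in \{-1,+1\}^N} \eee^{-\beta H(\sigma)}\, g\!\left(\tfrac{|\sigma|}{s_N}\right).
\]
Since the bounded continuous functions are convergence-determining for the weak topology metrized by $d_L$, it suffices to show $Z_N(\beta,g)/Z_N(\beta) \toprobab \int g\, d\mu_\infty$ for every such $g$, where $\mu_\infty \in \{\mathfrak{M}, \mathfrak{L}_c, \mathfrak{N}_{0,12}\}$ is the claimed limit in each case. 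I would attack this in two steps: (A) compute $\E[Z_N(\beta,g)]/\E[Z_N(\beta,1)]$ and identify it with $\int g\, d\mu_\infty$; (B) bound $\V(Z_N(\beta,g)) = o(\E[Z_N(\beta,g)]^2)$, so that Chebyshev promotes the annealed identity to convergence in probability.

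For (A), the independence of the $\vep_{i,j}$ together with a cumulant expansion (using $(\sigma_i\sigma_j)^m = \sigma_i\sigma_j$ when $m$ is odd and $=1$ when $m$ is even) gives
\[
\log \E[\eee^{-\beta H(\sigma)}] = \alpha_N |\sigma|^2 + C_N, \quad \alpha_N := \sum_{m\text{ odd}} \frac{\kappa_m}{m!}\Bigl(\frac{\beta}{2Np}\Bigr)^m,
\]
where $\kappa_m$ is the $m$-th cumulant of a Bernoulli$(p)$ variable. At $\beta=1$ the leading term $1/(2N)$ of $\alpha_N$ cancels \emph{exactly} the quadratic piece of the Stirling expansion
\[
\binom{N}{(N+k)/2} \sim 2^N \sqrt{\tfrac{2}{\pi N}}\, \exp\!\Bigl(-\tfrac{k^2}{2N} - \tfrac{k^4}{12 N^3} - \tfrac{k^6}{30 N^5} - \cdots\Bigr);
\]
this exact cancellation is the analytic signature of criticality. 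What remains is the quartic $-k^4/(12 N^3)$ from entropy together with a sub-leading quadratic correction to $\alpha_N$ of order $1/(N^3 p^2)$ coming from $\kappa_3 = p(1-p)(1-2p)$; the balance between these two contributions is governed precisely by $p^4 N^3$, which explains the phase transition in $p$. Rescaling $x := k/s_N$, the three regimes separate: if $p^4 N^3 \to \infty$, only the quartic survives, giving density $\propto \eee^{-x^4/12}$ and hence $\mathfrak{M}$; if $p^4 N^3 \to 1/c^2$, both terms persist at leading order, yielding $\mathfrak{L}_c$; if $p^4 N^3 \to 0$, switching to $s_N = \sqrt{N^3 p^2}$ makes $k^4/N^3 = N^3 p^4 x^4 \to 0$, so only the quadratic survives, yielding the Gaussian $\mathfrak{N}_{0,12}$.

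For (B), applying the same cumulant expansion to a pair $(\sigma,\tau)$, using the identities $(\sigma_i\sigma_j+\tau_i\tau_j)^m = 2^{m-1}(\sigma_i\sigma_j+\tau_i\tau_j)$ for odd $m$ and $2^{m-1}(1+\sigma_i\sigma_j\tau_i\tau_j)$ for even $m$, together with $\sum_{i,j}\sigma_i\sigma_j\tau_i\tau_j = r(\sigma,\tau)^2$ for the overlap $r(\sigma,\tau) := \sum_i \sigma_i\tau_i$, produces
\[
\log \E[\eee^{-\beta H(\sigma) - \beta H(\tau)}] = \tilde\alpha_N(|\sigma|^2+|\tau|^2) + \tilde\beta_N\, r(\sigma,\tau)^2 + \tilde C_N,
\]
with $\tilde\beta_N \sim 1/(4 N^2 p)$. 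After cancelling the diagonal and constant contributions against $\E[Z_N(\beta,g)]^2$, the bound $\V/\E^2 \to 0$ reduces to showing that the expectation of $\eee^{\tilde\beta_N r(\sigma,\tau)^2}$, under the pair-product measure weighted by $\eee^{\alpha_N(|\sigma|^2+|\tau|^2)} g(|\sigma|/s_N) g(|\tau|/s_N)$, tends to $1$.

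The main obstacle is controlling this overlap expectation: this is exactly where the methods of \cite{KaLS19}, valid only when $N^2 p^3 \to \infty$ and $\tilde\beta_N r^2$ is uniformly tiny, break down, and where the weaker hypothesis $Np \to \infty$ must be used optimally. Under the unweighted product measure $r(\sigma,\tau) = \sum_i \sigma_i\tau_i$ is a sum of independent $\pm 1$'s, hence of order $\sqrt{N}$ typically, so $\tilde\beta_N r^2 \sim 1/(Np) \to 0$ on a set of overwhelming probability; moreover the weights depend on $(\sigma,\tau)$ only through the marginals $|\sigma|,|\tau|$ and therefore do not tilt the overlap away from its symmetric-Bernoulli law. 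I would split the double sum according to $|r(\sigma,\tau)| \leq \sqrt{N}\log N$ versus its complement: on the good event a direct Taylor bound gives $\eee^{\tilde\beta_N r^2} = 1 + O(\log^2 N/(Np)) = 1 + o(1)$, while on the bad event a Hoeffding-type estimate shows the contribution is superpolynomially small, dominating the potential growth of $\eee^{\tilde\beta_N r^2}$. Executing this bound carefully at the critical scale $s_N$ -- where the marginal weights concentrate $|\sigma|$ on scale $s_N \geq N^{1/2}$ rather than on the Gaussian scale $\sqrt{N}$ encountered in \cite{KaLS19} -- constitutes the main technical work of the proof.
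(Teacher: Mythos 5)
You set out to control $Z_N(\beta,g)/Z_N(\beta,1)$ by showing (A) that $\E Z_N(\beta,g)/\E Z_N(\beta,1)$ converges to $\int g\, d\mu_\infty$ and (B) that $\V Z_N(\beta,g)=o((\E Z_N(\beta,g))^2)$. Both steps break down for the same structural reason: you work with the un-normalized $Z_N(\beta,g)$ rather than with the renormalized quantity $\tilde Z_N(\beta,g)=\sum_\sigma g(|\sigma|/s_N)T(\sigma)$, where $T(\sigma)=\exp\bigl(\gamma\sum_{i,j}\vep_{i,j}\sigma_i\sigma_j-\log\cosh(\gamma)\sum_{i,j}\vep_{i,j}\bigr)$. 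Because $Z_N(\beta,g)=W\,\tilde Z_N(\beta,g)$ with $W:=\exp\bigl(\log\cosh(\gamma)\sum_{i,j}\vep_{i,j}\bigr)$ a multiplicative random factor (independent of $\sigma$ but not of the disorder), and since $\Var(\log W)\asymp \gamma^4 N^2 p \asymp (N^2p^3)^{-1}$ does \emph{not} tend to $0$ unless $N^2p^3\to\infty$, the factor $W$ retains order-one relative fluctuations in a substantial part of the regime covered by the theorem (for instance $p^4N^3\to\infty$ allows $p^3N^2\to 0$). Consequently $\V Z_N(\beta,g)\gtrsim (\E Z_N(\beta,g))^2$ there, and step (B) is simply false. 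This is precisely the point flagged in the remark after Corollary~\ref{cor:cov}: without the $\log\cosh\gamma$ term the covariance of the summands is not small compared with the product of their means.

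Step (A) is also compromised, and in a way that would produce the wrong limiting density. Your cumulant expansion gives $\log\E\eee^{-\beta H(\sigma)}=\alpha_N|\sigma|^2+C_N$ with $\alpha_N=\sum_{m\ \mathrm{odd}}\frac{\kappa_m}{m!}\gamma^m=p\gamma+\frac{\kappa_3}{6}\gamma^3+\cdots$ and $\kappa_3=p(1-p)(1-2p)$, so the sub-leading coefficient of $|\sigma|^2$ is $\frac{p(1-p)(1-2p)}{6}\gamma^3\approx+\frac{p}{6}\gamma^3$. The coefficient that actually governs the limit of $Z_N(g)/Z_N(1)=\tilde Z_N(g)/\tilde Z_N(1)$ is the one in $\log\E T(\sigma)$, which (Lemma~\ref{ETcrit}) is $p\tanh(\gamma)=p\gamma-\frac{p}{3}\gamma^3+\cdots$, with sub-leading coefficient $-\frac{p}{3}\gamma^3=-\frac{1}{24N^3p^2}$. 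These disagree at exactly the order $p\gamma^3\sim (N^3p^2)^{-1}$ that produces the quadratic term in $\mathfrak{L}_c$ and $\mathfrak{N}_{0,12}$, and with opposite sign: carrying your $\alpha_N$ through the Stirling/Riemann-sum step would yield $\exp(+x^2/48)$ in place of the $\exp(-x^2/24)$ in parts (2) and (3). The reason $\E Z_N(g)/\E Z_N(1)$ is not the right target is that $W$ and $\tilde Z_N(g)$ are correlated: $\E Z_N(g)=\E[W\tilde Z_N(g)]$ is tilted towards large-$W$ realizations, so the in-probability limit of the ratio is $\E\tilde Z_N(g)/\E\tilde Z_N(1)$, not $\E Z_N(g)/\E Z_N(1)$.

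Both gaps are closed simultaneously by introducing the renormalized $\tilde Z_N$, which is the central device of the paper. The remainder of your sketch -- the observation that the leading $k^2/(2N)$ cancels against Stirling at $\beta=1$, the case split governed by $p^4N^3$, and the overlap control via a good/bad-event decomposition -- is in the right spirit (the paper's variance proof uses a three-dimensional Stirling/hypergeometric bound instead of Hoeffding and a finer decomposition into typical sets $R_N^1$, $R_N^2$), but without the renormalization the argument cannot be made to close.
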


\section{Technical preparation}
In this section we will prepare for the proof of the main Theorems \ref{theo:high_temp_clt} and \ref{theo:crit_temp_clt}. 
The study of the following function can be motivated when taking the expectation~$\E$ with respect to the randomness
given by $\vep_{i,j}, i,j \in \{1, \ldots, N\}$ of the random variable $Z_N(\beta, g)$ for some function $g \in \mathcal{C}_b(\R)$. Using the independence of the $(\vep_{i,j})_{i,j}$ we will then have to take expectations of the form
\begin{equation}\label{eq:motiv}
\E \eee^{-\beta H(\sigma)}
= \E \left[ \eee^{\gamma \sum_{i,j=1}^N \vep_{i,j} \sigma_i \sigma_j} \right]
= \prod_{i,j=1}^N \E \left[ \eee^{\gamma \vep_{i,j} \sigma_i \sigma_j}\right]
=\prod_{i,j=1}^N\left(1-p + p \eee^{\gamma \sigma_i \sigma_j}\right),
\end{equation}
where $\beta \le 1$ is some fixed value and we set $\gamma:=\frac{\beta}{2Np}$. Noting that $\gamma$ becomes small when $N$ becomes large, we are led to analyze the function $\log (1 - p + p\eee^{z})$ for $z$ small and $0\le p \le 1$. 

More precisely, for an integer $m$ and arbitrary complex variables $z$ and $p$ let
us define the slightly more general functions
\begin{equation}\label{eq:centralfunc}
F_m(x,p, z) := \log (1 - p + p\eee^{xz-m\log \cosh(z)}).
\end{equation}
In this section we do not require that $p$ is a probability and consider $p$ as a complex-valued quantity.
We will next compute the power series expansion of some linear combinations of $F_m(x,p, z)$. Here, the $x$ variable and $m$ will be given and the linear combinations will be expanded in the $p$ and $z$ variables around the origin $(0,0)$.
For any fixed $x$ and $m$, if $|p|< 2$ and $|z|< z_0$ with sufficiently small $z_0>0$, then
$$
|p(\eee^{zx-m\log \cosh(z)}-1)| < 1.
$$
Thus, for fixed $x$ and $m$ the function $F_m(x,p,z)$ is an analytic function of two complex variables $p$ and $z$ on the domain
$$
\mathcal D = \{(p,z)\in\C^2\colon |p| < 2, |z|<z_0\}.
$$
As such, it has a power series expansion which converges uniformly and absolutely on compact subsets of this domain. In particular, by absolute convergence, we can re-arrange and re-group the terms arbitrarily.
For example, the first terms of the power series expansions of $F_1(1,p,z)$, $F_2(2,p,z)$, and $F_2(0,p,z)$  are as follows:
$$
F_1(1,p, z) = p z - \frac{p^2}2 z^2+ \frac{ p(p^2-1)}3 z^3+ \left(\frac {p^2}3-\frac{p^4}{4}\right)z^4+  \mathcal{O}(z^5),
$$
$$
F_2(2,p, z) = 2p z + (p-2p^2) z^2+ \frac{2 p(4p^2-3p-1)}3 z^3- \frac{p(-24p^3-24p^2+5-4)}{6}z^4+  \mathcal{O}(z^5),
$$
and finally
$$
F_2(0,p,z)=-pz^2+\frac 16 p (4-3p)z^4+\mathcal{O}(z^6).
$$
The next lemma collects the expansions which we shall need below.
\begin{lemma}\label{lem:exp_tanh}
We have the following expansions: 
\begin{equation}\label{f1A}
F_1(1,p, z)+F_1(1,p,-z)=  -p^2 \tanh^2 (z) + \mathcal{O}(p^3 z^4) = -p^2 z^2 + \mathcal{O}(p^2 z^4),
\end{equation}
\begin{equation}\label{f6A}
F_1(1,p, z)-F_1(1,p,-z)= 2p\tanh(z)+ \mathcal{O}(p^2 z^3) = 2 p z+ \mathcal{O}(p z^3),
\end{equation}
and
\begin{equation}\label{f7A}
F_2(2,p, z)-F_2(2,p,-z)= 4p\tanh(z)+ \mathcal{O}(p^2 z^3) = 4p z+ \mathcal{O}(p z^3),
\end{equation}
\begin{equation}\label{f8A}
F_2(2,p, z)+F_2(2,p,-z)-2F_2(0,p, z) = 4p(1-p)\tanh^2(z)+ \mathcal{O}(p^3 z^3), 
\end{equation}
\begin{equation}\label{f3A}
F_2(2,p, z)+F_2(2,p,-z)+2F_2(0,p,z)= -4p^2z^2 + \mathcal{O}(p^2z^4).
\end{equation}
\end{lemma}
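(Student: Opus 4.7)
My plan is to first rewrite each $F_m(x,p,z)$ in the form $\log(1+p\cdot h_{m,x}(z))$, where $h_{m,x}(z)$ is an explicit polynomial in $t=\tanh z$. The elementary identity $e^{\pm z}/\cosh z = 1 \pm \tanh z$ (so that $e^{\pm 2z}/\cosh^2 z = (1\pm\tanh z)^2$), together with $\cosh^{-2} z = 1-\tanh^2 z$, gives
$$F_1(1,p,\pm z) = \log(1 \pm p\tanh z),\qquad F_2(2, p, \pm z) = \log(1 \pm 2p\tanh z + p\tanh^2 z),$$
$$F_2(0, p, z) = \log(1 - p\tanh^2 z).$$
Every function appearing on the left-hand side of the lemma thus becomes the logarithm of an explicit polynomial in $p$ and $t = \tanh z$, and $\tanh z = z + \mathcal{O}(z^3)$ lets me translate expansions in $t$ into expansions in $z$.

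Next, I combine the two logarithms in each sum or difference into a single one. For \eqref{f1A} the product telescopes: $(1+p\tanh z)(1-p\tanh z) = 1-p^2\tanh^2 z$, and $\log(1-y) = -y + \mathcal{O}(y^2)$ gives $-p^2\tanh^2 z + \mathcal{O}(p^4\tanh^4 z)$, which absorbs into the stated remainders because $|p|$ is bounded by $2$ on $\mathcal D$. For the differences \eqref{f6A} and \eqref{f7A}, the identity $\log\frac{1+x}{1-x} = 2\operatorname{arctanh}(x)$ reduces everything to expanding $\operatorname{arctanh}$:
$$F_1(1,p,z) - F_1(1,p,-z) = 2\operatorname{arctanh}(p\tanh z),$$
$$F_2(2,p,z) - F_2(2,p,-z) = 2\operatorname{arctanh}\!\Bigl(\tfrac{b}{a}\Bigr)\ \text{with}\ a = 1+p\tanh^2 z,\ b = 2p\tanh z;$$
the expansions $\operatorname{arctanh}(y) = y + \mathcal{O}(y^3)$ and $b/a = 2p\tanh z + \mathcal{O}(p^2 z^3)$ finish these two cases.

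For \eqref{f8A} and \eqref{f3A}, set $u_\pm = \pm 2\tanh z + \tanh^2 z$ and define
$$A := (1 + p u_+)(1 + p u_-) = 1 + (2p - 4p^2)\tanh^2 z + p^2\tanh^4 z,\qquad B := (1-p\tanh^2 z)^2.$$
Then the left-hand side of \eqref{f3A} equals $\log(AB)$ and the left-hand side of \eqref{f8A} equals $\log(A/B)$. A short calculation shows $AB = 1 - 4p^2\tanh^2 z + \mathcal{O}(p^2\tanh^4 z)$ and, exactly, $A - B = 4p(1-p)\tanh^2 z$. Applying $\log(1+w) = w + \mathcal{O}(w^2)$ to each then produces the stated expansions: for \eqref{f3A} one reads off $-4p^2\tanh^2 z + \mathcal{O}(p^2\tanh^4 z)$, and for \eqref{f8A} one has $A/B = 1 + 4p(1-p)\tanh^2 z/(1-p\tanh^2 z)^2$, whose logarithm expands to $4p(1-p)\tanh^2 z + \mathcal{O}(p^3\tanh^4 z)$.

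The main technical subtlety is bookkeeping of error orders. All computations take place on the polydisk $\mathcal D$, where $|p|$ is bounded, so any remainder $\mathcal{O}(p^a z^b)$ with $a \ge a_0$, $b \ge b_0$ automatically implies $\mathcal{O}(p^{a_0} z^{b_0})$. This is the only place where a slip could occur: for instance in \eqref{f8A} the natural remainder from the computation is $\mathcal{O}(p^3 z^4)$, strictly stronger than the stated $\mathcal{O}(p^3 z^3)$, but it must be ``coarsened'' to match the form in the lemma. The algebra itself is routine once the three cases have been reduced to polynomials in $t = \tanh z$.
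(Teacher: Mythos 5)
Your argument is correct and takes a genuinely different, and in my view cleaner, route than the paper's. The identities $e^{\pm z}/\cosh z = 1 \pm \tanh z$ and $1/\cosh^2 z = 1 - \tanh^2 z$ collapse each $F_m$ to the logarithm of an explicit low-degree polynomial in $t=\tanh z$, namely $F_1(1,p,\pm z) = \log(1 \pm pt)$, $F_2(2,p,\pm z) = \log(1 \pm 2pt + pt^2)$, $F_2(0,p,z) = \log(1 - pt^2)$; the paper never exploits these closed forms. Instead, the paper proves only \eqref{f6A} in detail, by expanding $F_1(1,p,z)-F_1(1,p,-z) = \sum_n p^n Q_n^{(1)}(z)$, reading off $Q_0^{(1)} = 0$ and $Q_1^{(1)} = 2\tanh z$ from $p$-derivatives at $p=0$, and then checking via two fairly ugly explicit expressions that $\partial_z G$ and $\partial_z^2 G$ both vanish at $z=0$, where $G = F_1(1,p,z)-F_1(1,p,-z)-2p\tanh z$; the other four formulae are asserted to follow ``similarly''. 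Your approach buys a uniform treatment of all five identities, with \eqref{f1A}, \eqref{f6A}, \eqref{f7A} reducing to the single-variable expansions of $\log(1-y)$ and $\operatorname{arctanh}$, and \eqref{f3A}, \eqref{f8A} to elementary algebra with $A = (1+pu_+)(1+pu_-)$ and $B = (1-pt^2)^2$. One place where you are terser than you should be: for \eqref{f8A}, the asserted remainder $\mathcal{O}(p^3 t^4)$ is correct but does \emph{not} follow from $\log(1+w)=w+\mathcal{O}(w^2)$ alone, which with $w = 4p(1-p)t^2/(1-pt^2)^2$ only gives $\mathcal{O}(p^2 t^4)$. One must carry the expansion to $\log(1+w)=w-\tfrac12 w^2+\mathcal{O}(w^3)$ and observe that the two $p^2 t^4$ contributions cancel: $8p^2(1-p)t^4$ from expanding $w$, and $-8p^2(1-p)^2 t^4$ from $-w^2/2$, leaving $8p^3(1-p)t^4 + \mathcal{O}(p^3 t^6)$. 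With that one line supplied, the argument is complete, and it gives a slightly sharper remainder ($\mathcal{O}(p^3 z^4)$) than the $\mathcal{O}(p^3 z^3)$ stated in the lemma.
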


\begin{proof}
The proofs of all formulae are similar. As an example, we prove~\eqref{f6A}. To this end, we first expand the analytic function $F_1(1,p, z)-F_1(1,p,-z)$ in a Taylor series of the following form
$$
F_1(1,p, z)-F_1(1,p,-z) = \sum_{n=0}^\infty p^n Q_{n}^{(1)} (z).
$$
This expansion converges absolutely and uniformly on compact subsets of $\mathcal D$, and the coefficients $Q_{n}^{(1)} (z)$ are analytic functions on $\{|z|<z_0\}$.
We need to compute the functions $Q_{n}^{(1)} (z)$ for $n=0,1$. This is done by the formula
$$
Q_{n}^{(1)} (z) = \frac{1}{n!} \frac{\dd^n}{\dd p^n} (F_1(1,p, z)-F_1(1,p,-z)) \Big|_{p=0}.
$$
Taking $n=0$ and $n=1$, we obtain that $Q_0^{(1)}(z) = 0$ and
\begin{align*}
Q_{1}^{(1)} (z)
&=\left(\frac{-1+e^{z-\log \cosh z}}{1-p+pe^{z-\log \cosh z}} - \frac{-1+e^{-z-\log \cosh z}}{1-p+pe^{-z-\log \cosh z}}\right) \Big |_{p=0}
=2\tanh z.
\end{align*}
That is, the expansion takes the form
$$
F_1(1,p, z)-F_1(1,p,-z) = 2p \tanh z + \sum_{n=2}^\infty p^n Q_{n}^{(1)} (z).
$$
To see that each $Q_{n}^{(1)} (z)$, $n\geq 2$, does not contain terms of the form $c_0,c_1z,c_2z^2$, we consider the function
$$
G(p,z):= F_1(1,p, z)-F_1(1,p,-z) - 2p \tanh z.
$$
First observe that
$
G(p,0)= 0.
$
Moreover, differentiating with respect to $z$ yields:
{\footnotesize
\begin{align*}
&\frac{\dd}{\dd z} G(p,z)= \frac{8 p^3 \eee^{2 z} \left(\eee^{2 z}-1\right)^2}{\left(\eee^{2 z}+1\right)^2 \left((p-1) \eee^{2 z}-p-1\right) \left((p+1) \eee^{2 z}-p+1\right)},\\
&\frac{\dd^2}{\dd z^2} G(p,z) =
\frac{16 p^3 \eee^{2 z} \left(\eee^{2 z}-1\right) \left(-4 \left(p^2-1\right) \eee^{2 z}-4 \left(p^2-1\right) \eee^{6 z}+\left(p^2-1\right) \eee^{8 z}+2 \left(3 p^2+5\right) \eee^{4 z}+p^2-1\right)}{\left(\eee^{2 z}+1\right)^3 \left((1-p)\eee^{2 z}+p+1\right)^2 \left((p+1) \eee^{2 z}-p+1\right)^2}.
\end{align*}
}
Both derivatives vanish at $z=0$, thus proving that we have an expansion of the form
$$
F_1(1,p, z)-F_1(1,p,-z) = 2p \tanh z +  \sum_{n=2}^\infty p^n z^3 \tilde Q_{n}^{(1)} (z),
$$
where the coefficients $\tilde Q_{n}^{(1)}$ are analytic functions on $\{|z|<z_0\}$.
This proves~\eqref{f6A}.
\end{proof}

\section{Proof of Theorem \ref{theo:high_temp_clt}}\label{sec:3}
In this section we will prove Theorem \ref{theo:high_temp_clt}. The main difference of our following considerations to the proof of the CLT for $\sqrt N m_N$ given in \cite{KaLS19} is that here we will replace the quantity $Z_N(\beta,g)$ as defined in \eqref{ZN(g)} by
a term, which allows an asymptotic expansion also for smaller values of $p$.
To this end, for $\sigma \in \{\pm 1\}^N$ we introduce
\begin{equation}\label{eq:T}
T(\sigma)\colon = \exp\left(\gamma \sum_{i,j=1}^N \vep_{i,j} \sigma_i \sigma_j - \log \cosh(\gamma)\sum_{i,j=1}^N \vep_{i,j}\right)
\end{equation}
and recall that for fixed $\beta <1$ we defined $\gamma:= \frac{\beta}{2 Np}.$
Moreover, let
\begin{equation}\label{tildeZN(g)}
\tilde Z_N(\beta, g):= \sum_{\sigma \in \{-1, +1\}^N} g\left(\frac{|\sigma|}{\sqrt N}\right)T(\sigma)
\end{equation}
for $g \in \mathcal{C}^b(\R), g \ge 0, g \not \equiv 0$.

We will study the behaviour of $ \tilde Z_N(\beta,g)$. To this end, we will introduce the following set of pairs of spins
\begin{eqnarray}\label{eq:typical}
S_N&:=&\left\{(\sigma, \tau) \in \{\pm 1\}^N \times \{\pm 1\}^N: |\sigma|^2 \le N (Np)^{1/5},|\tau|^2\le N (Np)^{1/5},
\right. \nonumber \\
&&\quad \left. |\sigma \tau|^2 \le N (Np)^{1/5}\right\}.
\end{eqnarray}

Here we set $|\sigma \tau|:= \sum_{i=1}^N \sigma_i \tau_i$. The spins in $S_N$ will be called the {\it typical} spins in the rest of this section. The spins in the complement $S_N^c:= (\{\pm 1\}^N \times \{\pm 1\}^N) \setminus S_N$ are called {\it atypical}.
In a slight abuse of notation, we say that a configuration $\sigma \in \{\pm 1\}^N$ is typical and write
$\sigma \in S_N$. By this we will mean that $|\sigma|^2 \le N (Np)^{1/5}$.

We start with the following computation
\begin{lemma}\label{ET}
For all $p=p(N)$ such that $pN \to \infty$ and all $\sigma\in S_N$ we have
\begin{equation}\label{eq:ET}
\E T(\sigma)=
\exp\left( -\frac{\beta^2}{8}+\frac{\beta}{2N}|\sigma|^2+o(1)\right)
\end{equation}
with $o(1)$-term that is uniform over $\sigma \in S_N$.
\end{lemma}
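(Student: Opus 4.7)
The plan is to compute $\log \E T(\sigma)$ explicitly by combining the independence of the edge indicators $\vep_{i,j}$ with the Taylor expansions supplied in Lemma \ref{lem:exp_tanh}. First, factoring over independent edges I would write
$$
\log \E T(\sigma) = \sum_{i,j=1}^N \log\bigl(1-p + p \exp(\gamma\sigma_i\sigma_j - \log\cosh\gamma)\bigr) = \sum_{i,j=1}^N F_1(\sigma_i\sigma_j, p, \gamma),
$$
where the second equality is exactly the definition \eqref{eq:centralfunc} of $F_1$ with $m=1$ and $x = \sigma_i\sigma_j \in \{\pm 1\}$.

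Next I would split the sum according to the sign of $\sigma_i\sigma_j$. Letting $N_\pm$ denote the number of pairs $(i,j)$ with $\sigma_i\sigma_j = \pm 1$, we have $N_+ + N_- = N^2$ and $N_+ - N_- = |\sigma|^2$, so
$$
\log \E T(\sigma) = \frac{N^2}{2}\bigl(F_1(1,p,\gamma)+F_1(1,p,-\gamma)\bigr) + \frac{|\sigma|^2}{2}\bigl(F_1(1,p,\gamma)-F_1(1,p,-\gamma)\bigr).
$$
Plugging in the expansions \eqref{f1A} and \eqref{f6A} from Lemma \ref{lem:exp_tanh} yields
$$
\log \E T(\sigma) = -\frac{N^2 p^2 \gamma^2}{2} + \frac{N^2}{2}\,\mathcal{O}(p^2\gamma^4) + |\sigma|^2 p \gamma + |\sigma|^2 \, \mathcal{O}(p\gamma^3).
$$

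Finally, substituting $\gamma = \beta/(2Np)$ converts the two leading terms into precisely $-\beta^2/8 + \beta|\sigma|^2/(2N)$, which matches \eqref{eq:ET}. It remains to verify that the two error terms are $o(1)$ uniformly on $S_N$. The first one is $\mathcal{O}(N^2 p^2 \gamma^4) = \mathcal{O}((Np)^{-2}) = o(1)$ because $Np\to\infty$. The second one rewrites as $\mathcal{O}(|\sigma|^2/(N^3 p^2))$; invoking the typicality bound $|\sigma|^2 \le N(Np)^{1/5}$ reduces it to $\mathcal{O}((Np)^{-9/5})$, again $o(1)$. The only mild obstacle here is precisely this bookkeeping: the exponent $1/5$ in the definition of $S_N$ was tailored so that typical $|\sigma|^2$ is large enough to produce the Gaussian tilt $\beta|\sigma|^2/(2N)$ yet still small enough to kill the cubic remainder $p\gamma^3$ coming from~\eqref{f6A}. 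Everything else is a straightforward substitution, so no deeper estimate should be required.
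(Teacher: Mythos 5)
Your proof is correct and takes essentially the same route as the paper: factor over independent edges to get $\sum_{i,j} F_1(\sigma_i\sigma_j, p, \gamma)$, reduce this to a combination of $F_1(1,p,\pm\gamma)$ weighted by $N^2$ and $|\sigma|^2$ (the paper writes $f(x)=a_0+a_1x$ on $\{\pm1\}$, you count $N_\pm$; these are identical algebraic moves), and then invoke \eqref{f1A}, \eqref{f6A} together with the typicality bound $|\sigma|^2\le N(Np)^{1/5}$. The error bookkeeping matches the paper as well.
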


\begin{proof}
Similarly to the computation of \eqref{eq:motiv} we compute
$$
\E T(\sigma)
= \prod_{i,j=1}^N \E \left[ \eee^{\gamma \vep_{i,j} \sigma_i \sigma_j-\vep_{i,j}\log \cosh(\gamma)}\right]
=\prod_{i,j=1}^N\left(1-p + p \eee^{\gamma \sigma_i \sigma_j-\log \cosh(\gamma)}\right).
$$
Defining $$f(x) = f(x; p,\gamma) = \log (1-p + p\eee^{\gamma x-\log \cosh(\gamma)})=F_1(x,p,\gamma),$$
with $F_1$ defined in \eqref{eq:centralfunc} we can write
$$
\E T(\sigma)
=
\exp\left(\sum_{i,j=1}^N \log (1-p + p\eee^{\gamma \sigma_i \sigma_j-\log \cosh(\gamma)})\right)
=\exp\left(\sum_{i,j=1}^N f(\sigma_i \sigma_j)\right).
$$
Observe that since $\sigma_i\in \{\pm 1\}$ for all $i$, we are interested in the behaviour of $f$ only in the two values $+1$ and $-1$.
For these two values we can rewrite $f$ in a linear form. This means that  we write
$$
f(x) = a_0 + a_1 x, \qquad x \in \{-1,+1\}.
$$
Here $a_0$ and $a_1$ depend on $p$ and $\gamma$, of course, and are given by
\begin{align*}
a_0
&= \frac  {f(1) + f(-1)}{2} =\frac{F_1(1,p,\gamma)+F_1(1,p,-\gamma)}{2}
,\\
a_1
&=
\frac {f(1)-f(-1)}2=\frac{F_1(1,p,\gamma)-F_1(1,p,-\gamma)}{2}
\end{align*}
because $F_1(-1,p,\gamma)=F_1(1,p,-\gamma)$.
Thus
$$
\E  T(\sigma)  =
\exp\left(\sum_{i,j=1}^N f(\sigma_i \sigma_j)\right)=
\exp\left(\sum_{i,j=1}^N (a_0+a_1\sigma_i \sigma_j)\right)
=
\exp\left(N^2 a_0+a_1|\sigma|^2\right).
$$
Recalling~\eqref{f1A} we obtain
$$
N^2a_0= -N^2\frac 12 p^2 \gamma^2+\mathcal{O}(N^2 p^2 \gamma^4)=-\frac{\beta^2}{8}+o(1).
$$
On the other hand,
by \eqref{f6A}
$$
a_1 |\sigma|^2 = (p \gamma+\mathcal{O}(p \gamma^3))|\sigma|^2= \frac{\beta}{2N}|\sigma|^2+o(1)
$$
with an $o(1)$-term that is uniform over all $\sigma \in S_N$. Indeed, for a typical configuration $\sigma$ we have
$$
p \gamma^3|\sigma|^2= \frac{p\beta^3}{8 N^3 p^3}|\sigma|^2\le
\frac{\beta^3}{8 N^3 p^3} (Np)^{\frac 65}
= \frac{\beta^3}{8}\frac {1}{(Np)^{\frac{9}{5}}} \to 0.
$$
This proves the claim.
\end{proof}

Since eventually we want to compare $\V \tilde Z_N(\beta,g)$ to $[\E \tilde Z_N(\beta,g)]^2$, in a next step, we will compute $\E (T(\sigma) T(\tau))$ for typical $\sigma$ and $\tau$.

\begin{lemma}\label{ETsTt}
For $p=p(N)$ such that $pN \to \infty$ and any $(\sigma, \tau) \in S_N$ we have
\begin{equation}
\E\left[ T(\sigma)T(\tau)\right]
=
\exp\left(-\frac{\beta^2}{4}+\frac{\beta}{2N}(|\sigma|^2+|\tau|^2)+o(1)
\right)
\end{equation}
uniformly over all $(\sigma, \tau) \in S_N$.
\end{lemma}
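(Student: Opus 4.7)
The plan is to mirror the computation of Lemma~\ref{ET} but now for the pair $(\sigma,\tau)$, reducing everything to the ``three'' linear combinations of $F_2$-values appearing in Lemma~\ref{lem:exp_tanh}. First, by independence of the $\vep_{i,j}$,
\begin{align*}
\E[T(\sigma)T(\tau)]
&= \prod_{i,j=1}^{N} \E\!\left[\eee^{\vep_{i,j}\bigl(\gamma(\sigma_i\sigma_j+\tau_i\tau_j)-2\log\cosh\gamma\bigr)}\right] \\
&= \exp\!\Biggl(\sum_{i,j=1}^{N} F_2\bigl(\sigma_i\sigma_j+\tau_i\tau_j,\,p,\,\gamma\bigr)\Biggr),
\end{align*}
so the task is to sum the function $h(x,y):=F_2(x+y,p,\gamma)$ over $x=\sigma_i\sigma_j,\,y=\tau_i\tau_j\in\{\pm1\}$.

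Second, since $h$ takes only the three values $F_2(\pm2,p,\gamma),\,F_2(0,p,\gamma)$ on $\{\pm1\}^2$ and is symmetric in $x,y$, it admits an exact representation
\[
h(x,y)=b_0+b_1(x+y)+b_2 \,xy,\qquad x,y\in\{\pm1\},
\]
with $b_0,b_1,b_2$ given by inverting the three-point system. Explicitly,
\[
4b_0=F_2(2,p,\gamma)+F_2(2,p,-\gamma)+2F_2(0,p,\gamma),\quad 4b_1=F_2(2,p,\gamma)-F_2(2,p,-\gamma),
\]
\[
4b_2=F_2(2,p,\gamma)+F_2(2,p,-\gamma)-2F_2(0,p,\gamma),
\]
so by \eqref{f3A}, \eqref{f7A}, \eqref{f8A},
\[
b_0=-p^2\gamma^2+\mathcal{O}(p^2\gamma^4),\quad
b_1=p\gamma+\mathcal{O}(p\gamma^3),\quad
b_2=p(1-p)\tanh^2\gamma+\mathcal{O}(p^3\gamma^3).
\]
Summing gives
\[
\sum_{i,j=1}^{N} h(\sigma_i\sigma_j,\tau_i\tau_j)=N^2 b_0+b_1\bigl(|\sigma|^2+|\tau|^2\bigr)+b_2\,|\sigma\tau|^2.
\]

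Third, plug in $\gamma=\beta/(2Np)$. The first two terms are treated exactly as in Lemma~\ref{ET}: $N^2b_0=-\beta^2/4+o(1)$, and on $S_N$ we have $b_1(|\sigma|^2+|\tau|^2)=\frac{\beta}{2N}(|\sigma|^2+|\tau|^2)+o(1)$ with a uniform $o(1)$, since the error $p\gamma^3|\sigma|^2$ is bounded by $\beta^3/(8(Np)^{9/5})\to 0$. The one genuinely new contribution is the cross term $b_2|\sigma\tau|^2$, and this is where the third typicality condition in \eqref{eq:typical} becomes essential: the leading bound $b_2=\mathcal{O}(p\gamma^2)=\mathcal{O}\bigl((N^2p)^{-1}\bigr)$, combined with $|\sigma\tau|^2\le N(Np)^{1/5}$, yields
\[
b_2|\sigma\tau|^2=\mathcal{O}\!\bigl((Np)^{-4/5}\bigr)=o(1)
\]
uniformly on $S_N$; the remainder $\mathcal{O}(p^3\gamma^3)|\sigma\tau|^2$ is even smaller. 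Exponentiating gives the claimed identity.

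The main obstacle is exactly the $|\sigma\tau|^2$ contribution: unlike Lemma~\ref{ET}, the interaction between $\sigma$ and $\tau$ creates a diagonal term whose control forces us to restrict to configurations with small overlap, which is why the third inequality defining $S_N$ is built in. Everything else is the same bookkeeping as before, once the linear expansion of $h$ in $x,y\in\{\pm1\}$ is set up and the formulas from Lemma~\ref{lem:exp_tanh} are applied.
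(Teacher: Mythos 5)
Your proposal is correct and follows essentially the same route as the paper: factorize by independence, write the summand as $F_2(\sigma_i\sigma_j+\tau_i\tau_j,p,\gamma)$, expand it linearly in $\sigma_i\sigma_j$ and $\tau_i\tau_j$ on $\{\pm1\}^2$, plug in the expansions~\eqref{f3A}, \eqref{f7A}, \eqref{f8A} from Lemma~\ref{lem:exp_tanh}, and control the $|\sigma\tau|^2$ cross term using the third defining condition of $S_N$. The only cosmetic difference is that you build the symmetry $b_1=b_2$ into the ansatz from the start (a three-coefficient representation) whereas the paper writes a four-coefficient representation and then notes $b_1=b_2$ after inverting the system.
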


\begin{proof}
The idea of the proof is similar to the proof of Lemma \ref{ET}.
Taking the expectations as in the proof of Lemma \ref{ET} we get
\begin{eqnarray*}
\E\left[ T(\sigma) T(\tau)\right]&=&
\prod_{i,j=1}^N \E \left[ \eee^{\gamma (\sigma_i \sigma_j+\tau_i \tau_j)\vep_{i,j} -2\vep_{i,j} \log \cosh(\gamma)}\right]
= \E\exp\left(\sum_{i,j=1}^N g(\sigma_i \sigma_j+ \tau_i \tau_j)\right),
\end{eqnarray*}
where we have set
$$
g(x):= \log(1-p+p\eee^{\gamma x-2 \log \cosh (\gamma)})= F_2(x,p,\gamma)
$$
with $F_2$ defined as in \eqref{eq:centralfunc}.

Note that we are just interested in $g(0)$ and  $g(\pm 2)$ since both, $\sigma$ and $\tau$, are in $\{\pm 1\}^N$ and hence we have that $\sigma_i \sigma_j+\tau_i \tau_j \in \{-2,0,2\}$.
For these values, the function $g$ can be represented in the form
$$
g(x_1+x_2)= b_0 + b_1 x_1 + b_2 x_2 + b_{12} x_1 x_2,
\quad x_1,x_2\in \{-1,+1\},
$$
for coefficients $b_0,b_1,b_2,b_{12}$ that again depend on $p$ and $\gamma$.
They can be computed by solving a system of
four linear equations in four variables:
\begin{align*}
b_0 + b_1+b_2 +b_{12} =& g(2),\\
b_0 + b_1-b_2 -b_{12} =& g(0),\\
b_0 - b_1+b_2 -b_{12} =& g(0),\\
b_0 - b_1-b_2 +b_{12} =& g(-2).
\end{align*}
This system of equations has a unique solution
\begin{align*}
b_0 &= \frac {g(2)+g(-2)+2g(0)}4=\frac {F_2(2,p,\gamma) + F_2(2,p,-\gamma)+ 2F_2(0,p,\gamma)}4\\
b_{12} &= \frac {g(2)+g(-2)-2g(0)}4 =\frac {F_2(2,p,\gamma) + F_2(2,p,-\gamma)- 2F_2(0,p,\gamma)}4 , \\
b_1
&= b_{2} = \frac {g(2)-g(-2)}4 =\frac {F_2(2,p,\gamma) - F_2(2,p,-\gamma)}4.
\end{align*}
Thus
$$
\sum_{i,j=1}^N g(\sigma_i \sigma_j+ \tau_i \tau_j)= N^2 b_0 +b_1(|\sigma|^2+|\tau|^2)+b_{12} |\sigma \tau|^2.
$$
Using \eqref{f7A},\eqref{f8A},\eqref{f3A}, we see that
$$
b_0= -p^2\gamma^2+\mathcal{O}(p^2\gamma^4) \quad\mbox{and hence } N^2 b_0= -N^2p^2\gamma^2+\mathcal{O}(N^2p^2\gamma^4)= -\frac{\beta^2}4+o(1),
$$
$$
b_{12}=\mathcal{O}(p\gamma^2) \quad\mbox{and hence } b_{12}|\sigma \tau|^2=\mathcal{O}(p\gamma^2 N (Np)^{1/5})=\mathcal{O}((Np)^{-4/5})=o(1)
$$
for all typical $(\sigma, \tau) \in S_N$, where the $o(1)$-terms are uniform.
And finally,
$$
b_1=p\gamma +\mathcal{O}(p\gamma^3) \quad\mbox{and hence } b_{1}|\sigma|^2=\frac{\beta}{2N}|\sigma|^2+\mathcal{O}(|\sigma|^2 p\gamma^3)
=\frac{\beta}{2N}|\sigma|^2+o(1)
$$
as well as $b_{1}|\tau|^2=\frac{\beta}{2N}|\tau|^2+o(1)$ for all typical $\sigma$ and $\tau$.
This proves the claim.
\end{proof}

Combining Lemma \ref{ET} and \ref{ETsTt} yields
\begin{corollary}\label{cor:cov}
Uniformly over all $(\sigma, \tau)\in S_N$ we have
\begin{equation}
\mathrm{Cov}(T(\sigma)T(\tau))=o(\E(T(\sigma)\E(T(\tau)).
\end{equation}
\end{corollary}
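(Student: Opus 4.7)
The plan is to derive Corollary \ref{cor:cov} as an immediate consequence of Lemma \ref{ET} and Lemma \ref{ETsTt}, by observing that both quantities $\E T(\sigma)\cdot\E T(\tau)$ and $\E[T(\sigma)T(\tau)]$ have matching exponential leading behaviour with uniform error terms on $S_N$.

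First I would apply Lemma \ref{ET} twice and multiply to obtain, uniformly over $(\sigma,\tau)\in S_N$,
$$
\E T(\sigma)\cdot\E T(\tau)
= \exp\!\left(-\frac{\beta^2}{4} + \frac{\beta}{2N}\bigl(|\sigma|^2+|\tau|^2\bigr) + o(1)\right).
$$
Next, Lemma \ref{ETsTt} gives directly
$$
\E[T(\sigma)T(\tau)] = \exp\!\left(-\frac{\beta^2}{4} + \frac{\beta}{2N}\bigl(|\sigma|^2+|\tau|^2\bigr) + o(1)\right),
$$
with the same uniformity. Dividing the two expressions, the common exponential factor cancels and the quotient becomes $e^{o(1)}$, where the $o(1)$ term is uniform on $S_N$.

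Consequently,
$$
\frac{\mathrm{Cov}(T(\sigma),T(\tau))}{\E T(\sigma)\cdot \E T(\tau)}
= \frac{\E[T(\sigma)T(\tau)]}{\E T(\sigma)\cdot\E T(\tau)} - 1
= e^{o(1)} - 1 = o(1),
$$
uniformly over $(\sigma,\tau)\in S_N$, which is exactly the claim.

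There is essentially no obstacle in this step; the real work has already been done in Lemmas \ref{ET} and \ref{ETsTt}. It is worth remarking, however, that the coincidence of the two exponents is not accidental: it relies on the fact that the coefficient $b_1$ of $|\sigma|^2$ and $|\tau|^2$ in the paired computation agrees with the single-spin coefficient $a_1$ to leading order in $p\gamma$, and that the ``interaction term'' $b_{12}|\sigma\tau|^2$ is suppressed for typical configurations thanks to the $\mathcal{O}(p\gamma^2)$ bound from \eqref{f8A} combined with the typicality constraint $|\sigma\tau|^2\le N(Np)^{1/5}$ built into the definition of $S_N$.
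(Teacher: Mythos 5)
Your proof is correct and follows exactly the route the paper intends: the paper's proof of Corollary \ref{cor:cov} simply says it ``follows directly from the definition of the covariance and Lemmas \ref{ET} and \ref{ETsTt},'' and your computation spells out precisely that deduction, including the crucial point that the uniform $o(1)$ error terms let you conclude $\mathrm{Cov}(T(\sigma),T(\tau))/(\E T(\sigma)\,\E T(\tau)) = e^{o(1)} - 1 = o(1)$ uniformly over $S_N$.
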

\begin{proof}
This follows directly from the definition of the covariance and Lemmas \ref{ET} and \ref{ETsTt}.
\end{proof}
Note that the claim would not be true if would drop the $\log \cosh \gamma$ term in the definition of $T(\sigma)$. The role of this term is to make $T(\sigma)$'s ``asymptotically independent''.

Corollary \ref{cor:cov} already suggests that we should have $\V \tilde Z_N(\beta, g) = o(\E (\tilde Z_N(\beta,g))^2)$ for all $g \in \mathcal{C}^b(\R), g \ge 0, g \not\equiv 0$. However, to prove this we still need to control the contribution of the atypical configurations. This is done in the following propositions.

The first of them may be interesting in its own rights, moreover, its proof is also a nice warm-up for the proof of Proposition
\ref{prop:var}  thereafter, which is similar, but technically more demanding.

\begin{proposition}\label{prop:expect}
For all $g \in \mathcal{C}^b(\R), g \ge 0, g \not\equiv 0$ we have
\begin{equation}
\lim_{N \to \infty} \frac{\E \tilde Z_N(\beta, g)}{2^N \eee^{-\frac{\beta^2}{8}} \E_{\xi} [g(\xi)\eee^{\frac {\beta}{2}\xi^2}]}=1.
\end{equation}
Here $\xi$ denotes a standard normally distributed random variable.
\end{proposition}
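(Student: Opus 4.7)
The plan is to split
\[
\E \tilde Z_N(\beta,g) = \sum_{\sigma\in S_N} g\bigl(|\sigma|/\sqrt N\bigr)\,\E T(\sigma) + \sum_{\sigma\notin S_N} g\bigl(|\sigma|/\sqrt N\bigr)\,\E T(\sigma)
\]
into typical and atypical contributions. Writing $\xi_N := |\sigma|/\sqrt N$ and letting $\E_\sigma$ and $\P_\sigma$ denote expectation and probability under the uniform law on $\{\pm 1\}^N$, Lemma~\ref{ET} rewrites the typical sum as
\[
2^N e^{-\beta^2/8}(1+o(1))\,\E_{\sigma}\!\left[g(\xi_N)\,e^{\beta\xi_N^2/2}\,\mathbf 1_{\{\xi_N^2\le (Np)^{1/5}\}}\right].
\]
The classical CLT gives $\xi_N\Rightarrow\xi\sim\mathcal N(0,1)$, and Hoeffding's inequality yields $\sup_N\E_\sigma e^{q\beta\xi_N^2/2}<\infty$ for some $q>1$ because $\beta<1$; hence the family $g(\xi_N)e^{\beta\xi_N^2/2}$ is uniformly integrable and the expectation converges to $\E[g(\xi)e^{\beta\xi^2/2}]$. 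The truncation by $\mathbf 1_{\{\xi_N^2\le (Np)^{1/5}\}}$ is harmless: a H\"older split combined with Hoeffding's tail bound $\P_\sigma(\xi_N^2>t)\le 2e^{-t/2}$ shows that the discarded contribution vanishes.

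For the atypical sum one needs a pointwise upper bound on $\E T(\sigma)$ valid for \emph{every} $\sigma$, not merely on $S_N$. Using the independence of the $\vep_{i,j}$ I would compute directly
\[
\E T(\sigma) = (1-u^2)^{N^2/2}\left(\frac{1+u}{1-u}\right)^{|\sigma|^2/2},\qquad u := p\tanh\gamma.
\]
Here $(1-u^2)^{N^2/2}$ is uniformly bounded and $\operatorname{arctanh}(u)\le(1+\varepsilon)\beta/(2N)$ for any fixed $\varepsilon>0$ and $N$ large, so $\E T(\sigma)\le C e^{(1+\varepsilon)\beta|\sigma|^2/(2N)}$ for every $\sigma$. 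Choosing $\varepsilon$ so small that $(1+\varepsilon)\beta<1$ and re-running the same H\"older-plus-Hoeffding argument as before gives
\[
\sum_{\sigma\notin S_N}\E T(\sigma)\le C\cdot 2^N\,\E_\sigma\!\left[e^{(1+\varepsilon)\beta\xi_N^2/2}\mathbf 1_{\{\xi_N^2>(Np)^{1/5}\}}\right] = o(2^N),
\]
which is negligible against the main term $2^N e^{-\beta^2/8}\E[g(\xi)e^{\beta\xi^2/2}]$.

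The main obstacle is precisely the uniform control of $\E T(\sigma)$ on $S_N^c$: the expansion underlying Lemma~\ref{ET} carries an $O(p\gamma^3|\sigma|^2)$ error that is only $o(1)$ on $S_N$, and for large $|\sigma|^2$ the quantity $\E T(\sigma)$ can grow exponentially in $|\sigma|^2/N$. The closed-form expression above by-passes this issue, and the condition $\beta<1$ is used crucially both to ensure the uniform integrability of $e^{\beta\xi_N^2/2}$ under the uniform measure and to guarantee that the Gaussian tail of $\xi_N$ outcompetes the exponential growth of $\E T(\sigma)$ on the atypical set.
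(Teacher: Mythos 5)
Your proof is correct and follows the paper's overall architecture: split $\E\tilde Z_N(\beta,g)$ into contributions from $S_N$ and $S_N^c$, use Lemma~\ref{ET} on the typical part, and conclude with the de Moivre--Laplace CLT plus uniform integrability. The genuine difference lies in the treatment of the atypical configurations. The paper stays with the Taylor expansion of $a_1$ and writes $\E T(\sigma)\le e^{-\beta^2/8+o(1)+\frac{\beta}{2N}|\sigma|^2+\frac{C_{N,1}}{N^2p^2}\frac{|\sigma|^2}{N}}$ for a bounded sequence of constants $C_{N,1}$, then invokes the one-dimensional local limit theorem $2^{-N}\nu_N(k)\le CN^{-1/2}e^{-k^2/(2N)}$ and bounds the resulting Riemann sum by an integral. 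You instead observe that $f(\pm 1)=\log(1\pm p\tanh\gamma)$, yielding the exact closed form $\E T(\sigma)=(1-u^2)^{N^2/2}\bigl(\tfrac{1+u}{1-u}\bigr)^{|\sigma|^2/2}$ with $u=p\tanh\gamma$, which immediately gives the uniform bound $\E T(\sigma)\le e^{(1+\varepsilon)\beta|\sigma|^2/(2N)}$ for all $\sigma$ once $N$ is large (since $(1-u^2)^{N^2/2}\le1$ and $\operatorname{arctanh}u\le(1+\varepsilon)\beta/(2N)$). You then use Hoeffding's tail bound $\P_\sigma(\xi_N^2>t)\le 2e^{-t/2}$ in place of the local limit theorem. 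The closed form is a genuine simplification: it makes the uniform control on $S_N^c$ transparent, avoids any bookkeeping of error constants, and would also streamline the statement and proof of Lemma~\ref{ET}. Hoeffding versus the local limit theorem is a matter of taste here --- the latter gives an extra $N^{-1/2}$ factor that is not needed since Hoeffding already yields $o(2^N)$. Both arguments use $\beta<1$ in the same two places: to make the exponent strictly negative after cancelling the entropy term on the atypical set, and to secure uniform integrability of $e^{\beta\xi_N^2/2}$ for the main term.
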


\begin{proof}
By an obvious decomposition of $\{\pm 1\}^N$ into typical and atypical $\sigma$'s, we have
\begin{equation}\label{eq:expect_typical_atypical}
\E \tilde Z_N(\beta, g)
=
\sum_{\sigma \in S_N}g\left(\frac{|\sigma|}{\sqrt N}\right)\E T(\sigma)
+
\sum_{\sigma \in S_N^c} g\left(\frac{|\sigma|}{\sqrt N}\right)\E T(\sigma).
\end{equation}

For $\sigma\in S_N$ we can use the result of Lemma~\ref{ET} to obtain:
\begin{eqnarray*}
\sum_{\sigma \in S_N}g\left(\frac{|\sigma|}{\sqrt N}\right)T(\sigma)=
\sum_{\sigma \in S_N}g\left(\frac{|\sigma|}{\sqrt N}\right) \exp\left( -\frac{\beta^2}{8}+\frac{\beta}{2N}|\sigma|^2+o(1)\right)
\end{eqnarray*}
with $o(1)$-term that is uniform over $S_N$.

For atypical $\sigma$'s we claim that
\begin{equation}\label{eq:claim_untypical}
\sum_{\sigma \in S_N^c} g\left( \frac{|\sigma|}{\sqrt N} \right) \E T(\sigma)
=  o\left(2^N\right).
\end{equation}
Letting $\|g\|_\infty:= \sup_{t\in\R} |g(t)| < \infty$ and using the same arguments as in the proof of Lemma~\ref{ET}, we can write
\begin{align*}
\left|\sum_{\sigma \in S_N^c} g\left( \frac{|\sigma|}{\sqrt N} \right)\E T(\sigma)\right|
&\leq
\|g\|_\infty \sum_{\sigma \in S_N^c}\E T(\sigma)\\
&\leq
\|g\|_\infty \sum_{\sigma \in S_N^c} \eee^{-\frac{\beta^2}8+ o(1)+ \frac{\beta}{2N}|\sigma|^2+ \frac{C_{N,1}}{N^2p^2}\frac{|\sigma|^2}N}
\end{align*}
for some sequence of constants $C_{N,1}$ that does not depend on $\sigma$ and stays bounded. Hence
\begin{align*}
\left|\sum_{\sigma \in S_N^c} g\left( \frac{|\sigma|}{\sqrt N} \right)\E T(\sigma)\right|&\le
\|g\|_\infty \eee^{-\frac{\beta^2}8+ o(1)} \sum_{k\in\Z : k^2>N (Np)^{\frac 15}}
\eee^{\frac{\beta}{2N} k^2 + \frac{C_{N,1}}{N^2p^2}\frac{k^2}N} \nu_N(k),
\end{align*}
where $\nu_N(k)$ is the number of $\sigma\in \{-1,+1\}^N$ such that $|\sigma| = k$. By the Local Limit Theorem, we have
$$
2^{-N}\nu_N(k) \leq \frac{C}{\sqrt N} \eee^{-\frac{k^2}{2N}}, \quad k\in\Z,
$$
for some absolute constant $C$, which here and in the sequel may change from line to line. By these considerations we arrive at
\begin{align*}
\left|\sum_{\sigma \in S_N^c} g\left( \frac{|\sigma|}{\sqrt N} \right) \E T(\sigma)\right|
&\leq C \|g\|_\infty
 \eee^{-\frac{\beta^2}8+ o(1)} \frac {2^N} {\sqrt N} \sum_{k\in\Z : k^2>N (Np)^{\frac 15}}
\eee^{\frac{k^2}{2N} \left(\beta-1 + \frac{2C_{N,1}}{N^2p^2}\right)}.
\end{align*}
Since $\beta<1$, and $C_{N,1}$ stays  bounded, there is $\delta>0$ such that
$
\beta-1 + \frac{2C_{N,1}}{N^2p^2} < -\delta
$
for all sufficiently large $N$. Therefore,
\begin{align*}
\left|\sum_{\sigma \in S_N^c} g\left( \frac{|\sigma|}{\sqrt N} \right) \E T(\sigma)\right|
&\leq  C \|g\|_\infty
 \eee^{-\frac{\beta^2}8+ o(1)} \frac {2^N} {\sqrt N} \sum_{k\in\Z : k^2>N (Np)^{\frac 15}}
\eee^{-\delta \frac{k^2}{2N}}
\\
&\leq C \|g\|_\infty
 \eee^{-\frac{\beta^2}8+ o(1)} 2^N \int_{(Np)^{1/11}}^{\infty} \eee^{-\delta t^2/2} d  t,
\end{align*}
by approximating the Riemann sum by an integral over a larger domain.
This proves the claim \eqref{eq:claim_untypical} since $Np \to +\infty$ as $N\to\infty$.

Finally, we prove that
\begin{equation}\label{eq:sum_g_exp_de_Moivre}
\lim_{N\to\infty} \frac 1 {2^N} \sum_{\sigma \in \{-1,+1\}^N} g\left( \frac{|\sigma|}{\sqrt N} \right)  \eee^{\frac{\beta}{2N}|\sigma|^2}
= \E_{\xi}[g(\xi)\eee^{\frac {\beta}{2}\xi^2}],
\end{equation}
where $\xi$ is a standard normal random variable. As in \cite{KaLS19}, proof of Theorem 1.4., this follows from
the Central Limit Theorem of de Moivre--Laplace applied to
$\frac{|\sigma|}{\sqrt N}$ together with the uniform integrability of the sequence
$g\left( \frac{|\sigma|}{\sqrt N} \right)  \eee^{\frac{\beta}{2N}|\sigma|^2}$ for $\beta<1$ (cf. ~\cite{Ellis-EntropyLargeDeviationsAndStatisticalMechanics}, Proof of Theorem V.9.4),
\end{proof}

We are now ready to prove that our guess about the size of the variance of $\tilde Z_N(\beta,g)$ was correct.
\begin{proposition}\label{prop:var}
For all $g \in \mathcal{C}^b(\R), g \ge 0, g \not\equiv 0$ we have
$$
\V \tilde Z_N(\beta,g) = o((\E \tilde Z_N(\beta,g))^2).
$$
\end{proposition}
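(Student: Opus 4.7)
The plan is to mirror the typical/atypical decomposition of Proposition~\ref{prop:expect} at the level of pairs. Start from
\[
\V \tilde Z_N(\beta,g) = \sum_{\sigma,\tau \in \{\pm 1\}^N} g\!\left(\tfrac{|\sigma|}{\sqrt N}\right) g\!\left(\tfrac{|\tau|}{\sqrt N}\right) \mathrm{Cov}(T(\sigma),T(\tau))
\]
and split the sum over $(\sigma,\tau) \in S_N$ and $(\sigma,\tau) \in S_N^c$. Proposition~\ref{prop:expect} gives $(\E \tilde Z_N(\beta,g))^2 \sim C \cdot 4^N$ for a positive constant $C=C(\beta,g)$, so it suffices to prove that each of the two pieces is $o(4^N)$.

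For the typical pairs, Corollary~\ref{cor:cov} supplies the uniform estimate $\mathrm{Cov}(T(\sigma),T(\tau)) = o(\E T(\sigma)\,\E T(\tau))$ on $S_N$. Bounding the typical contribution in absolute value and enlarging the sum back to all pairs $(\sigma,\tau)$ yields $o\bigl((\E \tilde Z_N(\beta,g))^2\bigr)$ directly.

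For the atypical pairs I would use $|\mathrm{Cov}(T(\sigma),T(\tau))| \le \E[T(\sigma)T(\tau)] + \E T(\sigma)\,\E T(\tau)$ and treat both summands analogously. Decompose $S_N^c = A_1 \cup A_2 \cup A_3$ according to which defining inequality of $S_N$ fails. On $A_1$ (and symmetrically $A_2$) the $\sum \E T(\sigma)\E T(\tau)$ contribution factorizes: the atypical-single factor is $o(2^N)$ by the argument in Proposition~\ref{prop:expect}, while the other factor is $O(2^N)$, giving $o(4^N)$. The corresponding joint-expectation contribution is handled by the explicit formula
\[
\E[T(\sigma)T(\tau)] = \exp\!\bigl(N^2 b_0 + b_1(|\sigma|^2+|\tau|^2) + b_{12}|\sigma\tau|^2\bigr)
\]
from the proof of Lemma~\ref{ETsTt}, together with the expansions $N^2 b_0 = -\tfrac{\beta^2}{4}+o(1)$, $b_1 = \tfrac{\beta}{2N} + O(\tfrac{1}{N^3 p^2})$, $b_{12} = O(\tfrac{1}{N^2 p})$ derived from Lemma~\ref{lem:exp_tanh}. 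Under $Np\to\infty$ and $\beta<1$ the effective exponent $(\beta+o(1))(|\sigma|^2+|\tau|^2)/(2N)+o(1)\cdot|\sigma\tau|^2/N$ remains strictly subcritical with respect to the Gaussian decay of the joint multinomial count for pairs with prescribed $(|\sigma|,|\tau|,|\sigma\tau|)$, so the tail estimate from Proposition~\ref{prop:expect} adapts to give $o(4^N)$.

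The genuinely new case is $A_3$, where both $\sigma,\tau$ lie in $B_N := \{|\sigma|^2 \le N(Np)^{1/5}\}$ but $|\sigma\tau|^2 > N(Np)^{1/5}$. For this I would fix $\sigma \in B_N$ and reparameterize $\tau_i = \sigma_i \eta_i$, under which $|\tau|=\sum_i \sigma_i \eta_i$ and $|\sigma\tau|=\sum_i \eta_i$. Splitting $\eta$ across the two blocks $\{i:\sigma_i = \pm 1\}$ turns the inner sum into a product of two independent simple random walks, and a local-limit-theorem computation factorizes it into a finite integral in the $|\tau|$-direction (where $\beta<1$ supplies integrability) times a vanishing Gaussian tail in the $|\sigma\tau|$-direction. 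Summing over $\sigma\in B_N$ against $\sum_\sigma g(|\sigma|/\sqrt N)\E T(\sigma) = O(2^N)$ again produces $o(4^N)$. The delicate point is the cross-term $b_{12}|\sigma\tau|^2$: pointwise it can be as large as $O(1/p)$ when $|\sigma\tau|\sim N$, which need not vanish. It is crucial that $b_{12}=O(\tfrac{1}{N^2 p}) = o(\tfrac{1}{N})$ under $Np\to\infty$, so this term merely perturbs the Gaussian exponent $\tfrac{1}{2N}$ of the joint local limit theorem for $(|\sigma|,|\tau|,|\sigma\tau|)/\sqrt N$ by a factor $o(1)$, preserving absolute summability and reducing the atypical estimate to a vanishing tail integral.
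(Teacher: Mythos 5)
Your proposal is correct and follows the same overall strategy as the paper: split the variance sum over $S_N$ and $S_N^c$, use Corollary~\ref{cor:cov} plus non-negativity of $g$ for the typical part, and for the atypical part use the explicit exponential formulas for $\E[T(\sigma)T(\tau)]$ and $\E T(\sigma)\E T(\tau)$ together with local-limit-theorem bounds and the sign conditions coming from $\beta<1$ and $Np\to\infty$. Where you diverge is in the treatment of $S_N^c$. The paper does not subdivide $S_N^c$ at all: it works directly with the counts $\nu_N(k,l,m) = \#\{(\sigma,\tau): |\sigma|=k, |\tau|=l, |\sigma\tau|=m\}$ and a three-dimensional local CLT (Davis--McDonald), yielding the uniform bound $\nu_N(k,l,m) \le C\, 2^{2N} N^{-3/2}\eee^{-(k^2+l^2+m^2)/(2N)}$; combined with \eqref{eq:cov_computation1}--\eqref{eq:cov_computation2}, both $\E[T(\sigma)T(\tau)]$ and $\E T(\sigma)\E T(\tau)$ acquire a net negative-definite quadratic exponent in $(k,l,m)$, and the atypical contribution is dominated by a single Riemann-sum-to-integral bound over $N^{-1/2}(k,l,m)\in D_N$ with $D_N\downarrow\emptyset$. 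Your $A_1\cup A_2\cup A_3$ decomposition and the reparameterization $\tau_i = \sigma_i\eta_i$ for $A_3$ --- which turns the $(|\tau|,|\sigma\tau|)$ count for fixed $\sigma$ into a product of two independent simple-random-walk counts --- is a legitimate alternative route to the same 3D Gaussian bound, and your identification of the delicate cross-term $b_{12}|\sigma\tau|^2 = O(|\sigma\tau|^2/(N^2p))$, which is killed precisely because $Np\to\infty$ makes $b_{12}-\tfrac{1}{2N}<0$, matches the paper's key observation. The trade-off is that the paper's single 3D LLT bound avoids casework and handles the three directions symmetrically at once, whereas your version is longer and the claim that ``the tail estimate from Proposition~\ref{prop:expect} adapts'' for $A_1$, $A_2$ quietly requires the same 3D count you later derive by hand for $A_3$ --- so if you flesh this out, you should state the joint LLT bound once up front, at which point the three cases collapse back into the paper's unified argument.
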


\begin{proof}
The idea of the proof is similar to the previous one.
Again we decompose the term of interest:
\begin{eqnarray}\label{eq:var_typical_atypical}
\V \tilde Z_N(\beta, g)
&=&
\sum_{(\sigma,\tau) \in S_N}g\left(\frac{|\sigma|}{\sqrt N}\right)g\left(\frac{|\tau|}{\sqrt N}\right)\mathrm{Cov}(T(\sigma),T(\tau))
\nonumber \\
&& \qquad+
\sum_{(\sigma,\tau) \in S_N^c} g\left(\frac{|\sigma|}{\sqrt N}\right)g\left(\frac{|\tau|}{\sqrt N}\right)\mathrm{Cov}(T(\sigma),T(\tau)).
\end{eqnarray}
We already saw in Corollary \ref{cor:cov} that for $(\sigma,\tau) \in S_N$ we have that
$\mathrm{Cov}(T(\sigma)T(\tau))=o(\E(T(\sigma)\E(T(\tau))$.  This, together with the fact that $g$ is bounded implies that
\begin{multline}\label{eq:var_main_term}
\sum_{(\sigma,\tau) \in S_N}g\left(\frac{|\sigma|}{\sqrt N}\right)g\left(\frac{|\tau|}{\sqrt N}\right)\mathrm{Cov}(T(\sigma),T(\tau))
\\
=o\left(\sum_{(\sigma,\tau) \in S_N}g\left(\frac{|\sigma|}{\sqrt N}\right)g\left(\frac{|\tau|}{\sqrt N}\right)\E(T(\sigma))\E(T(\tau))\right).
\end{multline}
On the other hand, since $g \ge 0$,
\begin{multline*}
\sum_{(\sigma,\tau) \in S_N}g\left(\frac{|\sigma|}{\sqrt N}\right)g\left(\frac{|\tau|}{\sqrt N}\right)\E (T(\sigma))\E (T(\tau))
\\
\leq \sum_{\sigma,\tau \in \{\pm1\}^N}g\left(\frac{|\sigma|}{\sqrt N}\right)g\left(\frac{|\tau|}{\sqrt N}\right)\E(T(\sigma))\E(T(\tau))
\end{multline*}
and the right hand side is $\E \tilde Z_N(\beta,g))^2$.

From the proof of Proposition \ref{prop:expect} we see that for any $\sigma, \tau$
\begin{equation}
\E [T(\sigma)] \E [T(\tau)]
=
\eee^{-\frac{\beta^2}4 + o(1)+\frac{\beta}{2} \frac {|\sigma|^2+|\tau|^2} N + \frac{C_{N,1}} {N^2p^2} \frac {|\sigma|^2 + |\tau|^2}N}, \label{eq:cov_computation1}
\end{equation}
where we recall that the sequence of constants $C_{N,1}$ does not depend on $\sigma$ and $\tau$ and is bounded.

Similarly, along the lines of the proof of Lemma \ref{ETsTt} we see that
\begin{align}
\E [T(\sigma)T(\tau)]
&=
\eee^{-
\frac{\beta^2}4+o(1) +\frac{|\sigma\tau|^2}{N}\frac{C_{N,2}}{Np}
+
\frac{|\sigma|^2+|\tau|^2}N \left(\frac \beta 2 + \frac{C_{N,3}}{N^2p^2} \right)
},\label{eq:cov_computation2}
\end{align}
where also the sequences of constants $C_{N,2}$ and $C_{N,3}$ do not depend on $\sigma$ and $\tau$ and stay bounded.

To treat the second sum on the right hand side of \eqref{eq:var_typical_atypical}
denote by $V_N(k,l,m)$ the set of pairs
$$(\sigma,\tau)\in \{-1,+1\}^N\times\{-1,+1\}^N \quad \mbox{for which }|\sigma|=k,|\tau|= l, \mbox{and }|\sigma \tau| = m.$$
Moreover, by $\nu_N(k,l,m) = \# V_N(k,l,m)$ we denote the number of such pairs.
Again, we want to apply a Local Limit Theorem. To this end, if $\sigma$ and $\tau$ are taken independently and uniformly at random from $\{-1,+1\}^N$, the vectors $(\sigma_i,\tau_i,\sigma_i\tau_i)$, $1\leq i\leq N$, are i.i.d.
Their mean is $0$ and their covariance matrix is the $3\times 3$ identity matrix because
$$
\sigma_i (\sigma_i\tau_i) = \tau_i, \quad \tau_i (\sigma_i\tau_i) = \sigma_i, \quad \sigma_i^2 = \tau_i^2 = (\sigma_i\tau_i)^2 = 1.
$$
The three-dimensional Local Central Limit Theorem~\cite{Davis1995} tells us that there is a universal constant $C$ such that
$$
\nu_N(k,l,m) \leq C 2^{2N} N^{-3/2} \eee^{-\frac{k^2}{2N} - \frac{l^2}{2N} - \frac{m^2}{2N}}, \quad (k,l,m)\in\Z^3.
$$
Combining this with ~\eqref{eq:cov_computation2} we see that for any $(k,l,m)\in\Z^3$,
\begin{multline*}
2^{-2N}\sum_{(\sigma,\tau) \in V_N(k,l,m)}
\E [T(\sigma) T(\tau)]
\leq \\
CN^{-3/2}
\exp\left(-
\frac{\beta^2}4+o(1) +\frac{m^2}{2N}\left(\frac{2C_{N,2}}{Np}-1\right)
+
\frac{k^2+l^2}N \left(\frac {\beta-1} 2 + \frac{C_{N,3}}{N^2p^2} \right)
\right).
\end{multline*}

Since $\beta<1$, we obtain for some $\delta >0$ and $N$ large enough
\begin{equation*}
2^{-2N}\sum_{(\sigma,\tau) \in V_N(k,l,m)}
\E [T(\sigma) T(\tau)]
\leq
CN^{-3/2}
\eee^{-\frac{\beta^2}4+o(1)}
\eee^{-\delta \frac{k^2+l^2 + m^2} {2N}}.
\end{equation*}
In a very similar way we can bound the sum of the $\E [T(\sigma)]\E[T(\tau)]$-terms over $(\sigma,\tau) \in V_N(k,l,m)$.
Employing the same notation as in the previous step we obtain
\begin{align*}
&2^{-2N}\sum_{(\sigma,\tau) \in V_N(k,l,m)}
\E [T(\sigma)] \E[T(\tau)]\leq C   N^{-3/2} \eee^{-\frac{k^2}{2N} - \frac{l^2}{2N} - \frac{m^2}{2N}-\frac{\beta^2}{4}+o(1) + \frac{\beta}{2} \frac {k^2+l^2} N + \frac {C_{N,1}}{N^2p^2}  \frac {k^2 + l^2}N}
\\
&\leq CN^{-3/2}
\eee^{-\frac{\beta^2}{4}+o(1)}
\eee^{-\delta \frac{k^2+l^2 + m^2} {2N}}
\end{align*}
for some $\delta >0$ and $N$ sufficiently large.

We can therefore conclude,
$$
2^{-2N}\sum_{(\sigma,\tau) \in V_N(k,l,m)}
\left| \Cov (T(\sigma), T(\tau))\right|
\leq
CN^{-3/2}
\eee^{-\frac{\beta^2}{4}+o(1)}
\eee^{-\delta \frac{k^2+l^2 + m^2} {2N}}
$$
for some $\delta >0$ and $N$ sufficiently large.
Recalling that the function $g$ is bounded, we arrive at
\begin{multline*}
\left|\sum_{(\sigma,\tau) \in S_N^c}
g\left(\frac{|\sigma|}{\sqrt N}\right) g\left(\frac{|\tau|}{\sqrt N}\right)
 \Cov (T(\sigma), T(\tau)) \right|
\\
\leq
C \|g\|_\infty^2 2^{2N} \eee^{-\frac{\beta^2}{4}+o(1)}  N^{-3/2}
\sum_{\substack{(k,l,m)\in\Z^3\\N^{-1/2}(k,l,m)\in D_N}} \eee^{-\delta \frac{k^2+l^2 + m^2} {2N}},
\end{multline*}
where $D_N:=\{(x,y,z)\in\R^3\colon |x|>(Np)^{\frac 1{10}} \text{ or } |y|>(Np)^{\frac 1{10}} \text{ or } |z|>(Np)^{\frac 1{10}} \}$.
The sum on the right hand side can again be considered as a Riemann sum, which can be bounded from above by the corresponding
integral over a larger domain. Including the pre-factor $N^{-3/2}$ this yields
$$
 N^{-3/2}
\sum_{\substack{(k,l,m)\in\Z^3\\N^{-1/2}(k,l,m)\in D_N}} \eee^{-\delta \frac{k^2+l^2 + m^2} {2N}}
\leq
\int_{\frac 13 D_N} \eee^{-\delta \frac{x^2+y^2 + z^2} {2}} d x\, d y \,  d z  = o(1),
$$
because $Np \to \infty$ as $N\to\infty$. Combining these considerations with Proposition \ref{prop:expect} we obtain
$$
\sum_{(\sigma, \tau) \in S_N^c} g\left(\frac{|\sigma|}{\sqrt N}\right) g\left(\frac{|\tau|}{\sqrt N}\right) \Cov (T(\sigma), T(\tau))
=
o(1)\cdot (\E \tilde Z_N(\beta, g))^2.
$$
Finally, the assertion follows from a combination of the estimates for the typical configurations and the atypical configurations.
\end{proof}

We are now ready to prove Theorem \ref{theo:high_temp_clt}.

\begin{proof}[Proof of Theorem \ref{theo:high_temp_clt}]
Proposition \ref{prop:var} shows that, as long as $Np\to \infty$, we have that $\V(\tilde Z_N(\beta,g))= o((\E \tilde Z_N(\beta, g))^2)$ for all non-negative $g \in \mathcal{C}_b(\R)$, $g\not  \equiv 0$.
This immediately implies
\begin{equation}\label{eq:in_probab}
\frac{ \tilde Z_N(\beta, g)}{\E \tilde Z_N(\beta, g)} \to 1
\end{equation}
in $L^2$, for all non-negative $g \in \mathcal{C}_b(\R)$, $g\not  \equiv 0$.

By Chebyshev's inequality, this convergence holds in probability, as well.

Moreover, consider the quantity $Z_N(\beta,g)$ defined in \eqref{ZN(g)}. Note that
\begin{eqnarray}\label{eq:ZN_tilde_ZN}
Z_N(\beta,g)&=& \sum_{\sigma \in \{-1, +1\}^N} \eee^{-\beta H(\sigma)} g\left( \frac{\sum_{i=1}^N \sigma_i}{\sqrt N}\right)\nonumber \\
&=& \sum_{\sigma \in \{-1, +1\}^N}g\left( \frac{\sum_{i=1}^N \sigma_i}{\sqrt N}\right)T(\sigma) \eee^{\sum_{i,j} \vep_{i,j} \log \cosh(\gamma)}\nonumber \\
&=& \tilde Z_N(\beta,g)\eee^{\sum_{i,j} \vep_{i,j} \log \cosh(\gamma)}.
\end{eqnarray}

Recall that for the convergence of the random probability measure $L_N$ defined in~\eqref{eq:def_L_N}
we need to consider its integral against all non-negative $g \in \mathcal{C}_b(\R)$.
However, we have that
$$
\int_{-\infty}^{+\infty} g(x) L_N(d x)
=
\E_{\mu_\beta }\left[ g\left( \frac{\sum_{i=1}^N \sigma_i}{\sqrt N} \right)\right]
=
\frac{Z_N(\beta, g)}{Z_N(\beta)}.
$$
But from \eqref{eq:ZN_tilde_ZN} we obtain
$$
\frac{Z_N(\beta, g)}{Z_N(\beta)}= \frac{\tilde Z_N(\beta, g)}{\tilde Z_N(\beta,1)}
$$

It follows from~\eqref{eq:in_probab} and Proposition \ref{prop:expect} that
$$
\lim_{N\to\infty}\int_{-\infty}^{+\infty} g(x) L_N(d x)
=
\lim_{N\to\infty} \frac{\E Z_N(\beta, g)}{\E Z_N(\beta)}
=
\sqrt{1-\beta}\, \E_\xi [g(\xi) \eee^{\frac \beta 2 \xi^2}]
$$
in probability, where $\xi$ is a standard normally distributed random variable.

However, the right-hand side of this equation is nothing but $\int_{-\infty}^{+\infty} g(x) \phi_{0, \frac 1 {1-\beta}}(x) dx$, where
$\phi_{0, \frac 1 {1-\beta}}(x)$ is the density of a normal distribution with mean $0$ and variance $1/(1-\beta)$.
Hence we have shown that $L_N$, considered as a random element of the space of probability measures $\mathcal M(\R)$, converges in probability to a normal distribution with mean $0$ and variance $1/(1-\beta)$, considered as a deterministic point in $\mathcal M(\R)$. This was the assertion of Theorem \ref{theo:high_temp_clt}.
\end{proof}

\section{Proof of Theorem \ref{theo:crit_temp_clt}}
The core idea in the proof of Theorem \ref{theo:crit_temp_clt} is similar to the proof in the previous section. Therefore, we will use most of the definitions introduced there.
However, as can be seen from the result, the relevant spin configurations are no longer those, where $|\sigma|^2$ is of order $N$ (and the same for $|\tau|^2$). We therefore introduce two new sets of configurations:
\begin{eqnarray}\label{eq:typical_alt1}
R_N^1&:=&\left\{(\sigma, \tau) \in \{\pm 1\}^N \times \{\pm 1\}^N: |\sigma|^2 \le N^{3/2} h(Np),|\tau|^2\le N^{3/2} h(Np),
\right. \nonumber \\
&&\quad \left. |\sigma \tau|^2 \le N (Np)^{1/5}\right\}.
\end{eqnarray}
Here $h(x)$ is an increasing function going to infinity in such a way that $\frac{h(Np)}{N^{3/2}p^2} \to 0$
and $(Np)^{1/10} / h(Np) \to \infty$ as $N \to \infty$.
The regime $R_N^1$ will be relevant, whenever $\beta=1$ and $p^4 N^3 \not \to 0$.

In the regime when $p^4 N^3  \to 0$ we define
\begin{eqnarray}\label{eq:typical_alt2}
R_N^2&:=&\left\{(\sigma, \tau) \in \{\pm 1\}^N \times \{\pm 1\}^N: |\sigma|^2 \le N (Np)^{2}h'(Np),|\tau|^2\le N (Np)^{2}h'(Np),
\right. \nonumber \\
&&\quad \left. |\sigma \tau|^2 \le N (Np)^{1/5}\right\}.
\end{eqnarray}
Here $h'(x)$ is an increasing function going to infinity in such a way that
$$
N (Np)^{2}h'(Np) = o(N^{3/2}),\;\; h'(Np) = o((Np)^2) \quad \mbox{ and }\eee^{-\frac 1{120} h'(Np)} =o(N^{3/4}p).
$$
We are able to find $h'$ such that the third condition is fulfilled, because we can take $h'(Np)=\frac{N^{3/2}}{N^3 p^2}h''(Np)= \frac{1}{N^{3/2} p^2}h''(Np)$ with an $h''$ that goes to $0$ arbitrarily slowly. On the other hand, indeed $\eee^{-\frac{1}{N^{3/2} p^2}} =o(N^{3/4}p)$.
Finally, we also require that $\frac{(Np)^2}{\sqrt N} h'(Np)=o( (Np)^{\frac 1{10}})$. This is also possible, because $R_N^2$ becomes relevant, when $p^4 N^3  \to 0$, hence, when $p =o(N^{-\frac 34})$, hence $Np=o(N^{-\frac 14})$. So again, setting
$h'(Np)=\frac{N^{3/2}}{N^3 p^2}h''(Np)$ we require that $h''(Np)$ grows slower than $(Np)^{\frac 1{10}}$, which is in agreement with our above choices.

$R_N^1$ will be the set of the typical pairs of spin configurations for the proof of the first and the second part of the theorem, while $R_N^2$ will play the same role in the proof of the third part of the theorem.
Thus, whenever we consider the set $R_N^1$ we will tacitly assume that $p^4 N^3 \not \to 0$, while, when we use $R_N^2$, we will think of $p$ being such that $p^4 N^3 \to 0$.
So these sets will replace the set $S_N$ from the previous section.

For the rest of this section we will keep $\beta=1$ fixed.

Our first step is an analogue  of Lemma \ref{ET}. Note, however, that the error term in the subsequent lemma is uniform over all $\sigma$'s rather than only over the typical ones.

\begin{lemma}\label{ETcrit}
Let $\beta=1$. For all $p=p(N)$ such that $pN \to \infty$ and all $\sigma \in\{-1,+1\}^N$ we have  
\begin{equation}\label{eq:ETcrit}
\E T(\sigma)=\exp\left( -\frac{1}{8}+p \tanh(\gamma)|\sigma|^2 + o(1)\right)
\end{equation}
with $o(1)$-term that is uniform over $\sigma \in \{-1,+1\}^N$. 
\end{lemma}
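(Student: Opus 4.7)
The strategy is to repeat the computation from the proof of Lemma~\ref{ET} verbatim, only tracking the error terms more carefully so as to keep the full $p\tanh(\gamma)|\sigma|^2$ contribution (not its linearization $\tfrac{1}{2N}|\sigma|^2$) and to establish uniformity over all $\sigma$, not just typical ones. The key new input is that at $\beta=1$, with $\gamma=1/(2Np)$, the error terms appearing in the expansions of Lemma~\ref{lem:exp_tanh} become small enough even when $|\sigma|^2$ is allowed to be as large as $N^2$.

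Concretely, I would first imitate the opening of the proof of Lemma~\ref{ET}: using independence of the $\vep_{i,j}$ and the identity $\sigma_i \sigma_j \in \{\pm 1\}$, write $\log(1 - p + p\eee^{\gamma \sigma_i\sigma_j - \log\cosh(\gamma)}) = a_0 + a_1 \sigma_i \sigma_j$ with
\[
a_0 = \tfrac{1}{2}(F_1(1,p,\gamma) + F_1(1,p,-\gamma)), \qquad a_1 = \tfrac{1}{2}(F_1(1,p,\gamma) - F_1(1,p,-\gamma)),
\]
so that $\E T(\sigma) = \exp(N^2 a_0 + a_1 |\sigma|^2)$.

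The constant term is handled exactly as before: by \eqref{f1A}, $a_0 = -\tfrac{1}{2}p^2\gamma^2 + \mathcal{O}(p^2\gamma^4)$, so at $\beta=1$,
\[
N^2 a_0 = -\tfrac{1}{8} + \mathcal{O}(N^2 p^2\gamma^4) = -\tfrac{1}{8} + \mathcal{O}\bigl((Np)^{-2}\bigr) = -\tfrac{1}{8} + o(1).
\]
For the spin-dependent term, the new point is that instead of Taylor expanding $\tanh(\gamma)$ we keep it untouched. By \eqref{f6A}, $a_1 = p\tanh(\gamma) + \mathcal{O}(p^2\gamma^3)$, and since $|\sigma|^2 \le N^2$ for every $\sigma\in\{\pm 1\}^N$, at $\beta=1$
\[
\bigl|a_1 - p\tanh(\gamma)\bigr|\cdot |\sigma|^2 \;\le\; C\, p^2\gamma^3 N^2 \;=\; C\cdot \frac{1}{8Np} \;=\; o(1),
\]
uniformly in $\sigma$. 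Collecting the two estimates gives $\E T(\sigma) = \exp\bigl(-\tfrac{1}{8} + p\tanh(\gamma)|\sigma|^2 + o(1)\bigr)$, with the $o(1)$ uniform over all $\sigma\in \{\pm 1\}^N$.

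There is essentially no conceptual obstacle here beyond the bookkeeping: the proof is a direct adaptation of Lemma~\ref{ET}. The only subtlety worth flagging is the choice not to Taylor expand $\tanh(\gamma)$ further, since at $\beta=1$ the subleading correction $\tfrac{1}{3}p\gamma^3 |\sigma|^2$ is \emph{not} negligible once $|\sigma|^2$ reaches the critical scale $N^{3/2}$; keeping the full $\tanh$ absorbs this term and is precisely what will allow Lemma~\ref{ETcrit} to feed into the quartic exponential appearing in Theorem~\ref{theo:crit_temp_clt}.
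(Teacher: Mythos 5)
Your proof is correct and takes essentially the same approach as the paper: both follow the proof of Lemma~\ref{ET}, using \eqref{f6A} to write $a_1 = p\tanh(\gamma) + \mathcal{O}(p^2\gamma^3)$ and then bounding the error by $p^2\gamma^3 N^2 = 1/(8Np) \to 0$ uniformly over all $\sigma$. Your closing remark about why one must retain the full $\tanh(\gamma)$ rather than its linearization is exactly the point, and is consistent with how the lemma is subsequently used.
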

\begin{proof}[Proof of Lemma \ref{ETcrit}]
The proof is almost verbatim the same as the proof of Theorem~\ref{ET}. We will follow this proof and write
$$
\E T(\sigma)
=\exp\left(\sum_{i,j=1}^N f(\sigma_i \sigma_j)\right)
$$
with
$$
f(x)= \log (1-p + p\eee^{\gamma x-\log \cosh(\gamma)})=F_1(x,p,\gamma).
$$
Since $\sigma_i\in \{\pm 1\}$ for all $i$, we can rewrite $f$ as
$$
f(x) = a_0 + a_1 x, \qquad x \in \{-1,+1\}.
$$

The only difference is that we expand the expression
$$
a_1
=\frac{F_1(1,p,\gamma)-F_1(1,p,-\gamma)}{2}.
$$
using Lemma \ref{lem:exp_tanh}, in particular, \eqref{f6A}. This gives
$$
a_1 |\sigma|^2 = p \tanh(\gamma)|\sigma|^2  + \mathcal{O}(p^2 \gamma^3 |\sigma|^2 )
$$
Let us have a closer look at the $\mathcal{O}(p^2 \gamma^3 |\sigma|^2 )$-term of this expansion. Since $|\sigma|^2$ is at most $N^2$ and $\gamma=\frac{1}{2Np}$ for $\beta=1$ we see that
$p^2 \gamma^3 |\sigma|^2 \le \frac 1 {8 Np}$ which goes to $0$ uniformly for all $\sigma \in \{-1,+1\}^N$.
\end{proof}
The following is the analogue of Lemma \ref{ETsTt}. Again, the error term is uniform over all pairs $(\sigma,\tau)\in \{\pm 1\}^N \times \{\pm 1\}^N$.

\begin{lemma}\label{ETsTtcrit}
Let $\beta=1$. For $p=p(N)$ such that $pN \to \infty$ and any $\sigma, \tau \in \{-1,+1\}^N$ 
\begin{equation*}
\E\left[ T(\sigma)T(\tau)\right]
=
\exp\left(-\frac{1}{4}+ p \tanh(\gamma)(|\sigma|^2+|\tau|^2)+p(1-p)\tanh^2(\gamma)|\sigma\tau|^2+o(1)
\right)
\end{equation*}
uniformly over all $(\sigma, \tau) \in \{-1,+1\}^N$. 
\end{lemma}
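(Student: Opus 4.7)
The plan is to follow the same algebraic skeleton as in the proof of Lemma \ref{ETsTt}, but to \emph{retain} the $\tanh(\gamma)$ and $\tanh^2(\gamma)$ factors rather than expanding them as we did for $\beta<1$, and to check that the remainders are controlled \emph{uniformly} over all $(\sigma,\tau)\in\{\pm 1\}^N\times\{\pm 1\}^N$ rather than only on the typical set $S_N$. This uniformity is exactly what will let us handle the sums over atypical configurations needed in the proof of Theorem \ref{theo:crit_temp_clt}, where the relevant scale of $|\sigma|^2$ is $N^{3/2}h(Np)$, much larger than the $N(Np)^{1/5}$ allowed by $S_N$.

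First I would reproduce the factorization
$$\E[T(\sigma)T(\tau)]=\prod_{i,j=1}^N \E\!\left[\eee^{\gamma(\sigma_i\sigma_j+\tau_i\tau_j)\vep_{i,j}-2\vep_{i,j}\log\cosh\gamma}\right]=\exp\!\left(\sum_{i,j=1}^N g(\sigma_i\sigma_j+\tau_i\tau_j)\right),$$
with $g(x)=F_2(x,p,\gamma)$. Because $\sigma_i\sigma_j+\tau_i\tau_j\in\{-2,0,2\}$, the same $4\times 4$ linear system as in Lemma \ref{ETsTt} yields the decomposition $g(x_1+x_2)=b_0+b_1 x_1+b_2 x_2+b_{12}x_1x_2$ with $b_1=b_2$, and hence
$$\sum_{i,j=1}^N g(\sigma_i\sigma_j+\tau_i\tau_j)=N^2 b_0+b_1(|\sigma|^2+|\tau|^2)+b_{12}|\sigma\tau|^2.$$
The coefficients $b_0,b_1,b_{12}$ are the same linear combinations of $F_2(2,p,\pm\gamma)$ and $F_2(0,p,\gamma)$ as before.

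The critical step is where the proof departs from Lemma \ref{ETsTt}: I apply \eqref{f7A}, \eqref{f8A}, \eqref{f3A} of Lemma \ref{lem:exp_tanh} \emph{without} expanding $\tanh(\gamma)$. This gives $b_1=p\tanh(\gamma)+\mathcal O(p^2\gamma^3)$, $b_{12}=p(1-p)\tanh^2(\gamma)+\mathcal O(p^3\gamma^3)$, and $b_0=-p^2\gamma^2+\mathcal O(p^2\gamma^4)$. Since $\beta=1$ gives $\gamma=\frac{1}{2Np}$, the trivial bound $|\sigma|^2,|\tau|^2,|\sigma\tau|^2\le N^2$ is enough to make every remainder uniformly $o(1)$:
\begin{align*}
N^2 b_0 &= -N^2 p^2\gamma^2+\mathcal O(N^2 p^2\gamma^4)=-\tfrac14+\mathcal O\!\left(\tfrac{1}{N^2p^2}\right),\\
|b_1||\sigma|^2 \text{ error} &\le \mathcal O(p^2\gamma^3 N^2)=\mathcal O\!\left(\tfrac{1}{Np}\right),\\
|b_{12}||\sigma\tau|^2 \text{ error} &\le \mathcal O(p^3\gamma^3 N^2)=\mathcal O\!\left(\tfrac{1}{N}\right),
\end{align*}
all of which tend to $0$ as $Np\to\infty$, uniformly in $(\sigma,\tau)$. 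Exponentiating gives the claimed identity.

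The only mild obstacle here is the bookkeeping of uniformity: one has to be sure that the $\mathcal O$-constants in the expansions of Lemma \ref{lem:exp_tanh} do not depend on $(\sigma,\tau)$, which is immediate since those expansions are purely analytic identities in $(p,\gamma)$. No further analysis is needed --- keeping the $\tanh$ forms is essential because in Theorem \ref{theo:crit_temp_clt} the relevant spin scale $|\sigma|^2\sim N^{3/2}$ combined with $p\tanh\gamma\sim\frac{1}{2N}$ produces precisely the quadratic term $\tfrac{1}{\sqrt N}$-like contribution needed later, and the $p(1-p)\tanh^2\gamma\sim\frac{1}{4N^2p}$ factor on $|\sigma\tau|^2$ is what distinguishes the three regimes $p^4N^3\to\infty$, $\to 1/c^2$, $\to 0$.
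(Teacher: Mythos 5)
Your proof is correct and follows essentially the same route as the paper: the same factorization into $\exp(\sum_{i,j} g(\sigma_i\sigma_j+\tau_i\tau_j))$, the same representation $g(x_1+x_2)=b_0+b_1x_1+b_2x_2+b_{12}x_1x_2$, the same application of \eqref{f7A}, \eqref{f8A}, \eqref{f3A} keeping the $\tanh$ factors, and the same observation that the trivial bound $|\sigma|^2,|\tau|^2,|\sigma\tau|^2\le N^2$ together with $\gamma=\tfrac{1}{2Np}$ makes every remainder uniformly $o(1)$. Your bookkeeping is in fact slightly sharper than the paper's (you read off $\mathcal{O}(p^3\gamma^3)$ for the $b_{12}$ remainder directly from \eqref{f8A}, giving $\mathcal{O}(1/N)$, where the paper only claims the weaker $\mathcal{O}(p^2\gamma^3)$, giving $\mathcal{O}(1/(Np))$), and you make the $b_0$ computation explicit where the paper leaves it implicit; both are fine.
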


\begin{proof}[Proof of Lemma \ref{ETsTtcrit}]
The proof follows the idea of the proof of Lemma \ref{ETsTt}. We will apply similar changes as those that were made when going from the proof of Lemma \ref{ET} to the proof of Lemma \ref{ETcrit}.
Following the proof of Lemma \ref{ETsTt} we again write
\begin{eqnarray*}
\E\left[ T(\sigma) T(\tau)\right]&=&
\prod_{i,j=1}^N \E \left[ \eee^{\gamma (\sigma_i \sigma_j+\tau_i \tau_j)\vep_{i,j} -2\vep_{i,j} \log \cosh(\gamma)}\right]
= \E\exp\left(\sum_{i,j=1}^N g(\sigma_i \sigma_j+ \tau_i \tau_j)\right),
\end{eqnarray*}
with
$$
g(x):= \log(1-p+p\eee^{\gamma x-2 \log \cosh (\gamma)})= F_2(x,p,\gamma).
$$
We again represent $g$ in the form
$$
g(x_1+x_2)= b_0 + b_1 x_1 + b_2 x_2 + b_{12} x_1 x_2,
\quad x_1,x_2\in \{-1,+1\}.
$$

We now expand the $b_1$ and $b_2$ using \eqref{f7A} and \eqref{f8A} to obtain
$$
b_1=b_2=p\tanh(\gamma) +\mathcal{O}(p^2\gamma^3)
$$
and
$$
b_{12}= p(1-p)\tanh^2(\gamma)+ \mathcal{O}(p^2\gamma^3)
$$
Again, $p^2 \gamma^3 |\sigma|^2 \le \frac 1 {8 Np}$ and $p^2\gamma^3 |\sigma\tau|^2\le \frac 1 {8 Np}$ go to $0$ uniformly over all $(\sigma,\tau)$. This proves the desired statement.
\end{proof}

We are now ready to compute expectations of $g \in  \mathcal{C}^b(\R)$ with respect to the Gibbs measure. Here the different regimes of $p$ occur. We will adopt our definition of $\tilde Z_N(1,g)$ to our requirements.
For $g \in \mathcal{C}^b(\R), g \ge 0, g \not\equiv 0$ define
\begin{equation}\label{Z'N(g)}
\tilde Z_N^1(\beta, g):= \sum_{\sigma \in \{-1, +1\}^N} g\left(\frac{|\sigma|}{N^{3/4}}\right)T(\sigma)
\end{equation}
and 
\begin{equation}\label{overlineZN(g)}
\tilde Z_N^3 (\beta, g):= \sum_{\sigma \in \{-1, +1\}^N} g\left(\frac{|\sigma|}{\sqrt{N^3p^2}}\right)T(\sigma).
\end{equation}

\begin{proposition}\label{prop:expectcrit}
At $\beta=1$ and for all $g \in \mathcal{C}^b(\R), g \ge 0, g \not\equiv 0$ we have
\begin{enumerate}[label=(\alph*)]
\item \label{expectcrit:eins} If $p^4N^3\to \infty$ we have
\begin{equation}
\lim_{N \to \infty} \frac{\E \tilde Z_N^1(\beta, g)}{2^N N^{1/4} \int_{-\infty}^{+\infty} g(x) \eee^{-\frac{1}{12} x^4}d x}= \frac1{\eee^{\frac{1}{8}} \sqrt{2\pi}}.
\end{equation}
\item \label{expectcrit:zwei} If $p^4 N^3 \to \frac {1}{c^2} \in (0, \infty)$ we have
\begin{equation}
\lim_{N \to \infty} \frac{\E \tilde Z_N^1(\beta, g)}{2^N N^{1/4}  \int_{-\infty}^{+\infty} g(x) \eee^{-c \frac{x^2}{24} -\frac{1}{12} x^4}d x}=\frac1{\eee^{\frac{1}{8}} \sqrt{2\pi}}.
\end{equation}
\item \label{expectcrit:drei} If $p^4 N^3 \to 0$ and $pN \to \infty$, then we have
\begin{equation}
\lim_{N \to \infty} \frac{\E \tilde Z_N^3(\beta, g)}{2^N Np \, \int_{-\infty}^{\infty}g(x) \eee^{-\frac 1{24} x^2} d x}=\frac1{\eee^{\frac{1}{8}} \sqrt{2\pi}}.
\end{equation}
\end{enumerate}
\end{proposition}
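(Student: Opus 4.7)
I would handle all three cases in parallel, since they share a common mechanism and differ only in the scaling applied at the end. The starting point is the uniform expansion in Lemma \ref{ETcrit}, which at $\beta=1$ gives
$$\E T(\sigma) = \exp\bigl(-\tfrac18 + p\tanh(\gamma)|\sigma|^2 + o(1)\bigr)$$
uniformly in $\sigma$. The essential move is to Taylor-expand $p\tanh(\gamma) = p\gamma - \tfrac13 p\gamma^3 + O(p\gamma^5)$ with $\gamma = 1/(2Np)$, obtaining
$$p\tanh(\gamma) = \frac{1}{2N} - \frac{1}{24\,N^3 p^2} + O\bigl(\tfrac{1}{N^5 p^4}\bigr).$$
The leading $\tfrac{|\sigma|^2}{2N}$ piece is what prevents a direct use of the de\,Moivre--Laplace CLT at $\beta=1$ (it blows up the Gaussian density); the subleading piece $-\tfrac{|\sigma|^2}{24N^3 p^2}$ is exactly the source of the new universal behaviour in the three regimes.

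\textbf{Combining with sharp Stirling.} Writing the sum over $\sigma$ as a sum over $k=|\sigma|\in\{-N,-N+2,\ldots,N\}$ weighted by $\nu_N(k)=\binom{N}{(N+k)/2}$, I would use the refined local CLT / Stirling asymptotics
$$2^{-N}\nu_N(k) = \sqrt{\tfrac{2}{\pi N}}\,\exp\Bigl(-\tfrac{k^2}{2N}-\tfrac{k^4}{12\,N^3}+O\bigl(\tfrac{1}{N}+\tfrac{k^6}{N^5}\bigr)\Bigr),$$
valid uniformly for $|k|\le N^{5/6}$ say. Plugging this in, the $-k^2/(2N)$ from Stirling and the $+k^2/(2N)$ coming from $p\tanh(\gamma)|\sigma|^2$ exactly cancel, so for $\sigma$ in the appropriate typical set one obtains
$$\sum_{|\sigma|=k}\E T(\sigma) = 2^N\sqrt{\tfrac{2}{\pi N}}\,e^{-1/8}\exp\Bigl(-\tfrac{k^4}{12N^3}-\tfrac{k^2}{24N^3p^2}+o(1)\Bigr)\ind_{N+k\text{ even}}.$$

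\textbf{Rescaling and Riemann sum.} For (a) and (b) I would substitute $x=k/N^{3/4}$, so $k^4/(12N^3)=x^4/12$ and $k^2/(24N^3p^2)=x^2/(24N^{3/2}p^2)$. By hypothesis $N^{3/2}p^2=(N^3p^4)^{1/2}$ tends to $\infty$ in case (a) and to $1/c$ in case (b), giving the respective quadratic coefficient $0$ and $c/24$. Approximating the sum over the typical range $k^2\le N^{3/2}h(Np)$ by a Riemann integral with step $2/N^{3/4}$ in $x$ yields
$$\E\tilde Z^1_N(1,g)\sim \frac{2^N\,N^{1/4}}{\sqrt{2\pi}\,e^{1/8}}\int g(x)\,e^{-x^4/12-\phi(x)}\,dx,$$
with $\phi\equiv 0$ in (a) and $\phi(x)=cx^2/24$ in (b). For (c) I would instead set $x=k/\sqrt{N^3p^2}$, so that $k^2/(24N^3p^2)=x^2/24$ while $k^4/(12N^3)=\tfrac{1}{12}p^4N^3 x^4\to 0$. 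Converting the sum to an integral with step $2/\sqrt{N^3p^2}$ produces the prefactor $\tfrac{2^N\cdot Np}{\sqrt{2\pi}\,e^{1/8}}$ and the Gaussian $e^{-x^2/24}$.

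\textbf{Atypical configurations and the main obstacle.} The principal technical hurdle is to show that spins outside $R_N^1$ (resp.\ $R_N^2$) contribute negligibly, because the leading $\tfrac{|\sigma|^2}{2N}$ terms in $\E T(\sigma)$ and the Stirling bound only cancel to leading order — so one has to rely on the next correction for decay. Using $\nu_N(k)\le C\,2^N N^{-1/2}e^{-k^2/(2N)}$ and the same refined Stirling, the residual exponent for atypical $k$ is $-\tfrac{k^4}{12N^3}-\tfrac{k^2}{24N^3p^2}+o(1)$. The functions $h$ and $h'$ were chosen precisely so that this quantity tends to $-\infty$ on the atypical set: in (a)/(b), $|k|^2\ge N^{3/2}h(Np)$ forces $k^4/(12N^3)\ge h(Np)^2/12\to\infty$; in (c), $|k|^2\ge N^3p^2\,h'(Np)$ forces $k^2/(24N^3p^2)\ge h'(Np)/24\to\infty$. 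A routine comparison with the corresponding integral (tail of a Gaussian or of $e^{-x^4/12}$) shows the tail sum is $o(2^N N^{1/4})$ resp.\ $o(2^N Np)$, which, combined with the typical-range asymptotics above, gives the three claimed limits.
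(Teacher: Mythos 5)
Your approach matches the paper's proof: Lemma \ref{ETcrit}, the expansion $p\tanh(\gamma) = \tfrac{1}{2N} - \tfrac{1}{24N^3p^2} + O(N^{-5}p^{-4})$, a refined Stirling/local-CLT estimate on the typical range, exact cancellation of the quadratic-in-$k$ terms, and Riemann-sum convergence to identify the limiting density under the three scalings. The only point where you are cursory is the atypical tail: the expansion $2^{-N}\nu_N(k)=\sqrt{2/(\pi N)}\exp(-k^2/(2N)-k^4/(12N^3)+O(1/N+k^6/N^5))$ you invoke is valid only for $|k|$ appreciably smaller than $N$, so near $|k|=N$ you should instead use the one-sided inequality $NI(k/N)\ge \frac{k^2}{2N}+\frac{k^4}{12N^3}$ (which holds for \emph{all} $|k|\le N$ because every Taylor coefficient of $I$ is nonnegative) together with the crude bound $e^{-\lambda_N(k)}\le \sqrt N$ on the prefactor, as the paper does by splitting the tail at $k^2=N^2/4$; once that is made explicit your "routine comparison with the corresponding integral" goes through exactly as you describe.
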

\begin{proof}
We start by proving Part~\ref{expectcrit:eins}.
Similar to the proof of Proposition \ref{prop:expect} we write
\begin{equation}\label{eq:expect_typical_atypical_crit}
\E \tilde Z_N^1(\beta, g)
=
\sum_{\sigma \in R_N^1}g\left(\frac{|\sigma|}{N^{3/4}}\right)\E T(\sigma)
+
\sum_{\sigma \notin R_N^1} g\left(\frac{|\sigma|}{N^{3/4}}\right)\E T(\sigma).
\end{equation}

For $\sigma\in R_N^1$ we can use the result of Lemma~\ref{ETcrit} to obtain:
\begin{eqnarray*}
\sum_{\sigma \in R_N^1}g\left(\frac{|\sigma|}{N^{3/4}}\right)\E T(\sigma)=
\sum_{\sigma \in R_N^1} g\left(\frac{|\sigma|}{N^{3/4}}\right) \exp\left( -\frac{1}{8}+p \tanh(\gamma)|\sigma|^2 + o(1)\right)
\end{eqnarray*}
with $o(1)$-term that is uniform over $R_N^1$. Note that we are in the regime where $p^4N^3\to \infty$ and
that on $R_N^1$ we have $|\sigma|^2 \le N^{3/2} h(Np)$. Thus
\begin{eqnarray*}
&&\sum_{\sigma \in R_N^1} g\left(\frac{|\sigma|}{N^{3/4}}\right) \exp\left( -\frac{1}{8}+p \tanh(\gamma)|\sigma|^2 + o(1)\right)\\
&&\quad =
\sum_{\sigma \in R_N^1} g\left(\frac{|\sigma|}{N^{3/4}}\right) \exp\left( -\frac{1}{8}+p \gamma|\sigma|^2 +
\mathcal{O}(p \gamma^3 |\sigma|^2)+ o(1)\right)\\
&&\quad =
\sum_{\sigma \in R_N^1} g\left(\frac{|\sigma|}{N^{3/4}}\right) \exp\left( -\frac{1}{8}+ \frac{1}{2N}|\sigma|^2
+ o(1)\right)
\end{eqnarray*}
by definition of $\gamma$, $h$, and $R_N^1$.

We will first show that this sum over $R_N^1$ is the dominant term, i.e.\   we claim that
\begin{equation}\label{eq:claim_untypicalcrit}
\sum_{\sigma \not\in R_N^1} g\left(\frac{|\sigma|}{N^{3/4}}\right)\E T(\sigma)
=  o\left(N^{\frac 14 }2^N \right).
\end{equation}
Letting $\|g\|_\infty:= \sup_{t\in\R} |g(t)| < \infty$ we use Lemma~\ref{ETcrit} to obtain the estimate
\begin{align*}
\left|\sum_{\sigma \not\in R_N^1} g\left(\frac{|\sigma|}{N^{3/4}}\right)\E T(\sigma)\right|
&\leq
\|g\|_\infty \sum_{\sigma \not\in R_N^1}\E T(\sigma)\\
&\leq
\|g\|_\infty \sum_{\sigma \not\in R_N^1} \eee^{-\frac{1}8+ o(1)+ p \tanh(\gamma)|\sigma|^2}\\
& = \|g\|_\infty \sum_{k: k^2 \ge N^{3/2} h(Np)}\eee^{-\frac{1}8+ o(1)+ p \tanh(\gamma)k^2}\binom{N}{\frac{N+k}2}.
\end{align*}
On a scale $2^N N^{1/4}$, the terms with $k =\pm N$ do not contribute to the above sum because $ p \tanh(\gamma)k^2 \leq  \frac 12 N$ and $\eee^{1/2}<2$.
For the other terms, we will exploit the Stirling formula
$$
\log (n!)= n \log n - n +\frac 12 \log(2 \pi) + \frac 12 \log \left(n + \frac 12\right)+\mathcal{O}(1/n),
$$
for  $n\geq 1$. The term $\log (n+\frac 12)$ differs from the usual representation with $\log n$ by $\mathcal O(1/n)$ and is included to treat the boundary cases $k=\pm n$ below.  The Stirling formula yields the crude bound
\begin{equation}\label{eq:crude_bound}
c N^{-1/2} 2^N \eee^{-N I(\frac k N) - \lambda_N(k)} \leq \binom{N}{\frac{N+k}2} \le C N^{-1/2} 2^N \eee^{-N I(\frac k N) - \lambda_N(k)},
\qquad |k|\leq N,
\end{equation}
with
\begin{equation}\label{def:I}
I(x)=\frac{1-x}2\log (1-x)+\frac{1+x}2\log (1+x) \qquad \mbox{for } x\in [-1,1]
\end{equation}
and
$$
\lambda_N(k) =  \frac 12 \log \left(\frac{(N+1)^2 - k^2}{N^2}\right).
$$
We will apply the bound in the range $\frac {N^2} 4 \leq k^2$.
Note that $I$ is an even function and that the Taylor expansion of
$NI(k/N)$ is given by
\begin{equation}\label{eq:taylor_I}
NI(k/N)= \frac{k^2}{2N}+ \sum_{j\ge 2 }d_{2j}\frac{k^{2j}}{N^{2j-1}}.
\end{equation}
The $d_i$ are the Taylor coefficients of $I(x)$, i.e $d_i=0$ if $i$ is odd and $d_i=\frac{1}{i(i-1)}$ if $i$ is even.
Moreover, note that for $N^2 \geq k^2 \ge N^2/4$ one can estimate
\begin{equation}\label{eq:logterm}
-\lambda_N(k) = -\frac 12 \log \left(\frac{(N+1)^2 - k^2}{N^2}\right)
\leq -\frac 12 \log \frac{2N+1}{N^2} \leq  -\frac 12 \log \frac{N}{N^2} \leq \frac 12 \log N.
\end{equation}

By using these facts, the inequality $\tanh(\gamma) \le \gamma$,  and aborting the Taylor expansion in \eqref{eq:taylor_I} after the fourth order term we obtain
\begin{eqnarray}\label{eq:est_k_big}
\sum_{k: N^2\geq  k^2 \ge \frac{N^2}4}\eee^{-\frac{1}8+ o(1)+ p \tanh(\gamma)k^2}\binom{N}{\frac{N+k}2}
&\le& \frac{C}{\sqrt{N}} 2^N \sum_{k: N^2 \geq  k^2 \ge \frac{N^2}4}\eee^{-\frac 1 {12} \frac{k^4}{N^3}+\frac 12 \log N}\nonumber\\
\le C 2^N N e^{-\frac{1}{3072} N}
\end{eqnarray}
because the quadratic term in the expansion of $N I(k/N)$ cancels with $p\gamma k^2$, $\eee^{\frac 12 \log N}$ cancels with $\frac 1 {\sqrt N}$, and there are at most $(2N+1)$ values for $k$.
The right hand side of \eqref{eq:est_k_big} is clearly $o(2^N N^{1/4})$.

For the  values of $k$ such that $k^2 \le \frac{N^2}4$ again by the above expansion we have
\begin{equation}\label{eq:binomial_stirling}
\binom{N}{\frac{N+k}2} \le C \frac{2^N}{\sqrt N} \eee^{-NI(k/N)}
\end{equation}
for some constant $C>0$, because now $-\lambda_N(k)$ is bounded above by a constant.
Again using the Taylor expansion \eqref{eq:taylor_I}, aborting it after the fourth order term, and using
$\tanh(\gamma) \le \gamma$ we obtain
\begin{multline*}
\sum_{k: N^{3/2} h(Np) \le k^2 \le \frac{N^2}4}\eee^{-\frac{1}8+ o(1)+ p \tanh(\gamma)k^2}\binom{N}{\frac{N+k}2}
\le \frac{C}{\sqrt{N}} 2^N \sum_{k: N^{3/2} h(Np) \le k^2 \le \frac{N^2}4}\eee^{-\frac 1 {12} \frac{k^4}{N^3}}\\
= C N^{1/4} 2^N  \frac{1}{N^{3/4}}\sum_{k: N^{3/2} h(Np) \le k^2 \le \frac{N^2}4}\eee^{-\frac 1 {12} \frac{k^4}{N^3}}.
\end{multline*}
But $\frac{1}{N^{3/4}}\sum_{k: N^{3/2} h(Np) \le k^2 \le \frac{N^2}4}\eee^{-\frac 1 {12} \frac{k^4}{N^3}}$ is a Riemann sum. We bound it from above by the Riemann integral
$$
\int_{x^2 \ge \frac 12 h(Np)} \eee^{-\frac 1{12}x^4} dx.
$$
The integral $\int_{\R} \eee^{-\frac 1{12}x^4}dx$ is finite, the domain of integration $x^2 \ge \frac 12 h(Np)$ converges to the empty set and therefore the integral goes to 0.
Thus indeed,
\begin{align}\label{eq:thus_indeed}
\left|\sum_{\sigma \not\in R_N^1} g\left(\frac{|\sigma|}{N^{3/4}}\right)\E T(\sigma)\right|
=o(N^{1/4} 2^N).
\end{align}

To estimate the sum over $\sigma \in R_N^1$, we use the following exact asymptotics also following from the Stirling formula:
\begin{equation}\label{eq:Stirling_asympt}
\binom{N}{\frac{N+k}2} =  \frac{2^N}{\sqrt{2\pi N}} \eee^{-N I(\frac k N) + o(1)},
\qquad k^2 \le N^{3/2} h(Np),
\end{equation}
where the $o(1)$-term is uniform in the specified range. We used that on $R_N^1$ the $\lambda_N(k)$-term uniformly goes to $0$.
On the other hand,
\begin{align*}
& \sum_{\sigma \in R_N^1} g\left(\frac{|\sigma|}{N^{3/4}}\right) \exp\left( -\frac{1}{8}+ \frac{1}{2N}|\sigma|^2
+ o(1)\right)\\
&=
\sum_{k: k^2 \le N^{3/2} h(Np)}g\left(\frac{k}{N^{3/4}}\right)\exp\left( -\frac{1}{8}+ \frac{1}{2N}k^2
+ o(1)\right)\binom{N}{\frac{N+k}2}\\
&= \frac{2^N}{\sqrt{2\pi N}} \sum_{k: k^2 \le N^{3/2} h(Np)}g\left(\frac{k}{N^{3/4}}\right)\eee^{-\frac{1}8+ o(1)+ \frac{1}{2N}k^2}
\eee^{-N\left(\frac{k^2}{2N}+ \sum_{j\ge 2 }d_{2j}\frac{k^{2j}}{N^{2j}}\right)+o(1)}
\end{align*}
by the above identity for the binomial coefficient and the Taylor expansion of the function $I$.

Again, the $k^2/(2N)$-term cancels. Moreover, for $k^2 \le N^{3/2} h(Np)$ only the first summands of $N\sum_{j\ge 2 }d_{2j}\frac{k^{2j}}{N^{2j}}$ survives and thus, by computing $d_4$,
$$
-N\sum_{j\ge 2 }d_{2j}\frac{k^{2j}}{N^{2j}}= -\frac 1{12}\frac{k^4}{N^3}+o(1)
$$
with a $o(1)$-term that is uniform over all $k$ with $k^2\le N^{3/2}h(Np)$.
Hence,
\begin{align*}
& \sum_{\sigma \in R_N^1} g\left(\frac{|\sigma|}{N^{3/4}}\right) \exp\left( -\frac{1}{8}+ \frac{1}{2N}|\sigma|^2
+ o(1)\right)\\
&= \frac{2^N}{\sqrt{2\pi N}} \eee^{-\frac{1}8+ o(1)}\sum_{k: k^2 \le N^{3/2} h(Np)}g\left(\frac{k}{N^{3/4}}\right)
\eee^{-\frac{1}{12}\frac{k^4}{N^3}}\\
&= \frac{2^N N^{1/4}}{\sqrt{2\pi}\, N^{3/4}} \eee^{-\frac{1}8+ o(1)}\sum_{k: k^2 \le N^{3/2} h(Np)}g\left(\frac{k}{N^{3/4}}\right)
\eee^{-\frac{1}{12}\frac{k^4}{N^3}}.
\end{align*}
Now, similar to what we did above, we have the following convergence of the Riemann sum to the Riemann integral:
$$
\lim_{N\to\infty}\frac {1}{N^{3/4}}\sum_{k\in \Z}g\left(\frac{k}{N^{3/4}}\right)
\eee^{-\frac{1}{12}\frac{k^4}{N^3}}
= \int_{\R} g(x) \eee^{-\frac{1}{12} x^4} dx.
$$
This proves \ref{expectcrit:eins}.

\medskip
The proof of \ref{expectcrit:zwei} is similar to the one of \ref{expectcrit:eins}. Again,
\begin{equation*}
\E \tilde Z_N^1(\beta, g)
=
\sum_{\sigma \in R_N^1}g\left(\frac{|\sigma|}{N^{3/4}}\right)\E T(\sigma)
+
\sum_{\sigma \notin R_N^1} g\left(\frac{|\sigma|}{N^{3/4}}\right)\E T(\sigma).
\end{equation*}
By Lemma~\ref{ETcrit},
\begin{align*}
\sum_{\sigma \in R_N^1}g\left(\frac{|\sigma|}{N^{3/4}}\right)\E T(\sigma)&=
\sum_{\sigma \in R_N^1} g\left(\frac{|\sigma|}{N^{3/4}}\right) \exp\left( -\frac{1}{8}+p \tanh(\gamma)|\sigma|^2 + o(1)\right)\\
&=
\sum_{\sigma \in R_N^1} g\left(\frac{|\sigma|}{N^{3/4}}\right) \exp\left( -\frac{1}{8}+ \frac{1}{2N}|\sigma|^2-
\frac{1}{24N^3p^2}|\sigma|^2
+ o(1)\right)
\end{align*}
with a $o(1)$-term that is uniform over $R_N^1$.

Again $\sum_{\sigma \notin R_N^1} g\left(\frac{|\sigma|}{N^{3/4}}\right)\E T(\sigma)=o(N^{1/4}2^N)$. The proof follows almost verbatim the proof of the corresponding fact in \ref{expectcrit:eins}.

On the other hand, again using identity \eqref{eq:Stirling_asympt} for the binomial coefficient, and the Taylor approximation \eqref{eq:taylor_I} of $I$:
\begin{align*}
&\sum_{\sigma \in R_N^1} g\left(\frac{|\sigma|}{N^{3/4}}\right) \exp\left( -\frac{1}{8}+ \frac{1}{2N}|\sigma|^2-
\frac{1}{24N^3p^2}|\sigma|^2
+ o(1)\right)\\
&=
\sum_{k: k^2 \le N^{3/2} h(Np)}g\left(\frac{k}{N^{3/4}}\right)\exp\left( -\frac{1}{8}+ \frac{1}{2N}k^2-\frac{1}{24N^3p^2}k^2
+ o(1)\right)\binom{N}{\frac{N+k}2}\\
&= \frac{2^N}{\sqrt {2\pi N}} \sum_{k: k^2 \le N^{3/2} h(Np)}g\left(\frac{k}{N^{3/4}}\right)\eee^{-\frac{1}8+ o(1)+ \frac{1}{2N}k^2-\frac{1}{24N^3p^2}k^2}
\eee^{-N\left(\frac{k^2}{2N^2}+ \sum_{j\ge 2 }d_{2j}\frac{k^{2j}}{N^{2j}}\right)+ o(1)}.
\end{align*}
Again, the $k^2/(2N)$-term cancels. Keeping just those terms that are not $o(1)$ when $k^2 \le N^{3/2} h(Np)$ we are left with
\begin{align*}
&\sum_{\sigma \in R_N^1} g\left(\frac{|\sigma|}{N^{3/4}}\right) \exp\left( -\frac{1}{8}+ \frac{1}{2N}|\sigma|^2-
\frac{1}{24N^3p^2}|\sigma|^2
+ o(1)\right) \\
&\quad = \frac{2^N N^{1/4}}{\sqrt{2\pi} N^{3/4}}\eee^{-\frac 18 +o(1)} \sum_{k: k^2 \le N^{3/2} h(Np)} g\left(\frac{|\sigma|}{N^{3/4}}\right)\eee^{-\frac{1}{24N^3p^2}k^2-
\frac 1{12}\frac {k^4}{N^3}}.
\end{align*}
Now, we have the assumption $N^3p^4 \to 1/c^2 \in (0, \infty)$, and therefore
$$
\lim_{N\to\infty}
\frac{1}{N^{3/4}}\sum_{k: k^2 \le N^{3/2} h(Np)} g\left(\frac{|\sigma|}{N^{3/4}}\right) \eee^{-\frac{1}{24N^3p^2}k^2-
\frac 1{12}\frac {k^4}{N^3}}
=
\int_\R e^{-\frac c{24} x^2-\frac 1 {12}x^4} g(x) dx.
$$
This proves \ref{expectcrit:zwei}.

\medskip
To show \ref{expectcrit:drei} we will use the set $R_N^2$ and decompose the sum of interest as follows
\begin{eqnarray}\label{eq:expect_typical_atypical_crit_drei}
\E \tilde Z_N^3(\beta, g)&=&\sum_{\sigma \in \{-1, +1\}^N} g\left(\frac{|\sigma|}{\sqrt{ N^3p^2}}\right)\E T(\sigma)\nonumber \\
&=&
\sum_{\sigma \in R_N^2}g\left(\frac{|\sigma|}{\sqrt{ N^3p^2}}\right)\E T(\sigma)
+
\sum_{\sigma \notin R_N^2} g\left(\frac{|\sigma|}{\sqrt{ N^3p^2}}\right)\E T(\sigma).
\end{eqnarray}
For the first summand on the right hand side notice that
\begin{align*}
&\sum_{\sigma \in R_N^2}g\left(\frac{|\sigma|}{\sqrt{ N^3p^2}}\right)\E T(\sigma)=
\sum_{\sigma \in R_N^2}g\left(\frac{|\sigma|}{\sqrt{ N^3p^2}}\right)\exp\left( -\frac{1}{8}+p \tanh(\gamma)|\sigma|^2 + o(1)\right)\\
&\quad=
\sum_{\sigma \in R_N^2}g\left(\frac{|\sigma|}{\sqrt{ N^3p^2}}\right) \exp\left( -\frac{1}{8}+ \frac{1}{2N}|\sigma|^2-
\frac{1}{24N^3p^2}|\sigma|^2
+ o(1)\right)
\end{align*}
with a $o(1)$-term that is uniform over $R_N^2$. Indeed, expanding $\tanh(\gamma)$ we see that the terms
$\frac{1}{2N}|\sigma|^2$ and $-\frac{1}{24N^3p^2}|\sigma|^2$ correspond to the first and second non-vanishing term of this expansion. All higher order terms are of the form $c_m p \gamma^m |\sigma|^2$ for some $m \ge 5$. But for $\sigma \in R_N^2$ these are bounded by $c_m pN (Np)^{2} \frac{h'(Np)}{2^m (pN)^m} \le c_m \frac{h'(Np)}{(pN)^{m-3}}$ which goes to 0 by assumption on $h'$.

For the second sum on the right hand side in \eqref{eq:expect_typical_atypical_crit_drei} we apply the following bound
\begin{align*}
&\sum_{\sigma \notin R_N^2} g\left(\frac{|\sigma|}{\sqrt{ N^3p^2}}\right)\E T(\sigma)\\
&\qquad \le
c \frac{2^N}{\sqrt N} \|g\|_\infty \sum_{k: k^2 \ge N (Np)^{2}h'(Np)}\eee^{p \tanh(\gamma)k^2-NI(k/N)-\lambda_N(k)}
\end{align*}
for some constant $c>0$
by the same considerations as before. 
We split the sum on the right hand side into two parts
\begin{multline}\label{eq:split}
\sum_{k: N^2\geq  k^2 \ge N (Np)^{2}h'(Np)}\eee^{p \tanh(\gamma)k^2-NI(k/N)-\lambda_N(k)}\\=
\sum_{k: \frac 14 N^2 > k^2 \ge N (Np)^{2}h'(Np)}\eee^{p \tanh(\gamma)k^2-NI(k/N)-\lambda_N(k)}+
\sum_{k: N^2\geq k^2 \ge \frac 14 N^2}\eee^{p \tanh(\gamma)k^2-NI(k/N)-\lambda_N(k)}.
\end{multline}
Using the very same technique as in the first part of this proof we see that
$$
\frac{2^N}{\sqrt N} \|g\|_\infty\sum_{k: N^2>k^2 \ge \frac 14 N^2}\eee^{p \tanh(\gamma)k^2-NI(k/N)-\lambda_N(k)}=o(2^N Np).
$$

Turning to the first sum on the right hand side of \eqref{eq:split} we again bound $\lambda_N(k)$ by a constant. Moreover, we expand
$\tanh(\gamma)=\sum_{j=0}^\infty c_{2j+1}\gamma^{2j+1}$ and $I$ as in \eqref{eq:taylor_I}. The quadratic terms of these expansions cancel and we are left with estimating
$$
c \frac{2^N}{\sqrt N} \|g\|_\infty \sum_{\substack{k:\\ \frac 14 N^2 \ge k^2 \ge N(Np)^{2}h'(Np)}}\eee^{p \sum_{j=1}^\infty c_{2j+1}\gamma^{2j+1} k^2-\sum_{j=2}^\infty d_{2j} k^{2j}{N^{2j-1}}}.
$$
Here the $c_j$ are the Taylor coefficients of $\tanh$, while the $d_j$ are the Taylor coefficients of $I$, as above.
Now, because the Taylor expansion of $\tanh$ is oscillating around its limit,
$$
p \sum_{j=1}^\infty c_{2j+1}\gamma^{2j+1} k^2 \le \left(-\frac 13 \gamma^3+\frac{2}{15} \gamma^5\right)pk^2\le -\frac 1{15} \gamma^3 p k^2,
$$
since $\gamma \le 1$. On the other hand
$$
-\frac 1{15} \gamma^3 p k^2\le -\frac 1{120} h'(Np)
$$
for $k$ with $k^2\ge N (Np)^{2}h'(Np)$. We have 
\begin{multline*}
\sum_{k:\atop {\frac 14 N^2 \ge k^2 \ge N (Np)^{2}h'(Np)}}\eee^{p \tanh(\gamma)k^2-NI(k/N)}
\leq
\sum_{k: \atop \frac 14 N^2 \ge  k^2 \ge N (Np)^{2}h'(Np)} \eee^{-\frac 1{120} h'(Np)-\frac{k^4}{12 N^3}}
\\\leq
C N^{3/4} \eee^{-\frac 1{120} h'(Np)}
=
o(\sqrt N (Np))
\end{multline*}
by a Riemann sum argument and our assumptions on $h'$.
Hence
$$
\frac{2^N}{\sqrt N} \|g\|_\infty \sum_{k: \frac  14 {N^2}  > k^2 \ge N (Np)^{2}h'(Np)}\eee^{p \tanh(\gamma)k^2-NI(k/N)-\lambda_N(k)}=o(Np 2^N),
$$
which proves that 
\begin{equation}\label{eq:thus_indeed2}
\sum_{\sigma \notin R_N^2} g\left(\frac{|\sigma|}{\sqrt{ N^3p^2}}\right)\E T(\sigma) = o(Np 2^N),
\end{equation}
thus estimating the second summand in~\eqref{eq:expect_typical_atypical_crit_drei}.

Concentrating on the first summand in \eqref{eq:expect_typical_atypical_crit_drei} notice that
\begin{align*}
&\sum_{\sigma \in R_N^2}g\left(\frac{|\sigma|}{\sqrt{ N^3p^2}}\right) \exp\left( -\frac{1}{8}+ \frac{1}{2N}|\sigma|^2-
\frac{1}{24N^3p^2}|\sigma|^2
+ o(1)\right)\\
&=
\sum_{k: k^2 \le N (Np)^{2}h'(Np)}g\left(\frac{k}{\sqrt{ N^3p^2}}\right) \exp\left( -\frac{1}{8}+ \frac{1}{2N}k^2-
\frac{1}{24N^3p^2}k^2
+ o(1)\right)\binom{N}{\frac{N+k}2}\\
&=\frac{2^N \eee^{-1/8+o(1)}}{\sqrt{2 \pi N}}\sum_{k:\atop k^2 \le N (Np)^{2}h'(Np)}g\left(\frac{k}{\sqrt{ N^3p^2}}\right) \eee^{\frac{1}{2N}k^2-
\frac{1}{24N^3p^2}k^2-NI(\frac k N)}\\
&=\frac{2^N \eee^{-1/8+o(1)}}{\sqrt{2 \pi N}}\sum_{k:\atop k^2 \le N (Np)^{2}h'(Np)}g\left(\frac{k}{\sqrt{ N^3p^2}}\right) \eee^{-\frac{1}{24N^3p^2}k^2}
\end{align*}
again by \eqref{eq:Stirling_asympt} and the Taylor expansion of $I$ given in \eqref{eq:taylor_I}. Note that the $o(1)$-term is uniform in $k$ with $k^2 \le N (Np)^{2}h'(Np)$ by assumption on $h'$.

Now,
$$
\lim_{N \to\infty} \frac 1{\sqrt{ N^3p^2} }\sum_{k=-N}^N g\left(\frac{k}{\sqrt{ N^3p^2}}\right) \eee^{-\frac{1}{24N^3p^2}k^2}=
\int_{-\infty}^{\infty}g(x) \eee^{-\frac 1{24} x^2} dx.
$$
This shows the assertion.
\end{proof}

As a final step, we need to control the variances of $\tilde Z_N^1(1,g)$ and $\tilde Z_N^3(1,g)$.

\begin{proposition}\label{prop:var_crit}
At $\beta=1$ and for all $g \in \mathcal{C}^b(\R), g \ge 0, g \not\equiv 0$ we have
\begin{enumerate}[label=(\alph*)]
\item \label{var1}
If $p^4N^3\to \infty$, then
$$
\V \tilde Z_N^1(\beta,g) = o((\E \tilde Z_N^1(\beta,g))^2).
$$
\item  \label{var2}
If $p^4N^3\to c \in (0, \infty)$, then
$$
\V \tilde Z_N^1(\beta,g) = o((\E \tilde Z_N^1(\beta,g))^2).
$$
\item  \label{var3}
If $p^4N^3\to 0$ and $pN\to \infty$, then
$$
\V \tilde Z_N^3(\beta,g) = o((\E \tilde Z_N^3(\beta,g))^2).
$$
\end{enumerate}
\end{proposition}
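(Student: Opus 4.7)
The plan is to follow the template of Proposition \ref{prop:var}, with the set $S_N$ replaced by $R_N^1$ in parts (a), (b) and by $R_N^2$ in part (c). The crucial new ingredient is that at $\beta=1$ we no longer have the Gaussian decay $\eee^{-\delta k^2/(2N)}$ that Section \ref{sec:3} relied on: the quadratic term in the Taylor expansion $NI(k/N)=\tfrac{k^2}{2N}+\tfrac{k^4}{12N^3}+\ldots$ is exactly cancelled by the leading contribution $p\gamma=\tfrac{1}{2N}$ of $p\tanh(\gamma)$, so the decay must be extracted from two higher-order terms, namely the quartic piece $-k^4/(12N^3)$ and, when $p^4N^3$ does not blow up, the quadratic sub-leading piece $-k^2/(24N^3p^2)$ of $p\tanh(\gamma)$.

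First I would split the variance into a typical part over $R_N^i$ and an atypical remainder. For the typical part, Lemmas \ref{ETcrit} and \ref{ETsTtcrit} give
\[
\Cov(T(\sigma),T(\tau))=\E T(\sigma)\,\E T(\tau)\bigl[\eee^{p(1-p)\tanh^2(\gamma)|\sigma\tau|^2}-1+o(1)\bigr],
\]
and since $p(1-p)\tanh^2(\gamma)\leq p\gamma^2=\tfrac{1}{4N^2p}$ and $|\sigma\tau|^2\leq N(Np)^{1/5}$ on both $R_N^1$ and $R_N^2$, the bracket is $O((Np)^{-4/5})+o(1)=o(1)$ uniformly. Consequently, the typical contribution is at most $o(1)\cdot\bigl(\sum_{\sigma,\tau}g(\cdot)g(\cdot)\E T(\sigma)\,\E T(\tau)\bigr)=o((\E\tilde Z_N^i(1,g))^2)$.

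For the atypical part, I combine $|\Cov(T(\sigma),T(\tau))|\leq\E[T(\sigma)T(\tau)]+\E T(\sigma)\,\E T(\tau)$, Lemmas \ref{ETcrit}--\ref{ETsTtcrit}, and the three-dimensional local CLT bound $\nu_N(k,l,m)\leq C\,2^{2N}N^{-3/2}\eee^{-(k^2+l^2+m^2)/(2N)}$. After the quadratic terms in $NI$ cancel with the leading $\tfrac{1}{2N}(k^2+l^2)$ from $p\tanh(\gamma)$, the $(k,l,m)$-summand decays like
\[
\eee^{-(k^4+l^4)/(12N^3)\,-\,(k^2+l^2)/(24N^3p^2)\,-\,c\,m^2/N},
\]
where the coefficient of $m^2$ is positive since $Np\to\infty$. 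A Riemann sum comparison then bounds the atypical sum by an integral over a domain that shrinks to the empty set as $N\to\infty$ in each of the three relevant rescalings ($k\mapsto k/N^{3/4}$ in parts (a), (b); $k\mapsto k/\sqrt{N^3p^2}$ in part (c)). Matching against the lower bound on $(\E\tilde Z_N^i(1,g))^2$ provided by Proposition \ref{prop:expectcrit} yields the claim in each case.

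The main obstacle is part (c), where the normalization $\sqrt{N^3p^2}$ is much larger than $N^{3/4}$ and even the quartic term $-k^4/(12N^3)$ is itself $o(1)$ on $R_N^2$, so the necessary decay has to come exclusively from $-k^2/(24N^3p^2)$. All three technical conditions on $h'$ in the definition of $R_N^2$ then enter a delicate balance: $h'$ must grow fast enough that $R_N^2$ captures the bulk of $\E\tilde Z_N^3(1,g)$, slowly enough that the higher-order remainders in the expansions of $p\tanh(\gamma)$, $NI(k/N)$, and $p(1-p)\tanh^2(\gamma)|\sigma\tau|^2$ are uniformly $o(1)$ on $R_N^2$, and slowly enough that $\eee^{-\frac{1}{120}h'(Np)}=o(N^{3/4}p)$; the last condition is precisely what makes the atypical sum of order $o(Np\,2^N)$ and matches the scale $\E\tilde Z_N^3(1,g)\sim 2^N Np$ identified in Proposition \ref{prop:expectcrit}(c).
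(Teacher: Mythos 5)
The central step of your atypical-contribution estimate does not go through. You invoke the three-dimensional local CLT bound
\[
\nu_N(k,l,m)\leq C\,2^{2N}N^{-3/2}\eee^{-\frac{k^2+l^2+m^2}{2N}},
\]
and then claim that ``after the quadratic terms in $NI$ cancel with the leading $\tfrac{1}{2N}(k^2+l^2)$ from $p\tanh(\gamma)$, the $(k,l,m)$-summand decays like $\eee^{-(k^4+l^4)/(12N^3)-(k^2+l^2)/(24N^3p^2)-c m^2/N}$.'' But the LCLT bound only supplies Gaussian decay $\eee^{-(k^2+l^2+m^2)/(2N)}$; the quartic corrections $-k^4/(12N^3)$ and $-l^4/(12N^3)$ do not appear in it. At $\beta=1$ the Gibbs weight $\eee^{p\tanh(\gamma)(k^2+l^2)}\approx\eee^{(k^2+l^2)/(2N)}$ exactly annihilates the $k$- and $l$-parts of the LCLT bound, so what remains from that bound is only $\eee^{-\delta m^2/N}$, with \emph{no} decay in $k$ or $l$. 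Crudely, the atypical sum then scales like $N^{-3/2}\cdot N^2\cdot\sqrt{N}=N$, which is not $o(\sqrt N)=o\bigl(2^{-2N}(\E\tilde Z_N^1)^2\bigr)$, so the argument fails. This is precisely why the paper notes before this computation: ``Here we cannot simply apply the local limit theorem as in Section 3.''

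The paper replaces the 3D LCLT by an exact factorization
\[
2^{-2N}\nu_N(k,l,m)=\P(|\sigma|=k)\,\P(|\tau|=l)\,\P(|\sigma\tau|=m\mid|\sigma|=k,|\tau|=l),
\]
bounds the two marginals by the \emph{full} Stirling asymptotics $2^{-N}\binom{N}{(N+k)/2}\leq\eee^{-NI(k/N)}\leq\eee^{-k^2/(2N)-k^4/(12N^3)}$ (which does retain the quartic term), and handles the hypergeometric conditional law via Stirling plus a convexity argument for $I$, yielding the extra factor $\exp\bigl(-(m-\tfrac{kl}{N})^2/(8(N-\tfrac{k^2}{N}))\bigr)$. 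Only this refined decomposition produces the exponent you wrote down; the Gaussian-order LCLT alone cannot. The rest of your outline (the typical-part estimate, the cancellation bookkeeping, the role of $h'$ in part (c), and the Riemann-sum comparisons on shrinking domains) matches the paper, but without the hypergeometric/convexity step your argument has a genuine hole exactly at the point where Section~\ref{sec:3} and the critical case diverge.
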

\begin{proof}
The proof combines the ideas of the proofs of Proposition \ref{prop:var} and Lemma~\ref{prop:expectcrit}.
The three cases \ref{var1}, \ref{var2}, \ref{var3} are similar. We start with a detailed proof of \ref{var1}.

\vspace*{2mm}
\noindent
\textit{Proof of \ref{var1}.}
We write:
\begin{eqnarray}\label{eq:var_typical_atypical_crit_eins}
\V \tilde Z_N^1(\beta, g)
&=&
\sum_{(\sigma,\tau) \in R_N^1}g\left(\frac{|\sigma|}{N^{3/4}}\right)g\left(\frac{|\tau|}{N^{3/4}}\right)\mathrm{Cov}(T(\sigma),T(\tau))
\nonumber \\
&& \qquad+
\sum_{(\sigma,\tau) \notin R_N^1} g\left(\frac{|\sigma|}{N^{3/4}}\right)g\left(\frac{|\tau|}{N^{3/4}}\right)\mathrm{Cov}(T(\sigma),T(\tau)).
\end{eqnarray}
We will show that the sum over the typical pairs of configurations, that is over $(\sigma,\tau)\in R_N^1$, is $o((\E \tilde Z_N^1(\beta,g))^2)$. Indeed, by Lemmas \ref{ETcrit} and \ref{ETsTtcrit} and the fact that $g$ is non-negative we see that  $\mathrm{Cov}(T(\sigma)T(\tau))=o(\E(T(\sigma)\E(T(\tau))$ uniformly over $(\sigma,\tau)\in R_N^1$,  so that, just as in the proof of Proposition \ref{prop:var},
\begin{equation}\label{eq:critvar1}
\sum_{(\sigma,\tau) \in R_N^1}g\left(\frac{|\sigma|}{N^{3/4}}\right)g\left(\frac{|\tau|}{N^{3/4}}\right)\mathrm{Cov}(T(\sigma),T(\tau))=o((\E \tilde Z_N^1(\beta,g))^2).
\end{equation}

Let us now turn to the non-typical pairs, that is to summands with $(\sigma,\tau) \notin R_N^1$. Here we cannot simply apply the local limit theorem as in Section~\ref{sec:3}. Again, denote by $V_N(k,l,m)$ the set of pairs
$$(\sigma,\tau)\in \{-1,+1\}^N\times\{-1,+1\}^N \quad \mbox{for which }|\sigma|=k,|\tau|= l, \mbox{and }|\sigma \tau| = m$$
and set $\nu_N(k,l,m) := \# V_N(k,l,m)$.

We are thus left with analyzing spin configurations that are related to the set
\begin{multline*}
\overline {\mathfrak{R}_N^1}:=\{(k,l,m)\in (\Z\cap [-N,N])^3: \\
k^2 > N^{3/2} h(Np)\mbox{ or } l^2 > N^{3/2} h(Np)\mbox{ or } m^2 > N (Np)^{1/5}\}.
\end{multline*}
Then,
\begin{eqnarray*}
&&\left|\sum_{(\sigma,\tau) \notin R_N^1} g\left(\frac{|\sigma|}{N^{3/4}}\right)g\left(\frac{|\tau|}{N^{3/4}}\right)\mathrm{Cov}(T(\sigma),T(\tau))\right|\\&&
\le \|g\|_\infty^2 \sum_{(k,l,m) \in \overline{\mathfrak{R}_N^1}}\sum_{(\sigma,\tau) \in V_N(k,l,m)}
(\E [T(\sigma) T(\tau)] + \E [T(\sigma)]\E[T(\tau)]).
\end{eqnarray*}

In the following, we estimate the sum of the terms $\E [T(\sigma) T(\tau)]$. The sum of the terms $\E [T(\sigma)]\E[T(\tau)]$ has been already estimated in~\eqref{eq:thus_indeed}. Our aim is to show that
$$
\sum_{(k,l,m) \in \overline{\mathfrak{R}_N^1}}\sum_{(\sigma,\tau) \in V_N(k,l,m)}
\E [T(\sigma) T(\tau)] = o(2^{2N}\sqrt N).
$$
Observe that by Lemma \ref{ETsTtcrit} we have
$$
\sum_{(\sigma,\tau) \in V_N(k,l,m)} \E [T(\sigma) T(\tau)]=
\nu_N(k,l,m)\eee^{-\frac{1}{4}+ p \tanh(\gamma)(k^2+l^2)+p(1-p)\tanh^2(\gamma)m^2+o(1)}.
$$

Dividing $\nu_N(k,l,m)$ by $2^{2N}$ turns it into a probability mass function which can be written  in terms of a  conditional probability as follows:
\begin{eqnarray*}
2^{-2N} \nu_N(k,l,m)&=& \P_{\text{unif}}(|\sigma|=k, |\tau|=l, |\sigma\tau|=m)\\
&=&
 \P_{\text{unif}}(|\sigma \tau|=m \,\Big|\, |\sigma|=k, |\tau|=l)\P(|\sigma|=k)\P( |\tau|=l)\\
&=& 2^{-N}\binom{N}{\frac{N+k}2}2^{-N}\binom{N}{\frac{N+l}2}\P(|\sigma \tau|=m \,\Big|\, |\sigma|=k, |\tau|=l).
\end{eqnarray*}
Here, $\P_{\text{unif}}$ is a probability distribution under which $(\sigma,\tau)$ is uniformly distributed on $\{\pm 1\}^{N}\times \{\pm 1\}^{N}$. Using hypergeometric distribution, we compute
\begin{equation}\label{eq:condprob}
\P(|\sigma \tau|=m \,\Big|\, |\sigma|=k, |\tau|=l)=
\frac{\binom{\frac{N+k}2}{\frac{N+k+l+m}4}\binom{\frac{N-k}2}{\frac{N+l-k-m}4}}{\binom{N}{\frac{N+l}2}}.
\end{equation}
Using either
\eqref{eq:crude_bound} together with $\eee^{-\lambda_N(k)} \le \sqrt N$ or alternatively the exponential Markov inequality we see that
for all $|k|\leq N$ and $|l|\leq N$ we have
\begin{align*}
&2^{-N}\binom{N}{\frac{N+k}2}\leq \exp\left(-NI(k/N)\right)\le \exp\left(-\frac{k^2}{2N}-\frac{k^4}{12N^3}\right),\\
&2^{-N}\binom{N}{\frac{N+l}2}\leq  \exp\left(-NI(l/N)\right) \le \exp\left(-\frac{l^2}{2N}-\frac{l^4}{12N^3}\right).
\end{align*}
Here again $I$ is given by \eqref{def:I} and we used its Taylor expansion \eqref{eq:taylor_I} (which consists of positive terms, only) up to fourth order.

We will first treat the cases where either $|k|\geq \frac N3$ or $|l|\geq \frac N3$.
Employing our well-known estimates $m^2\leq N^2$, $p \tanh{\gamma} \le \frac {1}{2N}$ and $p(1-p) \tanh^2(\gamma) N^2 \le \frac 1p$ we see that
\begin{eqnarray}\label{eq:outer_region1}
&&\sum_{(k,l,m)\in \overline {\mathfrak{R}_N^1}: \atop k^2 \ge \frac {N^2}{9} \vee l^2 \ge \frac{N^2}{9}} \sum_{(\sigma,\tau) \in V_N(k,l,m)} \E [T(\sigma) T(\tau)]
\nonumber\\
&\le&
\sum_{(k,l): \atop N^2\geq k^2 \ge \frac {N^2}{9} \vee N^2\geq l^2 \ge \frac{N^2}{9}} 2^{2N} \exp\left(-N(I(k/N)+I(l/N))\right)\eee^{-\frac{1}{4}+ p \tanh(\gamma)(k^2+l^2)+p(1-p)\tanh^2(\gamma)N^2+o(1)}
\nonumber\\
&\le & C 2^{2N} N^2 \exp\left(-\frac{1}{10^4}N+\frac 1p\right)=o(2^{2N})
\end{eqnarray}
for all sequences $p= p(N)$ with $pN \to \infty$. Thus
$$
\sum_{(k,l,m) \in \overline {\mathfrak{R}_N^1}: \atop k^2 \ge \frac {N^2}{9} \vee l^2 \ge \frac{N^2}{9}} \sum_{(\sigma,\tau) \in V_N(k,l,m)} \E [T(\sigma) T(\tau)]
=
o(2^{2N}\sqrt N).
$$

We now proceed to the case where $|k|< \frac N3$ and  $|l|< \frac N3$.
Recall that~\eqref{eq:crude_bound} states that
$$
c M^{-1/2} 2^M \eee^{-M I(\frac n M) - \lambda_M(n)}\le \binom{M}{\frac{M+n}2} \le C M^{-1/2} 2^M \eee^{-M I(\frac n M) - \lambda_M(n)}
$$
(for $n \in \Z, M \in \N$ and $|n| \leq   M$).  Applying  this to  the binomial coefficients in \eqref{eq:condprob} and bounding the $\log$-correction in the exponent of the denominator by $0$ we obtain that 
\begin{multline*}
\P(|\sigma \tau|=m \,\Big|\, |\sigma|=k, |\tau|=l)\le C \sqrt{\frac{N}{(N+k)(N-k)}} \\
\times \eee^{-N\left(\frac{N+k}{2N}I(\frac{N+k+l+m}{2(N+k)})+\frac{N-k}{2N}I(\frac{N-k+l-m}{2(N-k)})-I(\frac{N+l}{2N})\right)-
\lambda_{N+k}(m+l) - \lambda_{N-k}(l-m)}
\end{multline*}
for some positive constant $C$, where the cases when $k^2=N^2$ or $l^2=N^2$ have been already excluded.

Since $|k|,|l| <  N$, the point $(N+l)/(2N)$ is a convex combination of $\frac{N+k+l+m}{2(N+k)}$ and $\frac{N-k+l-m}{2(N-k)}$ with weights $\frac{N+k}{2N}$ and $\frac{N-k}{2N}$, respectively. Moreover, not only $I$ itself is a convex function, but considering its Taylor expansion (as in \eqref{eq:taylor_I})
$I(x)= \sum_{j\ge 1 }d_{2j}x^{2j}$ with positive coefficients $d_{2j}$ we see that it is a positive linear combination of convex functions. Using that $d_2=\frac 12$ we obtain
\begin{eqnarray*}
&&-N\left(\frac{N+k}{2N}I\left(\frac{N+k+l+m}{2(N+k)}\right)+\frac{N-k}{2N}I\left(\frac{N-k+l-m}{2(N-k)}\right)-I\left(\frac{N+l}{2N}\right)\right)\\
&=&-N \left(\frac{N+k}{2N}\sum_{j=1}^\infty d_{2j} \left(\frac{N+k+l+m}{2(N+k)}\right)^{2j}+
\frac{N-k}{2N}\sum_{j=1}^\infty d_{2j}\left(\frac{N-k+l-m}{2(N-k)}\right)^{2j}\right.\\
&&\qquad \left.-\sum_{j=1}^\infty d_{2j} \left(\frac{N+l}{2N}\right)^{2j}\right)\\
&\le& -\frac N2 \left(\frac{N+k}{2N}\left(\frac{N+k+l+m}{2(N+k)}\right)^2+
\frac{N-k}{2N}\left(\frac{N-k+l-m}{2(N-k)}\right)^2-\left(\frac{N+l}{2N}\right)^2\right)\\
&=& -\frac{(m-\frac{kl}N)^2}{8(N-\frac{k^2}N)},
\end{eqnarray*}
where for the inequality we used that for each $j\geq 2$
$$
\frac{N+k}{2N}d_{2j} \left(\frac{N+k+l+m}{2(N+k)}\right)^{2j}+
\frac{N-k}{2N}d_{2j}\left(\frac{N-k+l-m}{2(N-k)}\right)^{2j}-d_{2j} \left(\frac{N+l}{2N}\right)^{2j}\ge 0.
$$
This amounts to saying that
\begin{multline*}
\P(|\sigma \tau |=m \,\Big|\, |\sigma|=k, |\tau|=l) \le C \sqrt{\frac{N}{(N+k)(N-k)}} \\
\times
\exp\left(-\frac{(m-\frac{kl}N)^2}{8(N-\frac{k^2}N)}
-\lambda_{N+k}(m+l)- \lambda_{N-k}(l-m)
\right).
\end{multline*}
Keeping in mind that we already eliminated the cases where $k^2$ or $l^2$ are larger than $N^2/9$ we only have to prove that
$$
2^{-2N} \sum_{(k,l,m) \in \overline{\mathfrak{R}_N^1}\atop k^2 \le \frac{N^2}{ 9}\wedge l^2 \le \frac{N^2}{ 9}}\sum_{(\sigma,\tau) \in V_N(k,l,m)}
\E [T(\sigma) T(\tau)] = o(\sqrt N).
$$
In view of our last estimates this term can be bounded as follows:
\begin{eqnarray}
&& 2^{-2N} \sum_{(k,l,m) \in \overline{\mathfrak{R}_N^1}\atop k^2 \le \frac{N^2}{ 9}\wedge l^2 \le \frac{N^2}{ 9}}\sum_{(\sigma,\tau) \in V_N(k,l,m)}
\E [T(\sigma) T(\tau)]\nonumber \\
&\le & C \frac 1 N \sum_{(k,l,m) \in \overline{\mathfrak{R}_N^1}\atop k^2 \le \frac{N^2}{ 9}\wedge l^2 \le \frac{N^2}{ 9}} \sqrt{\frac{N}{(N+k)(N-k)}} \eee^{-\frac{1}{4}+ p \tanh(\gamma)(k^2+l^2)+p(1-p)\tanh^2(\gamma)m^2+o(1)}\nonumber \\
&&\qquad \quad  \eee^{-N \left(I\left(\frac kN\right)+I\left(\frac l N\right)\right)-\frac{(m-\frac{kl}N)^2}{8(N-\frac{k^2}N)}
-
\lambda_{N+k}(m+l)- \lambda_{N-k}(l-m) - \lambda_N(k) - \lambda_N(l)
}\nonumber\\
&\le & C \sum_{(k,l,m) \in \overline{\mathfrak{R}_N^1}\atop k^2 \le \frac{N^2}{ 9}\wedge l^2 \le \frac{N^2}{ 9}} \frac 1 N\sqrt{\frac{N}{(N+k)(N-k)}}
\eee^{-\frac{k^4}{12 N^3}-\frac{l^4}{12 N^3}+p(1-p)\tanh^2(\gamma)m^2 -\frac{(m-\frac{kl}N)^2}{8(N-\frac{k^2}N)}}\nonumber\\
&&\quad \times \eee^{
-\lambda_{N+k}(m+l)- \lambda_{N-k}(l-m)},
\nonumber
\end{eqnarray}
because $\eee^{ - \lambda_N(k) - \lambda_N(l)}\le C$, when $k^2 \le \frac{N^2}{ 9}$ and $l^2 \le \frac{N^2}{ 9}$.
Our aim is to prove that the right-hand side is $o(\sqrt N)$.

We include the set of remaining triples $(k,l,m)$ in the union of two sets:
$$
\overline{\mathfrak{R}_N^1}\setminus \left\{(k,l,m): k^2 > \frac{N^2}9 \vee l^2 > \frac{N^2}{ 9}\right\}
\subseteq \overline{\mathfrak{R}_{N,1}^1} \cup \overline{\mathfrak{R}_{N,2}^1}
$$
with
{\footnotesize
\begin{align*}
\overline{\mathfrak{R}_{N,1}^1} &:=
\{(k,l,m)\in (\Z \cap [-N,N])^3: \frac{N^2}9 > k^2 > N^{3/2} h(Np)\mbox{ or } \frac{N^2}9 > l^2 > N^{3/2} h(Np)\}, \\
\overline{\mathfrak{R}_{N,2}^1} &:=
\{(k,l,m)\in (\Z \cap [-N,N])^3: k^2 \le  N^{3/2} h(Np) , l^2 \le   N^{3/2} h(Np) ,m^2 > N (Np)^{1/5}\}.
\end{align*}
}
First we consider the case when $(k,l,m) \in \overline{\mathfrak{R}_{N,1}^1}$ and $|m|\geq \frac N5$. We claim that
\begin{align}\label{eq:aussen}
\sum_{(k,l,m) \in \overline{\mathfrak{R}_{N,1}^1} \atop
|m| \ge \frac N5} & \frac 1 N\sqrt{\frac{N}{(N+k)(N-k)}}
\eee^{-\frac{k^4}{12 N^3}-\frac{l^4}{12 N^3}+p(1-p)\tanh^2(\gamma)m^2 -\frac{(m-\frac{kl}N)^2}{8(N-\frac{k^2}N)}}\nonumber\\
& \times  \eee^{
-\lambda_{N+k}(m+l)- \lambda_{N-k}(l-m)}\to 0.
\end{align}
Indeed, as in \eqref{eq:logterm} we bound
$
\eee^{-\lambda_{N+k}(m+l)- \lambda_{N-k}(l-m)}
\le \sqrt{(N+k)(N-k)}
$
such that we only need to show that
$$
 \sum_{(k,l,m) \in \overline{\mathfrak{R}_{N,1}^1}\atop |m| \ge \frac N5}
\frac {1}{\sqrt N}
\eee^{-\frac{k^4}{12 N^3}-\frac{l^4}{12 N^3}+p(1-p)\tanh^2(\gamma)m^2 -\frac{(m-\frac{kl}N)^2}{8(N-\frac{k^2}N)}}\to 0.
$$
For $|m| \ge \frac N5$ we can estimate  $\frac{(m-\frac{kl}N)^2}{8(N-\frac{k^2}N)}\ge cN $ for some constant $c>0$, and hence
\begin{align*}
\sum_{(k,l,m) \in \overline{\mathfrak{R}_{N,1}^1}\atop
|m| \ge \frac N5}&
\frac {1}{\sqrt N} \eee^{p(1-p)\tanh^2(\gamma)m^2 -\frac{(m-\frac{kl}N)^2}{8(N-\frac{k^2}N)}}\le
C N^{2+\frac 12} \eee^{-cN+p(1-p)\tanh^2(\gamma)N^2}
\end{align*}
because the number of terms is at most $N^3$  and $m^2 \le N^2$.
Now, the right-hand side goes to $0$ because  $\tanh^2(\gamma) \le \gamma^2$,  and $pN \to \infty$.

Now we consider the case when $(k,l,m) \in \overline{\mathfrak{R}_{N,1}^1}$ and $|m| \leq  \frac N5$.
Denote
$$
\overline{\mathfrak{R'}_{N,1}^1}:= \overline{\mathfrak{R}_{N,1}^1}
\cap \left\{(k,l,m)\in \Z^3 : |m| \leq  \frac N5\right\}.
$$
In this case,   we can estimate $\eee^{-\lambda_{N+k}(m+l)- \lambda_{N-k}(l-m)}
 \le C$
for another appropriate constant $C>0$. 
In order to treat the two terms involving $m$,
consider
$$
\eee^{p(1-p)\tanh^2(\gamma)m^2 -\frac{(m-\frac{kl}N)^2}{8(N-\frac{k^2}N)}}\le \eee^{p(1-p)\tanh^2(\gamma)m^2 -\frac{(m-\frac{kl}N)^2}{8N}}
=\eee^{\frac{8pN(1-p)\tanh^2(\gamma)- 1}{8N}m^2+\frac{2klm}{8N^2} -\frac{k^2l^2}{8N^3}}.
$$
Note that $pN(1-p)\tanh^2(\gamma) \to 0$. Thus, given $\vep>0$ we have $8pN(1-p)\tanh^2(\gamma)-1\le -1 +\vep$ for $N$ large enough.  Therefore,
$$
\eee^{p(1-p)\tanh^2(\gamma)m^2 -\frac{(m-\frac{kl}N)^2}{8(N-\frac{k^2}N)}}\le
\exp\left(-\frac{\left(\sqrt{1-\vep}m-\frac{1}{\sqrt{1-\vep}}\frac{kl}{N}\right)^2}{8N}+\frac {\vep}{1-\vep}\frac{k^2l^2}{8N^3}\right).
$$
Thus, if $\vep >0$ is small enough and we write $C_1:= \frac 1{12}-\frac{\vep}{16 (1-\vep)}>0$ and $C_2:=\frac{\vep}{2(1-\vep)}>0$, then
\begin{multline}\label{eq:techn1} 
\exp\left(-\frac{k^4}{12 N^3}-\frac{l^4}{12 N^3}+p(1-p)\tanh^2(\gamma)m^2 -\frac{(m-\frac{kl}N)^2}{8(N-\frac{k^2}N)}\right) \\
  \le
\exp\left(-C_1 \frac{k^4}{N^3}-C_1\frac{l^4}{N^3}-\frac{\left(\sqrt{1-\vep}m-\frac{1}{\sqrt{1-\vep}}\frac{kl}{N}\right)^2}{8N}
-\frac{C_2(k^2-l^2)^2}{8 N^3}\right). 
\end{multline}
Applying these estimates we arrive at
\begin{eqnarray}\label{eq:riemann_sum1}
&&\sum_{(k,l,m) \in \overline{\mathfrak{R'}_{N,1}^1}} \frac 1 N \sqrt{\frac{N}{(N+k)(N-k)}}
\eee^{-\frac{k^4}{12 N^3}-\frac{l^4}{12 N^3}+p(1-p)\tanh^2(\gamma)m^2 -\frac{(m-\frac{kl}N)^2}{8(N-\frac{k^2}N)}}\nonumber \\
&\le& C\sum_{k,l:k^2> N^{3/2} h(Np)\atop \mbox{{\tiny{or}} } l^2> N^{3/2} h(Np) } \frac 1 N  \exp\left(-C_1 \frac{k^4}{N^3}-C_1\frac{l^4}{N^3}\right)
\end{eqnarray}
where we used the fact that on $\overline{\mathfrak{R'}_{N,1}^1}$ one has $$\sqrt{\frac{N}{(N+k)(N-k)}}\le C_3 \frac 1{\sqrt N}$$
for some $C_3>0$ and that for
any $k,l$  we have that
$$\sum_{m\in\Z} \frac 1{\sqrt N} \eee^{\frac{\left(\sqrt{1-\vep}m-\frac{1}{\sqrt{1-\vep}}\frac{kl}{N}\right)^2}{8N}}\le C_4$$
for a constant $C_4>0$ that does not depend on $k$ and $l$.

But for $N$ sufficiently large we can bound the Riemann sum on the right-hand side of \eqref{eq:riemann_sum1} by a Riemann integral. More precisely,
\begin{equation}\label{eq:int_bound_1}
\frac 1 {\sqrt N}\sum_{k,l:k^2> N^{3/2} h(Np)\atop \mbox{{\tiny{or}} } l^2> N^{3/2} h(Np) } \frac 1 N  \exp\left(-C_1 \frac{k^4}{N^3}-C_1\frac{l^4}{N^3}\right)\le \int_{E^1_N} \eee^{-C_1 (x^4+y^4)} d x d y .
\end{equation}
where $E^1_N:=\{ (x,y) \in \R^2: x^2 \ge   \frac 12 h(Np)  \text{ or } y^2 \geq  \frac 12 h(Np)  \}$. Note that $E^1_N \downarrow \emptyset$ as $N \to\infty$, such that the right hand side in \eqref{eq:int_bound_1} goes to $0$. This shows that
$$
\sum_{(k,l,m) \in \overline{\mathfrak{R}_{N,1}^1}}\sum_{(\sigma,\tau) \in V_N(k,l,m)}
\E [T(\sigma) T(\tau)]=
o(2^{2N} \sqrt N).
$$

On the other hand, turning to $\overline{\mathfrak{R}_{N,2}^1}$ and applying the same estimates as above
\begin{eqnarray}\label{eq:int_bound_2}
&&\frac 1 {\sqrt N} \sum_{(k,l,m) \in \overline{\mathfrak{R}_{N,2}^1}} \frac 1 N \sqrt{\frac{N}{(N+k)(N-k)}}
\eee^{-\frac{k^4}{12 N^3}-\frac{l^4}{12 N^3}+p(1-p)\tanh^2(\gamma)m^2 -\frac{(m-\frac{kl}N)^2}{8(N-\frac{k^2}N)}}\nonumber \\
&\le & C\frac 1 {\sqrt N} \sum_{(k,l,m) \in \overline{\mathfrak{R}_{N,2}^1}} \frac 1 {N^{3/2}}
\eee^{-C_1 \frac{k^4}{N^3}-C_1\frac{l^4}{N^3}-\frac{\left(\sqrt{1-\vep}m-\frac{1}{\sqrt{1-\vep}}\frac{kl}{N}\right)^2}{8N}
-\frac{C_2(k^2-l^2)^2}{8 N^3}}\nonumber\\
&&\quad\le C \frac 1 {\sqrt N} \sum_{m:m^2 \ge N (Np)^{1/5}}
\eee^{-\frac{\left(\sqrt{1-\vep}m-\frac{1}{\sqrt{1-\vep}}\sqrt N h(Np)\right)^2}{8N}}
\label{eq:int_bound_2_1}
\end{eqnarray}
for some constant $C>0$ that may change from line to line. We have used the bound $\sqrt{\frac{N}{(N+k)(N-k)}} \le \frac C {\sqrt N}$ and the fact that (assuming for concreteness that $m>0$)
$$
\frac{1}{\sqrt{1-\vep}}\frac{kl}{N} \leq \frac{1}{\sqrt{1-\vep}}\sqrt N h(Np) \leq \sqrt{1-\vep} m \text{ because } (Np)^{\frac 1{10}}/h(Np)\to\infty.
$$
Finally, we observed that
$$
\sum_{(k,l)\in\Z^2} \frac 1 {N^{3/2}}
\eee^{-C_1 \frac{k^4}{N^3}-C_1\frac{l^4}{N^3}-\frac{C_2(k^2-l^2)^2}{8 N^3}}
$$
can be bounded by a finite integral and thus itself is bounded.

Hence we may bound the right hand side of \eqref{eq:int_bound_2_1}  by a constant times
$$\int_{E_N} \eee^{-\frac {x^2}{8}}d x,$$
where
$E_N:=\{ x \in \R: x^2 \ge   (\sqrt{1-\vep}(Np)^{1/10}-\frac{1}{\sqrt{1-\vep}}h(Np))^2\}.$
By the assumption on $h(\cdot)$ we see that $E_N \downarrow \emptyset $. This, together with
$$\int_{-\infty}^{+\infty} \eee^{-\frac {x^2}{8}}d x < \infty$$
implies
$$
\sum_{(k,l,m) \in \overline{\mathfrak{R}_{N,2}^1}}\sum_{(\sigma,\tau) \in V_N(k,l,m)}
\E [T(\sigma) T(\tau)]= o(2^{2N}\sqrt N).
$$
Hence,
$$
\|g\|_\infty^2 \sum_{(k,l,m) \in \overline{\mathfrak{R}_N^1}}\sum_{(\sigma,\tau) \in V_N(k,l,m)}
\E [T(\sigma) T(\tau)]= o((\E \tilde Z_N^1(\beta,g))^2).
$$
As remarked above,
$$
\|g\|_\infty^2 \sum_{(k,l,m) \in \overline{\mathfrak{R}_N^1}}\sum_{(\sigma,\tau) \in V_N(k,l,m)}
\E T(\sigma) \E T(\tau)= o((\E \tilde Z_N^1(\beta,g))^2).
$$
Combining this with \eqref{eq:critvar1} completes the proof of \ref{var1}.

\vspace*{2mm}
\noindent
\textit{Proof of \ref{var2}.}
The proof of \ref{var2} is basically the same as the one for \ref{var1}. Indeed, again we write
\begin{eqnarray}\label{eq:var_typical_atypical_crit_zwei}
\V \tilde Z_N^1(\beta, g)
&=&
\sum_{(\sigma,\tau) \in R_N^1}g\left(\frac{|\sigma|}{N^{3/4}}\right)g\left(\frac{|\sigma|}{N^{3/4}}\right)\mathrm{Cov}(T(\sigma),T(\tau))
\nonumber \\
&& \qquad+
\sum_{(\sigma,\tau) \notin R_N^1} g\left(\frac{|\sigma|}{N^{3/4}}\right)g\left(\frac{|\sigma|}{N^{3/4}}\right)\mathrm{Cov}(T(\sigma),T(\tau)).
\end{eqnarray}
As in (a), we can use Lemmas \ref{ETcrit} and \ref{ETsTtcrit} and to see that uniformly in $(\sigma, \tau) \in~R_N^1$ we have $\mathrm{Cov}(T(\sigma)T(\tau))=o(\E(T(\sigma)\E(T(\tau))$. As in the proof of Proposition \ref{prop:var}, we have
\begin{equation}\label{eq:critvar1_(b)_bbb}
\sum_{(\sigma,\tau) \in R_N^1}g\left(\frac{|\sigma|}{N^{3/4}}\right)g\left(\frac{|\tau|}{N^{3/4}}\right)\mathrm{Cov}(T(\sigma),T(\tau))=o((\E \tilde Z_N^1(\beta,g))^2).
\end{equation}
The contribution of the pairs $(\sigma,\tau) \notin R_N^1$ can be bounded exactly as in (a).

\vspace*{2mm}
\noindent
\textit{Proof of \ref{var3}.}
The proof of \ref{var3} is not the same, but similar to the proof of~\ref{var1}. In view of Proposition~\ref{prop:expectcrit}~\ref{expectcrit:drei}, it suffices to show that
$$
\V \tilde Z_N^3(\beta, g) = o(2^{2N}(Np)^2).
$$
We again split the variance -- this time of $\tilde Z_N^3(\beta,g)$  -- into two parts:
\begin{multline}\label{eq:var_typical_atypical_crit_drei}
\V \tilde Z_N^3(\beta, g)
=
\sum_{(\sigma,\tau) \in R_N^2}g\left(\frac{|\sigma|}{\sqrt{ N^3p^2}}\right)g\left(\frac{|\tau|}{\sqrt{ N^3p^2}}\right)\mathrm{Cov}(T(\sigma),T(\tau))
\\
+
\sum_{(\sigma,\tau) \notin R_N^2} g\left(\frac{|\sigma|}{\sqrt{ N^3p^2}}\right)g\left(\frac{|\tau|}{\sqrt{ N^3p^2}}\right)\mathrm{Cov}(T(\sigma),T(\tau)).
\end{multline}
As before, by Lemmas~\ref{ETcrit}, \ref{ETsTtcrit} and the definition of $R_N^2$ we have
$$
\sum_{(\sigma,\tau) \in R_N^2}g\left(\frac{|\sigma|}{\sqrt{ N^3p^2}}\right)g\left(\frac{|\tau|}{\sqrt{ N^3p^2}}\right)\mathrm{Cov}(T(\sigma),T(\tau))=o((\E \tilde Z_N^3(\beta,g))^2),
$$
which estimates the contribution of the typical pairs.

For non-typical pairs $(\sigma,\tau) \notin R_N^2$ we proceed similarly to what we did in \ref{var1} and \ref{var2}.
First of all, in the variance of  $\tilde Z_N^3(\beta, g)$ we can disregard the cases where $k^2 := |\sigma|^2 \ge \frac {N^2}{ 9}$ or $l^2:= |\tau|^2 \ge \frac {N^2}{ 9}$ by exactly the same argument as in \eqref{eq:outer_region1}.
Defining
{\footnotesize{
$$
\overline{\mathfrak{R}_N^2} :=
\{(k,l,m)\in (\Z\cap[-N,N])^3:  k^2 > N (Np)^2 h'(Np) \vee l^2 >N (Np)^2 h'(Np) \vee m^2 > N(Np)^{1/5}\}
$$}}
we have the inclusion
$$
\overline{\mathfrak{R}_N^2}\setminus \left\{(k,l,m): k^2 \ge \frac {N^2}{ 9} \vee l^2 \ge \frac {N^2}{ 9}\right\}
\subseteq
\overline{\mathfrak{R}_{N,1}^2} \cup \overline{\mathfrak{R}_{N,2}^2}
$$
with
{\footnotesize{
\begin{align*}
\overline{\mathfrak{R}_{N,1}^2}
&:=
\{(k,l,m)\in (\Z\cap[-N,N])^3: \frac{N^2}9 > k^2 >N (Np)^2 h'(Np) \vee\frac{N^2}9 >l^2 > N (Np)^2 h'(Np)\},\\
\overline{\mathfrak{R}_{N,2}^2}
&:=
\{(k,l,m)\in (\Z\cap[-N,N])^3: k^2 \le N (Np)^2 h'(Np) , l^2 \le  N (Np)^2 h'(Np) ,m^2 > N (Np)^{1/5}\}.
\end{align*}
}}
Proceeding as in \ref{var1}, we estimate the sum over the triples $(k,l,m)\in \overline{\mathfrak{R}_N^2}$ with $k^2 \le \frac{N^2} 9$ and  $l^2 \leq \frac{N^2} 9$ as follows:
\begin{eqnarray*}
&& 2^{-2N} \sum_{(k,l,m) \in \overline{\mathfrak{R}_N^2}\atop k^2 \le \frac{N^2} 9 \wedge l^2 \leq \frac{N^2} 9 }\sum_{(\sigma,\tau) \in V_N(k,l,m)}
\E [T(\sigma) T(\tau)]\nonumber \\
&\le & C \frac 1 N \sum_{(k,l,m) \in \overline{\mathfrak{R}_N^2}\atop k^2 \le \frac{N^2} 9 \wedge l^2 \leq  \frac{N^2} 9 } \sqrt{\frac{N}{(N+k)(N-k)}} \eee^{-\frac{1}{4}+ p \tanh(\gamma)(k^2+l^2)+p(1-p)\tanh^2(\gamma)m^2+o(1)}\\
&&\qquad \quad  \times \eee^{-N (I(\frac kN)+I(\frac l N))-\frac{(m-\frac{kl}N)^2}{8(N-\frac{k^2}N)}-\lambda_{N+k}(m+l)- \lambda_{N-k}(l-m)}\\
&\le & C \sum_{(k,l,m) \in \overline{\mathfrak{R}_N^2}\atop k^2 \le \frac{N^2} 9 \wedge l^2\leq \frac{N^2} 9} \frac 1 N\sqrt{\frac{N}{(N+k)(N-k)}}
\eee^{-\frac{k^4}{12 N^3}-\frac{l^4}{12 N^3}-\frac{\delta}{N^3p^2}(k^2+l^2)+p(1-p)\tanh^2(\gamma)m^2 -\frac{(m-\frac{kl}N)^2}{8(N-\frac{k^2}N)}}\\
&&\qquad \quad \times \eee^{
-\lambda_{N+k}(m+l)- \lambda_{N-k}(l-m)},
\end{eqnarray*}
where we used the inequality $p \tanh \gamma  \leq p\gamma - 8\delta p\gamma^3 = \frac 1 {2N} - \frac{\delta}{N^3 p^2}$ for some sufficiently small $\delta>0$. Arguing as in~\eqref{eq:techn1}, we arrive at
\begin{eqnarray*}
&& 2^{-2N} \sum_{(k,l,m) \in \overline{\mathfrak{R}_N^2}\atop k^2 \le \frac{N^2} 9 \wedge l^2 \leq \frac{N^2} 9 }\sum_{(\sigma,\tau) \in V_N(k,l,m)}
\E [T(\sigma) T(\tau)]\nonumber \\
&\le&
\sum_{(k,l,m) \in \overline{\mathfrak{R}_N^2}\atop k^2 \le \frac{N^2} 9 \wedge l^2\leq \frac{N^2} 9} \frac 1 N  \sqrt{\frac{N}{(N+k)(N-k)}} \eee^{-C_1 \frac{k^4}{N^3}-C_1\frac{l^4}{N^3}-\frac{\delta}{N^3p^2}(k^2+l^2)-\frac{\left(\sqrt{1-\vep}m-\frac{1}{\sqrt{1-\vep}}\frac{kl}{N}\right)^2}{8N}
-\frac{C_2(k^2-l^2)^2}{8 N^3}}\\
&&\qquad \quad\times \eee^{
-\lambda_{N+k}(m+l)- \lambda_{N-k}(l-m)}.
\end{eqnarray*}
Now we consider the terms with $(k,l,m)\in \overline{\mathfrak{R}_{N,1}^2}$  such that additionally $|m|\geq \frac N5$. The contribution of these terms, namely
\begin{eqnarray*}
&&\sum_{(k,l,m) \in \overline{\mathfrak{R}_{N,1}^2}\atop m^2 \ge \frac {N^2}{25} } \frac 1 N  \sqrt{\frac{N}{(N+k)(N-k)}} \eee^{-C_1 \frac{k^4}{N^3}-C_1\frac{l^4}{N^3}-\frac{\delta}{N^3p^2}(k^2+l^2)-\frac{\left(\sqrt{1-\vep}m-\frac{1}{\sqrt{1-\vep}}\frac{kl}{N}\right)^2}{8N}
-\frac{C_2(k^2-l^2)^2}{8 N^3}}\\
&&\qquad \quad\times \eee^{
-\lambda_{N+k}(m+l)- \lambda_{N-k}(l-m)},
\end{eqnarray*}
goes to $0$, because the argument given in~\eqref{eq:aussen} did not depend on the choice of $p$ as long as $pN\to\infty$.

We denote by $ \overline{\mathfrak{R'}_{N,1}^2}$ the set of those $(k,l,m) \in  \overline{\mathfrak{R}_{N,1}^2}$, for which  $m^2 \le \frac{N^2}{25}$. For those triples, again, $\sqrt{\frac{N}{(N+k)(N-k)}} \le \frac{C}{\sqrt N}$ as well as
$
\eee^{-\lambda_{N+k}(m+l)- \lambda_{N-k}(l-m)} \le C
$
for some constant $C$. Moreover, 
$$
\frac 1 {\sqrt N} \sum_{|m|\leq \frac N5} \eee^{-\frac{\left(\sqrt{1-\vep}m-\frac{1}{\sqrt{1-\vep}}\frac{kl}{N}\right)^2}{8N} } \le C
$$
for yet another constant $C>0$.

Thus
\begin{eqnarray}\label{eq:int_bound_100}
&&\sum_{(k,l,m) \in \overline{\mathfrak{R'}_{N,1}^2}} \frac 1 N  \sqrt{\frac{N}{(N+k)(N-k)}} \eee^{-C_1 \frac{k^4}{N^3}-C_1\frac{l^4}{N^3}-\frac{\delta}{N^3p^2}(k^2+l^2)-\frac{\left(\sqrt{1-\vep}m-\frac{1}{\sqrt{1-\vep}}\frac{kl}{N}\right)^2}{8N}
-\frac{C_2(k^2-l^2)^2}{8 N^3}}\nonumber\\
&&\qquad \quad\times\eee^{
-\lambda_{N+k}(m+l)- \lambda_{N-k}(l-m)}\\
\nonumber &\le&
C \sum_{k,l:k^2> N (Np)^2 h'(Np) \atop \mbox{{\tiny{or}} } l^2> N (Np)^2 h'(Np) } \frac 1 N  \eee^{-\frac{\delta}{N^3p^2}(k^2+l^2)}.
\end{eqnarray}
However, for an appropriate $C>0$
$$
\frac 1 {(Np)^2} \sum_{k,l:k^2> N (Np)^2 h'(Np) \atop \text{or } l^2> N (Np)^2 h'(Np) } \frac 1 N  \eee^{-\frac{\delta}{N^3p^2}(k^2+l^2)}
\le C \int_{E_N^3} \eee^{-\delta (x^2+y^2)}d x d y.
$$
Here
$
E_N^3:=\{ (x,y)\in \R^2: x > \sqrt{h'(Np)} \mbox{ or  } y > \sqrt{h'(Np)}\}.
$
By definition of $h'$ we see that $E^3_N \downarrow \emptyset$ as $N \to\infty$, such that the right-hand side in \eqref{eq:int_bound_100} goes to $0$.
In view of the third part of Proposition \ref{prop:expectcrit} this shows that
$$
\sum_{(k,l,m) \in \overline{\mathfrak{R}_{N,2}^2}}\sum_{(\sigma,\tau) \in V_N(k,l,m)}
\E [T(\sigma) T(\tau)]= o((\E \tilde Z_N^3(\beta,g))^2).
$$

It remains to estimate the contribution of the terms coming from $\overline{\mathfrak{R}_{N,2}^2}$. Similar to what we did in the proof of \ref{var1} we  see that
\begin{eqnarray}\label{eq:int_bound_101}
&&\frac 1 {(Np)^2} \sum_{(k,l,m) \in \overline{\mathfrak{R}_{N,2}^2}} \frac 1 N  \sqrt{\frac{N}{(N+k)(N-k)}} \eee^{-C_1 \frac{k^4}{N^3}-C_1\frac{l^4}{N^3}-\frac{\delta}{N^3p^2}(k^2+l^2)-\frac{\left(\sqrt{1-\vep}m-\frac{1}{\sqrt{1-\vep}}\frac{kl}{N}\right)^2}{8N}
-\frac{C_2(k^2-l^2)^2}{8 N^3}}\nonumber\\
&&\qquad \quad\times\eee^{
-\lambda_{N+k}(m+l)- \lambda_{N-k}(l-m)}\nonumber\\
\nonumber\\
&\le&
\frac C {(Np)^2} \sum_{(k,l,m) \in \overline{\mathfrak{R}_{N,2}^2}} \frac 1 N  \sqrt{\frac{1}{N}} \eee^{-\frac{\delta}{N^3p^2}(k^2+l^2)-\frac{\left(\sqrt{1-\vep}m-\frac{1}{\sqrt{1-\vep}}\frac{kl}{N}\right)^2}{8N}}
\nonumber\\
&\le& C \sum_{m: m^2 \ge N (Np)^{\frac 15}} \sqrt{\frac{1}{N}} \eee^{-\frac{\left(\sqrt{1-\vep}m-\frac{1}{\sqrt{1-\vep}}(Np)^2h'(Np)\right)^2}{8N}}
\nonumber \\
& \le &  C \int_{E_N^4} \exp(-x^2/8) d x
\end{eqnarray}
for some constant $C>0$, by ``integrating out'' the $k$ and $l$ variables and bounding the remaining sum by the Riemann integral on the right side over the set
$$
E_N^4:=\left\{ x\in \R: x^2 > \left(\sqrt{1-\vep}(Np)^{1/10}-\frac{1}{\sqrt{1-\vep}}\frac{(Np)^2}{\sqrt N}h'(Np)\right)^2\right\}.
$$
By definition of $h'$ we see that $E^4_N \downarrow \emptyset$ as $N \to\infty$, such that the right hand side in \eqref{eq:int_bound_101} goes to $0$.

In view of the third part of Proposition \ref{prop:expectcrit} this shows that
$$
\sum_{(k,l,m) \in \overline{\mathfrak{R}_{N,2}^2}}\sum_{(\sigma,\tau) \in V_N(k,l,m)}
\E [T(\sigma) T(\tau)]= o((\E \tilde Z_N^3(\beta,g))^2).
$$

Taking the estimates over $\overline{\mathfrak{R}_{N,1}^2}$ and $\overline{\mathfrak{R}_{N,2}^2}$ together we see that
$$
\|g\|_\infty^2 \sum_{(k,l,m) \in \overline{\mathfrak{R}_N^2}}\sum_{(\sigma,\tau) \in V_N(k,l,m)}
\E [T(\sigma) T(\tau)]= o((\E \tilde Z_N^3(\beta,g))^2).
$$
As already seen in~\eqref{eq:thus_indeed2},
$$
\|g\|_\infty^2 \sum_{(k,l,m) \in \overline{\mathfrak{R}_N^2}}\sum_{(\sigma,\tau) \in V_N(k,l,m)}
\E T(\sigma) \E T(\tau)= o((\E \tilde Z_N^3(\beta,g))^2).
$$
Altogether this implies
$$
\V \tilde Z_N^3(\beta, g)= o((\E \tilde Z_N^3(\beta,g))^2),
$$
as desired.
\end{proof}

\begin{proof}[Proof of Theorem \ref{theo:crit_temp_clt}]
Using the above Propositions~\ref{prop:expectcrit} and~\ref{prop:var_crit} the proof of Theorem \ref{theo:crit_temp_clt} follows the same line as the proof of
Theorem \ref{theo:high_temp_clt}.
\end{proof}

\end{document}